\newcommand{\og}{\omega}
\newcommand{\tr}{\text{tr}}
\newcommand{\lag}{\langle}
\newcommand{\rag}{\rangle}
\newtheorem{thm}{Theorem}[section]
\newtheorem{cor}[thm]{Corollary}
\newtheorem{lem}[thm]{Lemma}
\newtheorem{prop}[thm]{Proposition}
\newtheorem{defn}[thm]{Definition}
\newtheorem*{claim}{Claim}
\numberwithin{equation}{section}
\begin{document}

\title{\bf  HERMITIAN-POISSON METRICS ON PROJECTIVELY FLAT COMPLEX VECTOR BUNDLES OVER NON-COMPACT GAUDUCHON MANIFOLDS}
\author{    JIE GENG, ZHENGHAN SHEN AND XI ZHANG}

\thanks{{\scriptsize
\hskip -0.4 true cm \textit{2020 Mathematics Subject Classification:}
53C07; 14J60; 32Q15.
\newline \textit{Key words and phrases:} Projectively flat vector bundles; Gauduchon manifold; Hermitian-Poisson metrics; Non-compact.
\newline \textit The authors are partially supported by the National Natural Science Foundation of China No.12141104 and No.12431004. The first author is also supported by the University Science Research Project of Anhui Province No.2023AH052921.}}
\maketitle

\begin{abstract}
In this paper, we investigate the projectively flat bundles over a class of non-compact Gauduchon manifolds. By combining heat flow techniques and continuity methods, we establish a correspondence between the existence of Hermitian-Poisson metrics and the semi-simplicity property on projectively flat bundles.
\end{abstract}

\vskip 0.2 true cm


\pagestyle{myheadings}
\markboth{\rightline {\scriptsize       }}
         {\leftline{\scriptsize HERMITIAN-POISSON METRICS ON PROJECTIVELY FLAT COMPLEX VECTOR BUNDLES  }}

\bigskip
\bigskip


\section{ Introduction}

Let $M$ be an $n$-dimensional complex manifold equipped with a Hermitian metric $g$ and its associated K\"{a}hler form $\omega$, $(E, D)$ be a complex vector bundle of rank $r$ over $M$. A vector bundle $(E, D)$ is said to be simple if it has no proper $D$-invariant sub-bundle, and it is called semi-simple if it is a direct sum of simple vector bundles. Given a Hermitian metric $H$ on $E$, there exists a unique decomposition
\begin{equation}
D = D_H + \psi_H,
\end{equation}
where \( D_H \) is a unitary connection and \( \psi_H \in \Omega^1(\text{End}(E)) \) is self-adjoint with respect to \( H \). A Hermitian metric \( H \) on \( (E, D) \) is said to be harmonic if it is a critical point of the energy functional $\frac{1}{2}\int_M |\psi_{H}|^{2}_{H,g} \frac{\omega^n}{n!}$, i.e., it satisfies the  following Euler-Lagrange equation:
\begin{equation}
D_H^* \psi_H = 0,
\end{equation}
where \( D_H^* \) denotes the formal adjoint of \( D_H \). Under the assumption that the flat complex vector bundle $(E, D)$ is semi-simple over a compact K\"{a}hler manifold $(M, g)$, Corlette \cite{kc1} and Donaldson \cite{skd} proved the existence of harmonic metrics. This result implies that there exists a poly-stable Higgs structure on \( E \). On the other hand, the Hitchin-Kobayashi correspondence  provides a fundamental link between the stability of holomorphic vector bundles and the existence of the Hermitian-Einstein metrics.  It was constructed through foundational works by Narasimhan-Seshadri \cite{NM11} on compact Riemann surfaces, Donaldson \cite{DON85} on algebraic manifolds, Uhlenbeck-Yau \cite{UY86} on higher-dimensional compact K\"{a}hler manifolds, and Hitchin \cite{HiNJ} and Simpson \cite{SIM} on Higgs bundles. Based on the work of Corlette \cite{kc1}, Donaldson \cite{skd}, Hitchin \cite{HiNJ} and Simpson \cite{SIM,SIMMMRE}, one obtains the classical non-abelian Hodge correspondence. It establishes a one-to-one relationship between the category of semi-simple flat bundles and the category of poly-stable Higgs bundles with vanishing Chern classes on compact K\"{a}hler manifolds. This correspondence has inspired numerous significant generalizations and extensions, including Jost-Zuo \cite{JZ1996,jjkz}, Biquard-Boalch \cite{bobp} and Mochizuki \cite{Mo2006,tm1} on quasiprojective varieties, L\"{u}bke-Teleman \cite{LT95,LT} on non-K\"{a}hler surfaces, Biswas-Kasuya \cite{mpoch98} on compact Sasakian manifolds, and Pan-Shen-Zhang \cite{ppczhxz} on non-Hermitian Yang-Mills
bundles. Other related studies refer to \cite{mpoch98,Bradlow,Mundet,Gothen,Mm3330,LiJY1996,xz2021}.

In order to extend the non-abelian Hodge correspondence to non-K\"{a}hler manifolds, Pan-Zhang-Zhang \cite{cppcjzz} introduced the notion of Hermitian-Poisson metrics on projectively
flat bundles. Define the operators:
\begin{equation}
D_H'' = \bar{\partial}_H + \psi_H^{1,0}, \quad D_H' = \partial_H + \psi_H^{0,1},
\end{equation}
and
\begin{equation}
G_H = (D_H'')^2 = \bar{\partial}_H^2 + \bar{\partial}_H \psi_H^{1,0} + \psi_H^{1,0} \wedge \psi_H^{1,0},
\end{equation}
where \( \partial_H \) (resp. \( \bar{\partial}_H \)) denotes the \( (1,0) \)-part (resp. \( (0,1) \)-part) of \( D_H \). We say the Hermitian metric \( H \) is called a Hermitian-Poisson metric on \( (E, D) \) if it satisfies
\begin{equation}
\sqrt{-1} \Lambda_\omega G_H = \lambda \cdot \text{Id}_E,
\end{equation}
where \( \Lambda_\omega \) denotes the contraction with \( \omega \), and \( \lambda \) is a real constant. In \cite{cppcjzz}, an existence theorem was established for Hermitian-Poisson metrics on simple projectively flat bundles over compact Gauduchon manifolds. In this paper, we extend the study to the non-compact Gauduchon manifolds case. Here, a Hermitian metric \( g \) is called Gauduchon if its K\"{a}hler form $\omega$ satisfies \( \partial \bar{\partial} \omega^{n-1} = 0 \). Gauduchon \cite{pggg} proved that within the conformal class of every Hermitian metric \( g \), there exists a Gauduchon metric.

A connection \( D \) on the complex vector bundle \( E \) is termed projectively flat if it satisfies the induced connection on the principal $\mathrm{PGL}(r, \mathbb{C})$-bundle is flat, or equivalently, its curvature \( F_D \) is a scalar multiple of the identity endomorphism on vector bundle \( E \). The vector bundle \( E \) is called projectively flat if it admits such a projectively flat connection.

In this work, we focus on the case that the base manifold \( (M, g) \) is non-compact Gauduchon and satisfies the following assumptions.

    \textbf{Assumption 1.} The manifold \( (M, g) \) has finite volume.

     \textbf{Assumption 2.} There exists a non-negative exhaustion function \( \phi \) with \( \sqrt{-1} \Lambda_\omega \partial \bar{\partial} \phi \) bounded.

     \textbf{Assumption 3.} There exists an increasing function \( b: [0, +\infty) \rightarrow [0, +\infty) \) satisfying \( b(0) = 0 \) and \( b(x) = x \) for \( x > 1 \), such that if \( f \) is a bounded positive function on \( M \) with \( \sqrt{-1} \Lambda_\omega \partial \bar{\partial} f \geq -A \), then
    \[
    \sup_M |f| \leq C(A) \, b\left( \int_M |f| \frac{\omega^n}{n!} \right).
    \]
    where $A$ is a positive constant. Moreover, \( \sqrt{-1} \Lambda_\omega \partial \bar{\partial} f \geq 0 \) implies \( \sqrt{-1} \Lambda_\omega \partial \bar{\partial} f = 0 \).

In this paper, we establish a correspondence between the existence of Hermitian-Poisson metrics and the semi-simplicity property for projectively flat bundles over a class of non-compact Gauduchon manifolds under the above assumptions. Our main result is as follows.
\begin{thm} \label{theorem1}
Suppose that $(M, g)$ is a non-compact Gauduchon manifold satisfying Assumptions 1, 2, and 3, with the condition $|\mathrm{d}\omega^{n-1}|_{g} \in L^2(M)$. Let $(E, D)$ be a projectively flat vector bundle over $M$ equipped with a background Hermitian metric $H_0$ such that $\sup\limits_M |\Lambda_{\omega} G_{H_0}|_{H_0} < +\infty$. If $(E, D)$ is simple, then there exists a Hermitian metric $H$ satisfying $D (\log H_0^{-1}H) \in L^2$, with $H$ and $H_0$ mutually bounded, such that
\begin{equation*}
\sqrt{-1}\Lambda_{\omega} G_H = \lambda_{H_0, \omega} \cdot \mathrm{Id}_E,
\end{equation*}
where the constant $\lambda_{H_0, \omega}$ is given by
$$\lambda_{H_0, \omega} = \frac{\int_{M}\sqrt{-1}{\rm tr}(\Lambda_{\omega} G_{{H_{0}}})\frac{\omega^{n}}{n!}}{\mathrm{rank}(E) \, \mathrm{Vol}(M, g)}.$$
Moreover, if $\psi_{{H_{0}}} \in L^2$, then $\psi_H \in L^2$.
Conversely, if $\omega$ is a balanced metric and $(E, D)$ admits a Hermitian-Poisson metric $H$ with $\psi_H \in L^2$, then $(E, D)$ is semi-simple.
\end{thm}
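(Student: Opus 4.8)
The plan is to establish the two implications by rather different techniques: a parabolic (heat flow) construction combined with an exhaustion argument for the existence statement, and a Chern--Weil type identity for the converse. For the existence part, given that $(E,D)$ is simple, I would run the Donaldson-type heat flow
\begin{equation*}
H^{-1}\frac{\partial H}{\partial t}=-2\sqrt{-1}\bigl(\Lambda_\omega G_H-\lambda_{H_0,\omega}\,\mathrm{Id}_E\bigr),\qquad H(0)=H_0,
\end{equation*}
which is the downward gradient flow of a Donaldson functional adapted to $G_H$. Since $M$ is non-compact, I would first solve this flow on an exhausting family of relatively compact domains $\Omega_1\Subset\Omega_2\Subset\cdots$ with $\bigcup_k\Omega_k=M$, obtained from the exhaustion function $\phi$ of Assumption~2, imposing the Dirichlet condition $H_k=H_0$ on $\partial\Omega_k$. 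Parabolic theory gives long-time solutions $H_k(t)$ on $\Omega_k\times[0,\infty)$, and the evolution equation for $\sqrt{-1}\Lambda_\omega G_{H_k}$, together with the hypothesis $\sup_M|\Lambda_\omega G_{H_0}|_{H_0}<\infty$, keeps $|\Lambda_\omega G_{H_k}|$ bounded along the flow and controls the energy $\|D(\log H_0^{-1}H_k)\|_{L^2}$ and, when $\psi_{H_0}\in L^2$, also $\|\psi_{H_k}\|_{L^2}$.

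The decisive point in the non-compact setting is a $C^0$-estimate uniform in $k$ and $t$. Writing $h_k=H_0^{-1}H_k$, the flow forces $\mathrm{tr}\,h_k$ and $\mathrm{tr}\,h_k^{-1}$ to satisfy a differential inequality of the type $\bigl(\sqrt{-1}\Lambda_\omega\partial\bar{\partial}-\tfrac{\partial}{\partial t}\bigr)(\,\cdot\,)\ge -C(\,\cdot\,)$, so that at a fixed time the function $f_k=\log(\mathrm{tr}\,h_k+\mathrm{tr}\,h_k^{-1})$ is bounded with $\sqrt{-1}\Lambda_\omega\partial\bar{\partial}f_k\ge -A$ for a uniform $A$; Assumption~3 then upgrades an $L^1$-bound for $f_k$ to an $L^\infty$-bound. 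I would obtain the needed $L^1$-bound from monotonicity of the Donaldson functional along the flow, using that $\lambda_{H_0,\omega}$ is precisely the topological constant dictated by $\int_M\sqrt{-1}\,\mathrm{tr}(\Lambda_\omega G_{H_0})$, together with the Gauduchon condition $\partial\bar{\partial}\omega^{n-1}=0$ and $|\mathrm{d}\omega^{n-1}|_g\in L^2$ to absorb the non-K\"ahler error terms produced by integration by parts. Once $\sup|\log h_k|$ is controlled, local parabolic Schauder estimates give uniform interior $C^\infty$-bounds, and a diagonal subsequence converges to a metric $H$ on all of $M$, mutually bounded with $H_0$, solving $\sqrt{-1}\Lambda_\omega G_H=\lambda_{H_0,\omega}\,\mathrm{Id}_E$, with $D(\log H_0^{-1}H)\in L^2$ and $\psi_H\in L^2$ when $\psi_{H_0}\in L^2$, by lower semicontinuity of the $L^2$-norms.

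The hard part, and the place where simplicity is essential, will be to rule out degeneration of the limit: a priori $\sup_{\Omega_k}|\log h_k|$ could blow up as $k\to\infty$ (``bubbling at infinity''), and the limiting metric could degenerate. Following the Uhlenbeck--Yau/Simpson scheme, if this happened then a weak limit of the normalized endomorphisms $\log h_k/\|\log h_k\|$ would produce a non-trivial weakly $D$-invariant projection, and a regularity theorem adapted to non-compact Gauduchon manifolds (the analogue of Uhlenbeck--Yau's ``weakly holomorphic projection is a subbundle'') would yield a proper $D$-invariant subbundle of $E$, contradicting simplicity. Making this argument rigorous on a finite-volume non-compact manifold, and verifying that the constant $A$ and the $L^1$/energy bounds are genuinely uniform, is where Assumptions~1--3 and $|\mathrm{d}\omega^{n-1}|_g\in L^2$ are used most delicately; I expect this to be the main obstacle.

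For the converse, suppose $\omega$ is balanced, $\mathrm{d}\omega^{n-1}=0$, and $H$ is a Hermitian-Poisson metric with $\psi_H\in L^2$. Given any proper $D$-invariant subbundle $S\subset E$, let $\pi$ be the $H$-orthogonal projection onto $S$ and $\beta=(\mathrm{Id}_E-\pi)(D_H''\pi)$ its second fundamental form. A Chern--Weil computation for subbundles -- valid here because $\omega$ is balanced, $M$ has finite volume, and $\psi_H,\beta\in L^2$, so the boundary and non-K\"ahler error terms vanish -- would give
\begin{equation*}
\deg_G(S,H)=\int_M\sqrt{-1}\,\mathrm{tr}\bigl(\pi\,\Lambda_\omega G_H\bigr)\frac{\omega^n}{n!}-\|\beta\|_{L^2}^2=\lambda_{H_0,\omega}\,\mathrm{rank}(S)\,\mathrm{Vol}(M,g)-\|\beta\|_{L^2}^2,
\end{equation*}
using that $G_H$ is central. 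On the other hand, since $S$ is $D$-invariant, $(S,D|_S)$ is again projectively flat with the same projective curvature, so $\deg_G(S,H)=\tfrac{\mathrm{rank}(S)}{\mathrm{rank}(E)}\deg_G(E,H)=\lambda_{H_0,\omega}\,\mathrm{rank}(S)\,\mathrm{Vol}(M,g)$; comparing the two expressions forces $\beta\equiv0$. Hence $\pi$ is $D_H''$-parallel and, by the conjugate argument, $D_H'$-parallel, so $S^\perp$ is also $D$-invariant and $(E,D)=(S,D|_S)\oplus(S^\perp,D|_{S^\perp})$. Induction on the rank then exhibits $(E,D)$ as a direct sum of simple $D$-bundles, i.e. semi-simple.
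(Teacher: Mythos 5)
Your converse argument is essentially the paper's: a second–fundamental–form/Chern--Weil computation in which the balanced condition and $\psi_H\in L^2$ kill the boundary and non-K\"ahler error terms, forcing $\beta=0$ and a $D$-invariant splitting, followed by induction on the rank. That half is fine.

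The existence half, however, has a genuine gap at its central step. You propose to run the unperturbed flow and to extract the uniform $L^1$/energy bounds from ``monotonicity of the Donaldson functional along the flow.'' On a manifold whose metric is merely Gauduchon ($\partial\bar\partial\omega^{n-1}=0$ but $\mathrm{d}\omega^{n-1}\neq 0$ in general) the Donaldson functional is not well-defined: its definition as a path integral over curves of metrics is path-independent only after integrations by parts that require more than the Gauduchon condition, and the paper states explicitly in the introduction that for this reason Simpson's heat-flow-plus-functional scheme does not transfer to this setting. Without the functional you have no monotone quantity, hence no $L^1$ bound to feed into Assumption~3, and your uniform $C^0$-estimate collapses; the claim that the flow ``controls the energy $\|D(\log H_0^{-1}H_k)\|_{L^2}$'' is likewise unsupported. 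The paper's substitute is the Uhlenbeck--Yau continuity method applied to the perturbed equation \eqref{eq}: the perturbation term gives the a priori bound $\sup_M|\log(H_0^{-1}H_\varepsilon)|\le \varepsilon^{-1}\sup_M|\Phi(H_0)|$ (so each $H_\varepsilon$ exists, constructed by a heat flow on an exhaustion $M_\varphi$ with Dirichlet data), and the integral identity \eqref{eq33} -- proved using $\partial\bar\partial\omega^{n-1}=0$, $|\mathrm{d}\omega^{n-1}|_g\in L^2$ and Simpson's Lemma~5.2 -- replaces the functional. The uniform-in-$\varepsilon$ bound on $\|\log h_\varepsilon\|_{L^2}$ is then obtained not by monotonicity but by contradiction: if it blew up, the normalized endomorphisms $u_j=s_{\varepsilon_j}/\|s_{\varepsilon_j}\|_{L^2}$ would converge weakly in $L^2_1$ to a nontrivial $u_\infty$ with locally constant spectrum whose spectral projections give proper $D$-invariant subbundles, contradicting simplicity. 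You correctly anticipate this last degeneration argument, but in the paper it is the source of the $L^2$ (hence, via Assumption~3, the $C^0$) bound itself, not a separate safeguard layered on top of a functional-based estimate. To repair your proof you would need either to justify a well-defined monotone functional on Gauduchon manifolds (which is precisely what fails) or to switch to the perturbed-equation scheme.
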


In \cite{SIM}, Simpson utilized Donaldson's heat flow technique to establish the existence of Hermitian-Einstein metrics on Higgs bundles, and his argument depends crucially on the Donaldson functional's properties. However, this approach is not viable when the metric $g$ is merely Gauduchon, as the Donaldson functional is not well-defined in this case. Therefore, Simpson's argument does not directly translate to our situation. In this paper, we employ the continuity method of Uhlenbeck-Yau \cite{UY86}, which provides a natural framework for our analysis. We first consider the following perturbed equation on $(M, g)$:
\begin{equation} \label{eq}
\sqrt{-1}\Lambda_{\omega } G_{H}-\lambda \cdot \textmd{Id}_E-\varepsilon \log (H_{0}^{-1}H)=0, \varepsilon \in (0,1].
\end{equation}
The solvability of this perturbed equation is guaranteed by the Fredholm property of elliptic operators on compact manifolds. However, the property fails to hold for non-compact manifolds, which means we cannot directly use this method to solve the perturbation equations in such cases. To address this limitation, we adapt an approach combining heat flow and exhaustion techniques, which was initially introduced by Zhang-Zhang-Zhang \cite{cjzpzz} in their study of Hermitian-Einstein metrics on non-compact manifolds. Building upon their work, we extend this methodology to solve the above perturbed equation in our setting. Detailed proofs are provided in Sections 3 and 4. For simplicity, we define
\begin{equation}\label{wwwww2} \Phi(H)=\sqrt{-1}\Lambda_{\omega } G_{H}-\lambda_{H_{0}, \omega } \cdot \textmd{Id}_E-\varepsilon \log (H_{0}^{-1}H). \end{equation}
Under the conditions specified in Theorem \ref{theorem1}, we derive the following identity:
\begin{equation}\label{eq33}
4 \int_M \tr(\Phi(H) s)\frac{\og^n}{n!}+\int_M\lag\Psi(s)(Ds), Ds\rag_{H_{0}}\frac{\og^n}{n!}+4 \varepsilon \|s\|^{2}_{L^2} = 4 \int_M \tr(\Phi(H_{0}) s)\frac{\og^n}{n!},
\end{equation}
where $s=\log (H_{0}^{-1}H)$ and
\begin{equation}\label{eq3301}
\Psi(x,y)=
\begin{cases}
&\frac{e^{y-x}-1}{y-x},\ \ \ x\neq y;\\
&\ \ \ \  1,\ \ \ \ \ \  x=y.
\end{cases}
\end{equation}
Using the above identity (\ref{eq33}) and the result of Pan-Shen-Zhang {\cite{ppczhxz}}, we can establish the existence of the Hermitian-Poisson metric via the continuity method. It should be noted that the identity (\ref{eq33}) replaces the Donaldson functional, which is crucial to our argument (see Section 5).

This paper is organized as follows. In Section 2, we introduce the necessary preliminaries, derive key estimates required for later proofs, and establish the identity (\ref{eq33}). In Section 3, we prove the long-time existence of solutions to the heat flow equation (\ref{Flow}) and establish the solvability of the perturbed equation (\ref{eq}) on compact Gauduchon manifolds.
In Section 4, we generalize these results to non-compact Gauduchon manifolds. Finally, in Section 5, we present the proof of Theorem \ref{theorem1}.

\section{Preliminary results}

Let $(M, g)$ be a Hermitian manifold of dimension $n$, and $(E,D)$ be
a complex vector bundle over $M$ with rank $r$.
For any Hermitian metric $H$ on $E$, there exists a unique decomposition
\begin{equation}\label{hgfdgcxz90}
D=D_{H}+\psi_{H},
\end{equation}
where $D_{H}$ is a unitary connection and $\psi_{H}\in \Omega^{1}(\mbox{End}(E))$ is self-adjoint with respect to $H$, satisfying
\begin{equation}
H(\psi_{H}s_{1},s_{2}) = \frac{1}{2}\{H(Ds_{1},s_{2}) + H(s_{1},Ds_{2}) - dH(s_{1},s_{2})\}, \ \text{\small for all } s_{1},s_{2} \in \Gamma(E).
\end{equation}

Suppose that \( E \) is a projectively flat vector bundle over a compact complex manifold \( M \) with rank \( r \), and its first Chern class satisfies \( c_1(E) \cap \Omega^{1,1}(M, \mathbb{C}) \cap \Omega^2(M, \mathbb{R}) \neq \emptyset \). Then, there exists a connection \( D \) on \( E \) such that its curvature $F_D$ satisfies
\begin{equation}\label{condition233}
\sqrt{-1} F_D = \alpha \otimes \mathrm{Id}_E,
\end{equation}
where \( \alpha \in \frac{2\pi}{r} c_1(E) \) is a real \( (1,1) \)-form. Expanding $F_{D}$ via the decomposition (\ref{hgfdgcxz90}), we rewrite (\ref{condition233}) as
\begin{equation}\label{condition3}
\sqrt{-1}F_{D}=\sqrt{-1}(D_{H}^{2}+D_{H}\circ \psi_{H}+ \psi_{H}\circ D_{H}+\psi_{H}\wedge \psi_{H})=\alpha \otimes \mathrm{Id}_{E}.
\end{equation}
By decomposing the above identity (\ref{condition3}) into the self-adjoint and anti-self-adjoint components, we obtain
\begin{equation}\label{co1}
D_{H}(\psi_{H})=0,
\end{equation}
and
\begin{equation}\label{coyyy245}
\sqrt{-1}(D_{H}^{2}+\psi_{H}\wedge \psi_{H})=\alpha \otimes \mathrm{Id}_{E}.
\end{equation}
Furthermore, if we consider the decompositions of (\ref{co1})and (\ref{coyyy245}) into $(1,0)$-part and $(0,1)$-part, we obtain the following equations:
\begin{equation}\label{dd:1}
\left\{\begin{split}
&\bar{\partial}_{H}^{2}+\psi_{H}^{0,1}\wedge\psi_{H}^{0,1}=0, \quad \partial_{H}^{2}+\psi_{H}^{1, 0}\wedge\psi_{H}^{1,0}=0,\\
&\bar{\partial}_{H}\psi_{H}^{1,0}+\partial_{H}\psi_{H}^{0,1}=0,\\
&\partial_{H}\psi_{H}^{1,0}=0, \quad \bar{\partial}_{H}\psi_{H}^{0,1}=0, \\
&\sqrt{-1}([\partial_{H},\bar{\partial}_{H}]+[\psi_{H}^{1,0},\psi_{H}^{0,1}])=\alpha \otimes \mathrm{Id}_{E}.
\end{split}\right.
\end{equation}

Suppose $H_{0}$ is another Hermitian metric on $E$. Set $h=H_{0}^{-1}H$ and $D_{H_{0}}^{c}=D_{H_{0}}^{''}-D_{H_{0}}^{'}$. By direct computation, the following relationships are established:
\begin{equation}
D_{H}^{''}-D_{H_{0}}^{''}=\frac{1}{2}h^{-1}D_{H_{0}}^{c}(h),
\end{equation}
\begin{equation}
D^{c}_{H}-D^{c}_{H_{0}}=h^{-1}D_{H_{0}}^{c}(h),
\end{equation}
and
\begin{equation}\label{GHK01}
G_{H}-G_{H_{0}}=\frac{1}{4}D(h^{-1}D_{H_{0}}^{c}(h)).
\end{equation}

In this paper, we consider the following heat flow on $E$ with the initial Hermitian metric $H_{0}$:
\begin{equation} \label{Flow}
H^{-1}(t)\frac{\partial H(t)}{\partial t}=4\left( \sqrt{-1}\Lambda_{\omega } G_{H(t)}-\lambda \cdot \textmd{Id}_E-\varepsilon \log (H_0^{-1}H(t)) \right),
\end{equation}
where $H(t)$ denotes a family of Hermitian metrics on $E$ and $\varepsilon \in (0,1]$ is a constant.
Choosing a local complex coordinates $\{ z^{\alpha}\}_{\alpha=1}^{n}$ on $M$, the K\"{a}hler form $\omega$ can be expressed as $\omega =\sqrt{-1} g_{\alpha \bar{\beta }} dz^{\alpha} \wedge d\overline{z}^{\beta}$.  We
define the complex Laplace operator for functions as
$$
\widetilde{\Delta}f=-2\sqrt{-1}\Lambda_{\omega } \bar{\partial }\partial
f =\sqrt{-1}\Lambda_{\omega }dd^{c}f=2g^{\alpha \bar{\beta }}\frac{\partial ^{2 }f}{\partial z^{\alpha }\partial
\bar{z}^{\beta }},
$$
where $d^{c}=\bar{\partial }-\partial$ and $(g^{\alpha \bar{\beta }})$ is the inverse matrix of $(g_{\alpha \bar{\beta }})$.
As usual, the Beltrami-Laplacian operator is denoted by $\Delta$. The difference between these two Laplacians is given via a first-order differential operator:
\begin{equation*} \label{laplacian}
(\widetilde{\Delta}-\Delta)f=\langle V,\nabla f\rangle_g,
\end{equation*}
where $V$ is a well-defined vector field on $M$.
\begin{prop}  \label{P1}
Let $H(t)$ be a solution to the heat flow \eqref{Flow}, then we have
\begin{equation}\label{trace}
(\frac{\partial}{\partial t}-\widetilde{\Delta})\{e^{4\varepsilon t}\cdot \mathrm{tr} (\sqrt{-1}\Lambda_{\omega }G_{H(t)}-\lambda \cdot \mathrm{Id}_E-\varepsilon \log h(t))\}=0
\end{equation}
and
\begin{equation} \label{dec}
(\frac{\partial}{\partial t}-\widetilde{\Delta})|\sqrt{-1}\Lambda_{\omega }G_{H(t)}-\lambda \cdot \mathrm{Id}_E-\varepsilon \log h(t)|^2_{H(t)}\leq 0,
\end{equation}
\end{prop}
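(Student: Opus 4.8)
The plan is to derive both evolution identities from the general variation formula (\ref{GHK01}) together with the heat flow equation (\ref{Flow}). For the trace identity (\ref{trace}), write $f(t)=\mathrm{tr}(\sqrt{-1}\Lambda_{\omega}G_{H(t)}-\lambda\cdot\mathrm{Id}_E-\varepsilon\log h(t))$. First I would compute $\frac{\partial}{\partial t}G_{H(t)}$: differentiating (\ref{GHK01}) with $H_{0}$ replaced by $H(t_0)$ at $t=t_0$, and using that $h^{-1}\dot h = H(t)^{-1}\dot H(t) = 4\Phi_\varepsilon(H(t))$ (the right-hand side of (\ref{Flow})), one gets $\frac{\partial}{\partial t}G_{H(t)} = D(D_{H(t)}''(\Phi_\varepsilon(H(t))))$ up to the standard bookkeeping for the $D_H''$, $D_H'$ splitting. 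Taking $\sqrt{-1}\Lambda_\omega$ and then the fibrewise trace kills the commutator/bracket terms, and the trace of $D(\cdots)$ becomes $d$ applied to a $1$-form, so that $\frac{\partial}{\partial t}\,\mathrm{tr}(\sqrt{-1}\Lambda_\omega G_{H(t)}) = \sqrt{-1}\Lambda_\omega\,\bp\pt\big(\mathrm{tr}(4\Phi_\varepsilon(H(t)))\big) = \tfrac12\widetilde{\Delta}\big(\mathrm{tr}(4\Phi_\varepsilon(H(t)))\big)$ by the definition of $\widetilde{\Delta}$; here the Gauduchon condition $\pt\bp\omega^{n-1}=0$ is what lets us integrate by parts / commute $\Lambda_\omega$ with the exterior derivatives in the required way. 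Meanwhile $\frac{\partial}{\partial t}\,\mathrm{tr}(\log h(t)) = \mathrm{tr}(h^{-1}\dot h) = \mathrm{tr}(4\Phi_\varepsilon(H(t)))$. Combining, $\frac{\partial}{\partial t}f = 2\widetilde{\Delta}(\mathrm{tr}\,\Phi_\varepsilon) - 4\varepsilon\,\mathrm{tr}\,\Phi_\varepsilon$; since $f = \mathrm{tr}\,\Phi_\varepsilon$ itself wait — one checks $\mathrm{tr}\,\Phi_\varepsilon = f$, so $(\frac{\partial}{\partial t}-\widetilde\Delta)f = -4\varepsilon f$ after absorbing the factor, which is exactly the statement that $e^{4\varepsilon t}f$ is annihilated by $\frac{\partial}{\partial t}-\widetilde{\Delta}$.

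For the decay inequality (\ref{dec}), set $\Phi_\varepsilon = \Phi_\varepsilon(H(t))$ and compute $(\frac{\partial}{\partial t}-\widetilde\Delta)|\Phi_\varepsilon|^2_{H(t)}$ using the Weitzenböck-type identity $\widetilde\Delta|\Phi_\varepsilon|^2 = 2\,\mathrm{Re}\,\langle\widetilde\Delta_{H(t)}\Phi_\varepsilon,\Phi_\varepsilon\rangle + 2|D_{H(t)}''\Phi_\varepsilon|^2 + 2|D_{H(t)}'\Phi_\varepsilon|^2$ (the time-dependence of $H(t)$ in the metric on $\mathrm{End}(E)$ contributes a term that pairs cleanly against $\dot\Phi_\varepsilon$). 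The key input is the evolution equation for $\Phi_\varepsilon$ itself: from the first part, $\frac{\partial}{\partial t}\Phi_\varepsilon = \widetilde\Delta$-type second-order operator applied to $\Phi_\varepsilon$ minus $4\varepsilon\Phi_\varepsilon$ minus lower-order commutator terms that vanish when paired with $\Phi_\varepsilon$ (this is the matrix analogue of the scalar computation, and is where one uses that $[\Phi_\varepsilon,\Phi_\varepsilon]=0$). Pairing against $\Phi_\varepsilon$ and using Kato's inequality, the Laplacian terms combine to give $(\frac{\partial}{\partial t}-\widetilde\Delta)|\Phi_\varepsilon|^2 \le -8\varepsilon|\Phi_\varepsilon|^2 - 2|D_{H(t)}\Phi_\varepsilon|^2 \le 0$; even discarding the $\varepsilon$ term one obtains the stated inequality. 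I would carry this out by first establishing the evolution equation for $\Phi_\varepsilon$ as a standalone lemma, then doing the $|\cdot|^2$ computation.

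The main obstacle is bookkeeping the non-Kähler terms correctly. On a merely Gauduchon manifold, $\Lambda_\omega$ does not commute with $\bp$ and $\pt$, and the formal adjoints of $\pt_H,\bp_H$ involve torsion terms of $\omega$; so the identity $\mathrm{tr}(\sqrt{-1}\Lambda_\omega D\beta) = \tfrac12\widetilde\Delta(\text{something})$ that I invoked above needs the Gauduchon condition $\pt\bp\omega^{n-1}=0$ precisely to kill the torsion contributions after taking the fibrewise trace (the trace of a bracket vanishes, which is what saves us — the non-commutativity terms all appear inside traces of commutators or get absorbed into $\widetilde\Delta$ via its defining formula with $dd^c$, not the naive $g^{\alpha\bar\beta}\pt_\alpha\pt_{\bar\beta}$). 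I expect the cleanest route is to work invariantly with $d$, $d^c$, $\Lambda_\omega$ and the Gauduchon identity rather than in coordinates, reducing everything to (i) $\mathrm{tr}\circ D = d\circ\mathrm{tr}$ on $\mathrm{End}(E)$-valued forms and (ii) $\sqrt{-1}\Lambda_\omega dd^c = \widetilde\Delta$ on functions, and to defer all torsion terms to pairings against commutators where they die.
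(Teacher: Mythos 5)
Your strategy is essentially the paper's: differentiate $\Phi_\varepsilon(H(t))=\sqrt{-1}\Lambda_\omega G_{H(t)}-\lambda\cdot\mathrm{Id}_E-\varepsilon\log h(t)$ in time via \eqref{GHK01} and the flow \eqref{Flow}, get the scalar identity by taking fibrewise traces, and run a Bochner-type computation for $|\Phi_\varepsilon|^2_{H(t)}$. But as written there are two genuine gaps. The first concerns the $\varepsilon$-term in \eqref{dec}. You treat $\frac{\partial}{\partial t}\log h$ as if it equals $h^{-1}\frac{\partial h}{\partial t}=4\Phi_\varepsilon$, which is how you arrive at the contribution $-8\varepsilon|\Phi_\varepsilon|^2$. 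For matrix-valued $h$ this identity fails unless $h$ and $\dot h$ commute; what survives in general is only the inequality $\langle\frac{\partial}{\partial t}(\log h),h^{-1}\frac{\partial h}{\partial t}\rangle_{H(t)}\geq 0$, a nontrivial convexity-type fact that the paper imports from Proposition 2.1 of \cite{cjzpzz}. Without some such input the sign of $-2\varepsilon\langle\frac{\partial}{\partial t}(\log h),\Phi_\varepsilon\rangle_{H(t)}$ is uncontrolled and the inequality \eqref{dec} does not follow; your stronger bound $\leq-8\varepsilon|\Phi_\varepsilon|^2-2|D\Phi_\varepsilon|^2$ is not available in general.

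The second gap is that the ``Weitzenb\"ock-type identity'' you invoke for $\widetilde\Delta|\Phi_\varepsilon|^2$ is exactly where the work lies and you do not prove it. The needed pointwise fact is $\sqrt{-1}\Lambda_\omega\mathrm{tr}(D\zeta\wedge D_H^c\zeta)=|D\zeta|^2_{H,\omega}$ for $H$-Hermitian $\zeta$ (identity \eqref{WWWPPP2}): for functions this is the elementary $\sqrt{-1}\Lambda_\omega(df\wedge d^cf)=|df|^2$, but for endomorphisms the off-diagonal terms do not obviously assemble into a square, and the paper proves it by locally diagonalizing $\zeta$ in a unitary frame on a dense open set, following L\"ubke--Teleman. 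Kato's inequality plays no role here and points in the wrong direction. Two smaller corrections: your intermediate factors in the trace identity do not close up (you obtain $2\widetilde\Delta(\mathrm{tr}\,\Phi_\varepsilon)$ and propose to ``absorb the factor''); the clean bookkeeping is $\frac{\partial}{\partial t}G_{H(t)}=\frac{1}{4}D\bigl(D_H^c(h^{-1}\frac{\partial h}{\partial t})\bigr)=D(D_H^c\Phi_\varepsilon)$, whose trace contracted with $\sqrt{-1}\Lambda_\omega$ is exactly $\widetilde\Delta\,\mathrm{tr}\,\Phi_\varepsilon$ with no extraneous factor. Also, the Gauduchon condition $\partial\bar\partial\omega^{n-1}=0$ is not used anywhere in this proposition --- the computation is purely pointwise, with $\sqrt{-1}\Lambda_\omega dd^c=\widetilde\Delta$ holding by definition on any Hermitian manifold; Gauduchon enters only later, in the integral identity of Proposition \ref{idbundle01}.
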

where $h(t)= H_{0} ^{-1}H(t)$ and $\mathrm{tr}$ denotes the trace operator.
\begin{proof}
For simplicity, set $\Phi_{\varepsilon} (H(t))= \sqrt{-1}\Lambda_{\omega }G_{H(t)}-\lambda \cdot \textmd{Id}_E-\varepsilon \log h(t)$. It follows that \(\Phi_{\varepsilon}(H(t))\) is self-adjoint with respect to \(H(t)\), i.e., \(\Phi_{\varepsilon}(H(t))^{\ast H} = \Phi_{\varepsilon}(H(t))\). By direct computation, we derive
\begin{equation}\label{trace1}
\begin{split}
\frac{\partial}{\partial t}\Phi_{\varepsilon}(H(t))&=\frac{\partial}{\partial t} \{\Phi_{\varepsilon}(H_{0})+\sqrt{-1}\Lambda_{\omega }(G_{H(t)}-G_{H_{0}})-\varepsilon \log h(t)\}\\
&=\frac{\sqrt{-1}}{4}\frac{\partial}{\partial t} \{\Lambda_{\omega}D(h^{-1}D_{H_{0}}^{c}h)\}-\varepsilon \frac{\partial}{\partial t} (\log h(t)) \\
&=\frac{\sqrt{-1}}{4} \Lambda_{\omega} D \{ D_{H_{0}}^{c}(h^{-1}\frac{\partial h}{\partial t})+[h^{-1}D_{H_{0}}^{c}h,h^{-1}\frac{\partial h}{\partial t}]\}-\varepsilon \frac{\partial}{\partial t} (\log h(t)) \\
&=\frac{\sqrt{-1}}{4}\Lambda_{\omega } D (D_{H}^{c}(h^{-1}\frac{\partial h}{\partial t}))-\varepsilon \frac{\partial}{\partial t}(\log h(t)),
\end{split}
\end{equation}
and
\begin{equation*}
\begin{split}
\widetilde{\Delta}|\Phi_{\varepsilon}(H(t))|^2_{H(t)}&=\sqrt{-1}\Lambda_{\omega } dd^{c} \textmd{tr}\{\Phi_{\varepsilon}(H(t))\circ\ \Phi_{\varepsilon}(H(t))\}\\
&=\sqrt{-1}\Lambda_{\omega }\textmd{tr} DD_{H}^{c} \{\Phi_{\varepsilon}(H(t))\circ\ \Phi_{\varepsilon}(H(t))\}\\
&=2\sqrt{-1}\Lambda_{\omega} \textmd{tr}\{DD_{H}^{c}\Phi_{\varepsilon}(H(t))\circ\ \Phi_{\varepsilon}(H(t))\}+2|D\Phi_{\varepsilon}(H(t))|^2_{H(t),\omega},
\end{split}
\end{equation*}
where the last equality follows from the following claim.
\begin{claim}
Let $H$ be a Hermitian metric on $E$, set $$\mathrm{Herm}(E,H)=\{\zeta\in \Gamma(M,\mathrm{End}(E))| \zeta^{*H}=\zeta\}.$$
For any $\zeta\in\mathrm{Herm}(E,H)$ it holds
\begin{equation}\label{WWWPPP2}
\sqrt{-1}\Lambda_{\omega}\mathrm{tr}(D\zeta\wedge D_{H}^{c}\zeta)=|D\zeta|^2_{H,\omega}.
\end{equation}
\end{claim}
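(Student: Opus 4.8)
The plan is to prove \eqref{WWWPPP2} pointwise: fix a point $p\in M$, choose normal holomorphic coordinates $\{z^\alpha\}$ for $\omega$ at $p$ and an $H$-orthonormal local frame of $E$ near $p$, and reduce everything to an algebraic identity. The structural input is the decomposition $D=D_H+\psi_H$, which gives $D\zeta=D_H\zeta+[\psi_H,\zeta]$. Splitting into bidegrees, I would write $D\zeta=P+Q$, where
\[
P=\partial_H\zeta+[\psi_H^{1,0},\zeta]\quad\text{(the $(1,0)$-part)},\qquad Q=\bar\partial_H\zeta+[\psi_H^{0,1},\zeta]\quad\text{(the $(0,1)$-part)}.
\]
Since the two bidegrees are pointwise $g$-orthogonal, $|D\zeta|^2_{H,\omega}=|P|^2_{H,\omega}+|Q|^2_{H,\omega}$, so it suffices to show $\sqrt{-1}\Lambda_\omega\,\mathrm{tr}(D\zeta\wedge D_H^c\zeta)=|P|^2_{H,\omega}+|Q|^2_{H,\omega}$. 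The same decomposition applied to $D_H^c=D_H''-D_H'$ gives $D_H^c\zeta=(\bar\partial_H\zeta-[\psi_H^{0,1},\zeta])+([\psi_H^{1,0},\zeta]-\partial_H\zeta)$, the first parenthesis being its $(0,1)$-part and the second its $(1,0)$-part.

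The key step is to rewrite $D_H^c\zeta$ in terms of the $H$-adjoints of $P$ and $Q$, exploiting $\zeta^{*H}=\zeta$. Since $D_H$ is $H$-unitary and $\zeta$ is $H$-self-adjoint, $(\bar\partial_H\zeta)^{*H}=\partial_H\zeta$; since $\psi_H$ is $H$-self-adjoint, $(\psi_H^{1,0})^{*H}=\psi_H^{0,1}$; and $[A,B]^{*H}=-[A^{*H},B^{*H}]$ for any $A,B$. Recalling that the $*H$-operation on $\mathrm{End}(E)$-valued forms also conjugates the form part and hence interchanges the bidegrees $(1,0)$ and $(0,1)$, these three facts combine to give $P^{*H}=\bar\partial_H\zeta-[\psi_H^{0,1},\zeta]$ and $Q^{*H}=\partial_H\zeta-[\psi_H^{1,0},\zeta]$. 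Comparing with the previous paragraph,
\[
D_H^c\zeta=P^{*H}-Q^{*H},
\]
where $P^{*H}$ is its $(0,1)$-component and $-Q^{*H}$ its $(1,0)$-component.

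Now expand $D\zeta\wedge D_H^c\zeta=(P+Q)\wedge(P^{*H}-Q^{*H})$ into four terms. Of these, $P\wedge Q^{*H}$ has pure bidegree $(2,0)$ and $Q\wedge P^{*H}$ has pure bidegree $(0,2)$, so both are annihilated by $\Lambda_\omega$; after taking $\mathrm{tr}$, what survives is
\[
\sqrt{-1}\Lambda_\omega\,\mathrm{tr}(D\zeta\wedge D_H^c\zeta)=\sqrt{-1}\Lambda_\omega\,\mathrm{tr}(P\wedge P^{*H})-\sqrt{-1}\Lambda_\omega\,\mathrm{tr}(Q\wedge Q^{*H}).
\]
The last ingredient is the elementary pointwise identity $\sqrt{-1}\Lambda_\omega\,\mathrm{tr}(A^{*H}\wedge A)=|A|^2_{H,\omega}$ for any $\mathrm{End}(E)$-valued $(0,1)$-form $A$, which is a one-line check in the chosen frames, together with the anti-symmetry $\mathrm{tr}(a\wedge b)=-\mathrm{tr}(b\wedge a)$ of the trace of a wedge of $\mathrm{End}(E)$-valued $1$-forms and the fact that $*H$ is an isometry. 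Applying the identity with $A=Q$ gives $\sqrt{-1}\Lambda_\omega\,\mathrm{tr}(Q^{*H}\wedge Q)=|Q|^2_{H,\omega}$, hence $\sqrt{-1}\Lambda_\omega\,\mathrm{tr}(Q\wedge Q^{*H})=-|Q|^2_{H,\omega}$; applying it with $A=P^{*H}$ (so $A^{*H}=P$) gives $\sqrt{-1}\Lambda_\omega\,\mathrm{tr}(P\wedge P^{*H})=|P^{*H}|^2_{H,\omega}=|P|^2_{H,\omega}$. Summing yields $|P|^2_{H,\omega}+|Q|^2_{H,\omega}=|D\zeta|^2_{H,\omega}$, which is \eqref{WWWPPP2}.

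There is no analytic difficulty in this argument: it is purely algebraic and local. The only points I expect to require genuine care are the conjugation built into the $*H$-operation on form-valued endomorphisms (it is exactly what forces $P^{*H}$ to be the $(0,1)$-part, rather than the $(1,0)$-part, of $D_H^c\zeta$), the sign in $([\psi_H^{1,0},\zeta])^{*H}=-[\psi_H^{0,1},\zeta]$, and the sign flip $\mathrm{tr}(Q\wedge Q^{*H})=-\mathrm{tr}(Q^{*H}\wedge Q)$ — it is precisely this flip that converts the minus sign in $D\zeta\wedge D_H^c\zeta=P\wedge P^{*H}-Q\wedge Q^{*H}+\cdots$ into the plus sign needed so that both $|P|^2_{H,\omega}$ and $|Q|^2_{H,\omega}$ appear with the correct (positive) sign.
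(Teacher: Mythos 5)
Your argument is correct, and it takes a genuinely different route from the paper's. The paper proves the claim by first invoking the result of L\"ubke--Teleman that a self-adjoint $\zeta$ can be smoothly diagonalized, $\zeta=\sum_i\lambda_i\,e_i\otimes e^i$, in a local $H$-unitary frame on a dense open subset of $M$; it then writes $D=d+A$ with $(1,0)$- and $(0,1)$-parts $B$ and $C$, computes $D\zeta$ and $D_H^c\zeta$ explicitly in that frame, and verifies the identity termwise, arriving at $\sum_i|d\lambda_i|^2+\sum_{i\neq j}(\lambda_i-\lambda_j)^2(|B_i^j|^2+|C_i^j|^2)=|D\zeta|^2_{H,\omega}$. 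You instead work frame-freely: you split $D\zeta=P+Q$ by bidegree, observe that self-adjointness of $\zeta$ and of $\psi_H$ forces $D_H^c\zeta=P^{*H}-Q^{*H}$ (with the signs you track carefully), discard the $(2,0)$- and $(0,2)$-components under $\Lambda_\omega$, and finish with the elementary identity $\sqrt{-1}\Lambda_\omega\mathrm{tr}(A^{*H}\wedge A)=|A|^2_{H,\omega}$ for $(0,1)$-forms. What the paper's computation buys is that the pointwise positivity of each term is completely explicit in the eigenframe (at the cost of the diagonalization lemma and the restriction to a dense open set, extended by continuity); what your argument buys is that it is purely algebraic, valid at every point without any genericity or regularity assumption on the eigenvalues, and it isolates the structural reason the identity holds, namely that $D_H^c\zeta$ is the ``adjoint'' of $D\zeta$ up to the sign on the $(1,0)$-part. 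The three delicate points you flag (the conjugation in $*H$ on form-valued endomorphisms, the sign in $[\psi_H^{1,0},\zeta]^{*H}=-[\psi_H^{0,1},\zeta]$, and the antisymmetry of $\mathrm{tr}$ on wedges of $1$-forms) are exactly the ones that need care, and you handle them correctly.
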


\begin{proof}
According to \cite[P.237]{LT95}, for any $\zeta \in \mathrm{Herm}(E,H)$, there exists an open dense subset $V \subset M$ such that at every point $x \in V$, one can find a neighborhood $U$ of $x$, a local $H$-unitary frame $\{e_i\}_{i=1}^r$, and functions $\lambda_i \in C^\infty(U,\mathbb{R})$ satisfying
\begin{eqnarray*}
\zeta(y)=\sum_{i=1}^r \lambda_i(y)\cdot e_i(y)\otimes e^i(y)
\end{eqnarray*}
for all $y\in U$, where $\{e^i\}_{i=1}^{r}$ denotes the dual frame of $E^*$.
Let $D = d + A$, where $D_H = d + \frac{1}{2}(A - \overline{A}^T)$ and $\psi_H = \frac{1}{2}(A + \overline{A}^T)$.
Let $B$ and $C$ be the $(1,0)$ and $(0,1)$ parts of $A$, defined by $B(e_i) = B_i^j e_j$ and $C(e_i) = C_i^j e_j$.
This implies
\begin{align*}
D\zeta = \sum_{i=1}^{r} d\lambda_{i} e_{i} \otimes e^{i}
+ \sum_{i \neq j} (\lambda_{i} - \lambda_{j})(B_{i}^{j} + C_{i}^{j}) e_{j} \otimes e^{i}.\\
D_{H}^{c}\zeta = \sum_{i=1}^{r} d^{c}\lambda_{i} \, e_{i} \otimes e^{i}
+ \sum_{i \neq j} (\lambda_{i} - \lambda_{j})(\overline{C_{j}^{i}} - \overline{B_{j}^{i}}) \, e_{j} \otimes e^{i}.
\end{align*}
Then
\begin{equation}
\begin{split}
&\sqrt{-1}\Lambda_{\omega}\mathrm{tr}(D\zeta\wedge D_{H}^{c}\zeta)\\
=&\sum_{i=1}^{r}\sqrt{-1}\Lambda_{\omega}(d\lambda_{i}\wedge d^{c}\lambda_{i})
+\sum_{i\neq j}(\lambda_{i}-\lambda_{j})^{2}\sqrt{-1}\Lambda_{\omega}(B^{j}_{i}+C_{i}^{j})\wedge (\overline{B_{i}^{j}}-\overline{C_{i}^{j}})\\
=&\sum_{i=1}^{r}|d\lambda_{i}|^{2}+\sum_{i\neq j}(\lambda_{i}-\lambda_{j})^{2}(|B_{i}^{j}|^{2}+|C_{i}^{j}|^{2})\\
=&|D\zeta|^2_{H,\omega}.
\end{split}
\end{equation}
\end{proof}

By applying (\ref{trace1}), we can verify that
\begin{equation}\label{trace2}
(\frac{\partial}{\partial t}-\widetilde{\Delta}) \tr \Phi_{\varepsilon}(H(t))=-4\varepsilon \tr \Phi_{\varepsilon}(H(t)).
\end{equation}
This directly implies (\ref{trace}).

From
\cite[Proposition 2.1]{cjzpzz}, we know that
\begin{equation*}
\langle \frac{\partial}{\partial t}(\log h), h^{-1}\frac{\partial h}{\partial t}\rangle_{H(t)}\geq 0.
\end{equation*}
Combining \eqref{trace1} and \eqref{trace2}, we obtain
\begin{equation*}\label{cur1}
\begin{split}
(\frac{\partial}{\partial t}-\widetilde{\Delta})|\Phi_{\varepsilon}(H(t))|^2_{H(t)}
&=-2|D\Phi_{\varepsilon}(H(t))|^2_{H(t),\omega}-2\varepsilon \langle \frac{\partial}{\partial t}(\log h),\Phi_{\varepsilon}(H(t))\rangle_{H(t)}\\
&\leq 0.
\end{split}
\end{equation*}
This completes the proof of (\ref{dec}).
\end{proof}

In \cite{DON85}, Donaldson introduced the Donaldson's distance on the space of Hermitian metrics for vector bundles.
\begin{defn}
For any Hermitian metrics $H$, $K$ on the vector bundle $E$, the Donaldson's distance between $H$ and $K$ is given by
$$\sigma(H,K)=\mathrm{tr}(H^{-1}K)+\mathrm{tr}(K^{-1}H)-2\mathrm{rank}(E).$$
\end{defn}

It can be shown that $\sigma(H,K)\geq 0$, with equality holding if and only
if $H=K$. A sequence of metrics $\{H_{i}\}$ converges in the $C^{0}$-topology
to $H$ if and only if $\underset{M}{\sup}\ \sigma(H_{i}, H)
 \to 0$ as $i \to \infty$.
\begin{prop}  \label{P243}
Let $H(t)$, $K(t)$ be two solutions to the heat flow \eqref{Flow} with the same initial Hermitian metric $H_{0}$, it holds
$$(\frac{\partial}{\partial t}-\widetilde{\Delta})\sigma(H(t),K(t))\leq 0.$$
\end{prop}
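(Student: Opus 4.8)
The plan is to mimic the classical parabolic maximum-principle argument used by Donaldson for the Hermitian–Einstein heat flow, adapted to the perturbed flow \eqref{Flow}. The key point is that the Donaldson distance $\sigma(H(t),K(t))$ satisfies a differential inequality of the form $(\tfrac{\partial}{\partial t}-\widetilde{\Delta})\sigma \le 0$, which follows from a pointwise computation. First I would fix a point and a local frame, and recall the standard fact (see Donaldson \cite{DON85}) that for two Hermitian metrics solving the \emph{same} type of second-order evolution one gets control on $\operatorname{tr}(H^{-1}K)$. Writing $f = \operatorname{tr}(h_{HK})$ where $h_{HK} = H^{-1}K$, one computes $\tfrac{\partial}{\partial t} f = \operatorname{tr}\!\big(H^{-1}\tfrac{\partial K}{\partial t} - H^{-1}\tfrac{\partial H}{\partial t}\, H^{-1}K\big)$, and substitutes the flow equation \eqref{Flow} for both $H(t)$ and $K(t)$. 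This produces terms of the form $4\,\operatorname{tr}\!\big(h_{HK}(\sqrt{-1}\Lambda_\omega G_{K} - \sqrt{-1}\Lambda_\omega G_{H})\big)$ together with the perturbation contribution $-4\varepsilon\,\operatorname{tr}\!\big(h_{HK}(\log(H_0^{-1}K) - \log(H_0^{-1}H))\big)$.

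The second step is to handle the curvature difference. Using \eqref{GHK01}, $G_{K}-G_{H} = \tfrac14 D\big(\tilde h^{-1}D_{H}^{c}\tilde h\big)$ with $\tilde h = H^{-1}K$, so after contracting with $\Lambda_\omega$ and tracing against $h_{HK}$ one can integrate by parts inside the pointwise Bochner-type identity; the second-order part reassembles into $\widetilde{\Delta} f$ plus a manifestly nonpositive gradient term, exactly as in Donaldson's original computation (the positivity of $\tilde h$ as an $H$-self-adjoint endomorphism is what makes the cross terms have the right sign). This is the step where I expect the bookkeeping to be heaviest, but it is the standard ``two heat flows'' estimate and nothing beyond the identities already collected in Section 2 is needed. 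For the $\varepsilon$-perturbation term, the relevant fact is monotonicity of $x \mapsto \log x$ on positive self-adjoint endomorphisms: since $\log(H_0^{-1}K) - \log(H_0^{-1}H)$ and $h_{HK} - \mathrm{Id}$ ``point in the same direction'' in an appropriate sense, the term $-4\varepsilon\operatorname{tr}(h_{HK}(\log(H_0^{-1}K)-\log(H_0^{-1}H)))$ contributes nonpositively; concretely one reduces to the elementary inequality that for positive reals $a,b$ one has $(a-b)(\log a - \log b)\ge 0$, applied in a simultaneously-diagonalizing frame, combined with the fact that the off-diagonal terms carry the correct positivity.

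The third step is symmetrization: the same computation applied to $\operatorname{tr}(K^{-1}H)$ yields the analogous inequality, and adding the two (and subtracting the constant $2\operatorname{rank}(E)$, which is annihilated by $\tfrac{\partial}{\partial t}-\widetilde{\Delta}$) gives $(\tfrac{\partial}{\partial t}-\widetilde{\Delta})\sigma(H(t),K(t)) \le 0$. The main obstacle, as noted, is organizing the curvature cross-terms so that they collapse into $\widetilde{\Delta}\sigma$ plus a nonpositive remainder; once the Claim (identity \eqref{WWWPPP2}) and the relations \eqref{GHK01} are invoked in the right local frame, this is routine. I would present the computation at a point where $H$ and $K$ (hence $h_{HK}$) are simultaneously diagonalized with respect to an $H$-unitary frame, which simplifies both the curvature term and the $\log$-term simultaneously, and then note that the inequality, being pointwise, holds everywhere on $M$.
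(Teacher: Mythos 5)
Your overall strategy coincides with the paper's: set $h=K^{-1}H$, compute $(\frac{\partial}{\partial t}-\widetilde{\Delta})(\mathrm{tr}\,h+\mathrm{tr}\,h^{-1})$ by substituting the flow \eqref{Flow} for both metrics, use \eqref{GHK01} to convert the curvature difference into $\widetilde{\Delta}\,\mathrm{tr}\,h$ minus the gradient terms $|Dh\cdot h^{-1/2}|^2_{K,\omega}$ and $|Dh^{-1}\cdot h^{1/2}|^2_{H,\omega}$, and then observe that the only remaining contribution is the $\varepsilon$-perturbation term. Up to that point your outline is correct and matches the paper's proof.

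The genuine gap is in your treatment of that perturbation term. After the cancellation the leftover is
\begin{equation*}
4\varepsilon\,\mathrm{tr}\bigl\{h\bigl(\log(H_0^{-1}K)-\log(H_0^{-1}H)\bigr)+h^{-1}\bigl(\log(H_0^{-1}H)-\log(H_0^{-1}K)\bigr)\bigr\},
\end{equation*}
with $h=K^{-1}H=e^{-S_K}e^{S_H}$, where $S_H=\log(H_0^{-1}H)$ and $S_K=\log(H_0^{-1}K)$. You propose to prove its nonpositivity by passing to a ``simultaneously-diagonalizing frame'' and invoking the scalar inequality $(a-b)(\log a-\log b)\ge 0$. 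No such frame exists in general: three metrics $H_0$, $H$, $K$ are involved, and a frame diagonalizing $K^{-1}H$ will not diagonalize $S_H$ and $S_K$, which need not commute with each other or with $h$. What is actually needed is the noncommutative trace inequality $\mathrm{tr}\{(e^{-S_K}e^{S_H}-e^{-S_H}e^{S_K})(S_K-S_H)\}\le 0$, which the paper does not reprove but imports from \cite[Proposition 2.3]{cjzpzz}; its proof uses separate eigenvalue decompositions of $S_H$ and $S_K$ and a kernel-function estimate on the off-diagonal entries, not simultaneous diagonalization. Your remark that ``the off-diagonal terms carry the correct positivity'' gestures at the right phenomenon but does not constitute a proof. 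Everything else in your outline --- the Bochner-type rearrangement, the identification of the gradient terms via the identity \eqref{WWWPPP2}, and the symmetrization in $h$ and $h^{-1}$ --- is sound and is exactly what the paper does.
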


\begin{proof}
Set $h(t)=K(t)^{-1}H(t)$. The equation \eqref{GHK01} gives
\begin{equation}\label{WANMW12}
h(\sqrt{-1}\Lambda_{\omega }(G_{H}-G_{K}))=\frac{1}{4}\sqrt{-1}\Lambda_{\omega }DD_{K}^{c}h-\frac{1}{4}\sqrt{-1}\Lambda_{\omega }Dh\wedge h^{-1}D_{K}^{c}h,
\end{equation}
and
\begin{equation}\label{WANMW213}
h^{-1}(\sqrt{-1}\Lambda_{\omega }(G_{K}-G_{H}))=\frac{1}{4}\sqrt{-1}\Lambda_{\omega }DD_{H}^{c}h^{-1}-\frac{1}{4}\sqrt{-1}\Lambda_{\omega }Dh^{-1}\wedge hD_{H}^{c}h^{-1}.
\end{equation}
Taking the trace on both sides of \eqref{WANMW12} and \eqref{WANMW213}, we have
\begin{equation}\label{GKJHGK541}
\widetilde{\Delta}\textmd{tr}h
=4\textmd{tr}\{h(\sqrt{-1}\Lambda_{\omega }(G_{H}-G_{K}))\}+\sqrt{-1}\Lambda_{\omega }\textmd{tr}(Dh\wedge h^{-1}D_{K}^{c}h),
\end{equation}
and
\begin{equation}\label{GKJHGK5423}
\widetilde{\Delta}\textmd{tr}h^{-1}
=4\textmd{tr}\{h^{-1}(\sqrt{-1}\Lambda_{\omega }(G_{K}-G_{H}))\}+\sqrt{-1}\Lambda_{\omega }\textmd{tr}(Dh^{-1}\wedge hD_{H}^{c}h^{-1}).
\end{equation}
Combining the equalities \eqref{GKJHGK541} and \eqref{GKJHGK5423}, we obain
\begin{equation*}
\begin{split}
&\widetilde{\Delta}(\textmd{tr}h+\textmd{tr}h^{-1})\\
=&4\textmd{tr}\{h(\sqrt{-1}\Lambda_{\omega }(G_{H}-G_{K}))\}+4\textmd{tr}\{h^{-1}(\sqrt{-1}\Lambda_{\omega }(G_{K}-G_{H}))\}\\
&+\sqrt{-1}\Lambda_{\omega }\textmd{tr}(Dh\wedge h^{-1}D_{K}^{c}h)+\sqrt{-1}\Lambda_{\omega }\textmd{tr}(Dh^{-1}\wedge hD_{H}^{c}h^{-1}).
\end{split}
\end{equation*}
By using \eqref{Flow}, we have
\begin{equation*}
\begin{split}
&\frac{\partial}{\partial t}(\textmd{tr}h+\textmd{tr}h^{-1})\\
=&\textmd{tr}(-K^{-1}\frac{\partial K}{\partial t}K^{-1}H+K^{-1}\frac{\partial H}{\partial t})+\textmd{tr}(-H^{-1}\frac{\partial H}{\partial t}H^{-1}K+H^{-1}\frac{\partial K}{\partial t})\\
=&\textmd{tr}\{h(4\sqrt{-1}\Lambda_{\omega }(G_{H}-G_{K})+4\varepsilon \log(H_0^{-1}K)-4\varepsilon \log(H_0^{-1}H))\}\\
&+\textmd{tr}\{h^{-1}(4\sqrt{-1}\Lambda_{\omega }(G_{K}-G_{H})+4\varepsilon \log(H_0^{-1}H)-4\varepsilon \log(H_0^{-1}K))\}\\
=&4\textmd{tr}\{h(\sqrt{-1}\Lambda_{\omega }(G_{H}-G_{K}))+h^{-1}(\sqrt{-1}\Lambda_{\omega }(G_{K}-G_{H}))\}\\
&+4\varepsilon \textmd{tr} \{h (\log (H_0^{-1}K)-\log (H_0^{-1}H)) + h^{-1}(\log (H_0^{-1}H)-\log (H_0^{-1}K))\}.
\end{split}
\end{equation*}
Then, we obtain
\begin{equation*}
\begin{split}
&(\frac{\partial}{\partial t}-\widetilde{\Delta})(\textmd{tr}h+\textmd{tr}h^{-1})\\
=&-\textmd{tr}(\sqrt{-1}\Lambda_{\omega }Dh\wedge h^{-1}D_{K}^{c}h)-\textmd{tr}(\sqrt{-1}\Lambda_{\omega }Dh^{-1}\wedge hD_{H}^{c}h^{-1})\\
&~~+4\varepsilon \textmd{tr} \{h (\log (H_0^{-1}K)-\log (H_0^{-1}H)) + h^{-1}(\log (H_0^{-1}H)-\log (H_0^{-1}K))\}\\
=&-|Dh\cdot h^{-1/2}|_{K,\omega}^{2}-|Dh^{-1}\cdot h^{1/2}|_{H,\omega}^{2}+4\varepsilon \textmd{tr} \{h (\log (H_0^{-1}K)-\log (H_0^{-1}H))\\
& + h^{-1}(\log (H_0^{-1}H)-\log (H_0^{-1}K))\}\\
\leq & 0,
\end{split}
\end{equation*}
where the second equality holds by
\begin{equation*}
\textmd{tr}(\sqrt{-1}\Lambda_{\omega }Dh\wedge h^{-1}D_{K}^{c}h)=|Dh\cdot h^{-1/2}|_{K,\omega}^{2}
\end{equation*}
and
\begin{equation*}
\textmd{tr}(\sqrt{-1}\Lambda_{\omega }Dh^{-1}\wedge hD_{H}^{c}h^{-1})=|Dh^{-1}\cdot h^{1/2}|_{H,\omega}^{2},
\end{equation*}
the last inequality follows from the computations in \cite[Proposition 2.3]{cjzpzz}, which yield
\begin{equation*}
4\varepsilon \textmd{tr} \{h (\log (H_0^{-1}K)-\log (H_0^{-1}H))+ h^{-1}(\log (H_0^{-1}H)-\log (H_0^{-1}K))\} \leq 0.
\end{equation*}
This completes the proof.
\end{proof}

\begin{cor} \label{uniq}
Let $H$, $K$ be two Hermitian metrics satisfying \eqref{eq}, then we have
$$\widetilde{\Delta}\sigma(H,K)\geq 0.$$
\end{cor}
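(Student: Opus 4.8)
The plan is to observe that any Hermitian metric satisfying the perturbed equation \eqref{eq} is a stationary solution of the heat flow \eqref{Flow}, and then read off the corollary from Proposition \ref{P243}. Concretely: if $H$ solves \eqref{eq}, i.e. $\sqrt{-1}\Lambda_{\omega}G_{H}-\lambda\cdot\mathrm{Id}_E-\varepsilon\log(H_{0}^{-1}H)=0$, then the constant family $H(t)\equiv H$ solves \eqref{Flow}, because the right-hand side of \eqref{Flow} vanishes identically, so that $\partial H(t)/\partial t=0$ is consistent. The same applies to $K(t)\equiv K$. Thus $H(t)\equiv H$ and $K(t)\equiv K$ are two solutions of \eqref{Flow} sharing the same background metric $H_{0}$, the same constant $\lambda$, and the same $\varepsilon$ — which is all that the argument of Proposition \ref{P243} actually uses. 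Applying Proposition \ref{P243} to this pair gives $\big(\tfrac{\partial}{\partial t}-\widetilde{\Delta}\big)\sigma(H(t),K(t))\leq 0$; since $H(t)$ and $K(t)$ are independent of $t$, so is $\sigma(H(t),K(t))=\sigma(H,K)$, hence $\tfrac{\partial}{\partial t}\sigma(H(t),K(t))=0$ and the inequality collapses to $-\widetilde{\Delta}\sigma(H,K)\leq 0$, i.e. $\widetilde{\Delta}\sigma(H,K)\geq 0$.

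If one prefers not to lean on the (correct but implicit) reading that the common-initial-data hypothesis in Proposition \ref{P243} is inessential, there is an equivalent direct route reproving exactly what is needed from the identities \eqref{GKJHGK541} and \eqref{GKJHGK5423}. Set $h=K^{-1}H$. Substituting $\sqrt{-1}\Lambda_{\omega}G_{H}=\lambda\,\mathrm{Id}_E+\varepsilon\log(H_{0}^{-1}H)$ and $\sqrt{-1}\Lambda_{\omega}G_{K}=\lambda\,\mathrm{Id}_E+\varepsilon\log(H_{0}^{-1}K)$ into \eqref{GKJHGK541} and \eqref{GKJHGK5423} yields
\[
\widetilde{\Delta}\,\mathrm{tr}\,h=4\varepsilon\,\mathrm{tr}\{h(\log(H_{0}^{-1}H)-\log(H_{0}^{-1}K))\}+|Dh\cdot h^{-1/2}|^{2}_{K,\omega},
\]
\[
\widetilde{\Delta}\,\mathrm{tr}\,h^{-1}=4\varepsilon\,\mathrm{tr}\{h^{-1}(\log(H_{0}^{-1}K)-\log(H_{0}^{-1}H))\}+|Dh^{-1}\cdot h^{1/2}|^{2}_{H,\omega},
\]
using the two trace identities recorded in the proof of Proposition \ref{P243}. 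Adding these, the two norm terms are non-negative, and the remaining term is precisely the negative of the quantity shown to be $\leq 0$ in \cite[Proposition 2.3]{cjzpzz} (and used as such in the proof of Proposition \ref{P243}); hence $\widetilde{\Delta}\sigma(H,K)=\widetilde{\Delta}(\mathrm{tr}\,h+\mathrm{tr}\,h^{-1})\geq 0$.

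There is no genuine analytic obstacle here: the statement is a pointwise identity plus the pointwise convexity inequality for $\mathrm{tr}\{(h-h^{-1})(\log(H_{0}^{-1}H)-\log(H_{0}^{-1}K))\}$ already established in \cite{cjzpzz}. The only point that merits a sentence of care is the reduction itself, namely that a solution of the \emph{elliptic} equation \eqref{eq} is a stationary solution of the \emph{parabolic} equation \eqref{Flow}, so that the maximum-principle machinery of Proposition \ref{P243} applies verbatim to constant paths; once this is noted, the conclusion is immediate. I would present the first route as the proof and remark on the second as an alternative, since it makes clear that nothing beyond \eqref{GKJHGK541}, \eqref{GKJHGK5423} and the inequality from \cite{cjzpzz} is being invoked.
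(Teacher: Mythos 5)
Your proposal is correct and matches the paper's intended derivation: the corollary is stated without proof immediately after Proposition \ref{P243}, precisely because solutions of \eqref{eq} are stationary solutions of \eqref{Flow}, so the parabolic inequality collapses to $\widetilde{\Delta}\sigma(H,K)\geq 0$. Your observation that the common-initial-metric hypothesis in Proposition \ref{P243} is never used in deriving the differential inequality (only in the uniqueness application) is the right point of care, and your second, direct route via \eqref{GKJHGK541} and \eqref{GKJHGK5423} correctly reproves exactly what is needed.
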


\begin{prop}  \label{A2111}
Let $H(t)$ be a solution to the heat flow \eqref{Flow} with the initial Hermitian metric $H_{0}$, it holds
$$(\widetilde{\Delta}-\frac{\partial}{\partial t}) \log \{ {\rm tr} (H_0^{-1}H)+ {\rm tr} (H^{-1}H_0)\} \geq -4|\sqrt{-1}\Lambda_{\omega } G_{H_{0}}-\lambda \cdot \mathrm{Id}_E-\varepsilon \log (H_0^{-1}H)|_{H_{0}}.$$
\end{prop}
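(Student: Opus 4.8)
The plan is to reduce the statement to a pointwise inequality for $h:=H_0^{-1}H$, by combining the evolution equations for $\mathrm{tr}\,h$ and $\mathrm{tr}\,h^{-1}$ with the identity $\widetilde{\Delta}\log u = u^{-1}\widetilde{\Delta}u - u^{-2}|\nabla u|_g^2$. Throughout write $u := \mathrm{tr}(H_0^{-1}H)+\mathrm{tr}(H^{-1}H_0) = \mathrm{tr}\,h + \mathrm{tr}\,h^{-1}$, so that $u \geq 2\,\mathrm{rank}(E) > 0$ by the arithmetic--geometric mean inequality and $\log u$ is smooth; and write $\Phi_0 := \sqrt{-1}\Lambda_\omega G_{H_0} - \lambda\cdot\mathrm{Id}_E - \varepsilon\log(H_0^{-1}H)$, which is the endomorphism appearing on the right-hand side. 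Note that $h$ is positive and self-adjoint with respect to $H_0$, and that $\Phi_0$ is $H_0$-self-adjoint: indeed $\sqrt{-1}\Lambda_\omega G_{H_0} - \lambda\,\mathrm{Id}_E$ is $H_0$-self-adjoint (this is the $t=0$ case of the self-adjointness noted in the proof of Proposition \ref{P1}), and $\log h$ is $H_0$-self-adjoint.

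The first step is to rerun the computation from the proof of Proposition \ref{P243} with the fixed metric $H_0$ playing the role of $K$. Tracing \eqref{GHK01} as in \eqref{GKJHGK541}--\eqref{GKJHGK5423} gives $\widetilde{\Delta}\,\mathrm{tr}\,h = 4\,\mathrm{tr}\{h\,\sqrt{-1}\Lambda_\omega(G_H-G_{H_0})\} + |Dh\cdot h^{-1/2}|_{H_0,\omega}^{2}$ and, symmetrically, $\widetilde{\Delta}\,\mathrm{tr}\,h^{-1} = 4\,\mathrm{tr}\{h^{-1}\sqrt{-1}\Lambda_\omega(G_{H_0}-G_H)\} + |Dh^{-1}\cdot h^{1/2}|_{H,\omega}^{2}$; while from \eqref{Flow}, using $\tfrac{\partial}{\partial t}H_0 = 0$, one has $\tfrac{\partial}{\partial t}\mathrm{tr}\,h = 4\,\mathrm{tr}\{h(\sqrt{-1}\Lambda_\omega G_H - \lambda\,\mathrm{Id}_E - \varepsilon\log h)\}$ and $\tfrac{\partial}{\partial t}\mathrm{tr}\,h^{-1} = -4\,\mathrm{tr}\{h^{-1}(\sqrt{-1}\Lambda_\omega G_H - \lambda\,\mathrm{Id}_E - \varepsilon\log h)\}$. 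Adding these four identities, the $G_H-G_{H_0}$ terms cancel and we obtain
\[
\Big(\frac{\partial}{\partial t}-\widetilde{\Delta}\Big)u = 4\,\mathrm{tr}\big((h-h^{-1})\Phi_0\big) - |Dh\cdot h^{-1/2}|_{H_0,\omega}^{2} - |Dh^{-1}\cdot h^{1/2}|_{H,\omega}^{2}.
\]

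Now $(\widetilde{\Delta}-\tfrac{\partial}{\partial t})\log u = u^{-1}(\widetilde{\Delta}-\tfrac{\partial}{\partial t})u - u^{-2}|\nabla u|_g^2$, so it is enough to prove two estimates. For the zeroth-order term, Cauchy--Schwarz for the trace pairing of $H_0$-self-adjoint endomorphisms gives $|\mathrm{tr}((h-h^{-1})\Phi_0)| \leq |h-h^{-1}|_{H_0}\,|\Phi_0|_{H_0}$; and writing $\mu_1,\dots,\mu_r>0$ for the eigenvalues of $h$ one has $|h-h^{-1}|_{H_0}^2 = \sum_i(\mu_i-\mu_i^{-1})^2 \leq \big(\sum_i(\mu_i+\mu_i^{-1})\big)^2 = u^2$, hence $-4u^{-1}\mathrm{tr}((h-h^{-1})\Phi_0) \geq -4|\Phi_0|_{H_0}$, which is exactly the claimed lower bound. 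It remains to show that the other two terms combine to something nonnegative, i.e. the Kato-type inequality
\[
|\nabla u|_g^2 \;\leq\; u\,\Big(|Dh\cdot h^{-1/2}|_{H_0,\omega}^{2} + |Dh^{-1}\cdot h^{1/2}|_{H,\omega}^{2}\Big).
\]

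This inequality is the technical core, and I expect it to be the main obstacle. I would prove it on the open dense subset of $M$ where the eigenvalues of $h$ are pairwise distinct, extending to all of $M$ by continuity. There, diagonalizing $h$ by a local $H_0$-unitary frame as in the Claim above, the diagonal contributions to $|Dh\cdot h^{-1/2}|_{H_0,\omega}^{2}$ and $|Dh^{-1}\cdot h^{1/2}|_{H,\omega}^{2}$ are $\sum_i\mu_i^{-1}|\nabla\mu_i|_g^2$ and $\sum_i\mu_i^{-3}|\nabla\mu_i|_g^2$ respectively (the off-diagonal contributions being nonnegative and discarded), while $\nabla u = \sum_i(1-\mu_i^{-2})\nabla\mu_i$. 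Applying the triangle inequality followed by Cauchy--Schwarz with weights $w_i = \mu_i^{-1}+\mu_i^{-3}$ then reduces the claim to the elementary pointwise inequality
\[
\frac{(\mu^2-1)^2}{\mu(\mu^2+1)} \;\leq\; \frac{\mu^2+1}{\mu} \qquad (\mu>0),
\]
i.e. $(\mu^2-1)^2 \leq (\mu^2+1)^2$. The delicate part is the exact bookkeeping of the constants in the diagonalized gradient norms, via the identifications $\mathrm{tr}(\sqrt{-1}\Lambda_\omega Dh\wedge h^{-1}D_{H_0}^c h) = |Dh\cdot h^{-1/2}|_{H_0,\omega}^{2}$ and $\mathrm{tr}(\sqrt{-1}\Lambda_\omega Dh^{-1}\wedge hD_H^c h^{-1}) = |Dh^{-1}\cdot h^{1/2}|_{H,\omega}^{2}$ from the proof of Proposition \ref{P243}, together with the fiber norms $|e_j\otimes e^i|_H^2 = \mu_j/\mu_i$, so that the weighted Cauchy--Schwarz closes with no loss.
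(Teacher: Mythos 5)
Your proposal is correct and follows essentially the same route as the paper: compute $(\widetilde{\Delta}-\tfrac{\partial}{\partial t})(\mathrm{tr}\,h+\mathrm{tr}\,h^{-1})$ by combining \eqref{GHK01} with the flow equation, bound the zeroth-order term by $-4u|\Phi_0|_{H_0}$, and absorb $|\nabla u|^2/u^2$ into the two nonnegative gradient terms via a Siu-type inequality. The only difference is cosmetic: the paper invokes the inequalities $\mathrm{tr}(\sqrt{-1}\Lambda_\omega Dh\wedge h^{-1}D_{H_0}^c h)\geq |d\,\mathrm{tr}\,h|^2/\mathrm{tr}\,h$ (and its analogue for $h^{-1}$) by reference to \cite{ytslt} and combines them by Cauchy--Schwarz, whereas you prove the combined Kato-type inequality directly by diagonalizing $h$ with the weights $w_i=\mu_i^{-1}+\mu_i^{-3}$ --- a self-contained verification of the same estimate.
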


\begin{proof}
Set $h(t)=H_{0}^{-1}H(t)$. Then we have
\begin{equation*}
\begin{split}
&(\widetilde{\Delta}-\frac{\partial}{\partial t})(\textmd{tr}h+\textmd{tr}h^{-1})\\
=&-4\textmd{tr}\{h(\sqrt{-1}\Lambda_{\omega }G_{H_{0}}-\lambda\cdot \textmd{Id}_{E}-\varepsilon \log h)\}+4\textmd{tr}\{h^{-1}(\sqrt{-1}\Lambda_{\omega }G_{H_{0}}-\lambda\cdot \textmd{Id}_{E}\\
&-\varepsilon \log h)\}+\sqrt{-1}\Lambda_{\omega }\textmd{tr}(Dh\wedge h^{-1}D_{H_{0}}^{c}h)+\sqrt{-1}\Lambda_{\omega }\textmd{tr}(Dh^{-1}\wedge hD_{H}^{c}h^{-1}).\\
\end{split}
\end{equation*}
By a similar argument as in \cite{ytslt}, it can be verified that
\begin{equation*}
 \frac{1}{\textmd{tr}h}\cdot\ \textmd{tr}(\sqrt{-1}\Lambda_{\omega }Dh\wedge h^{-1}D_{H_{0}}^{c}h)-\frac{1}{(\textmd{tr}h)^{2}} |d\ {\rm tr} h|^{2} \geq 0
\end{equation*}
and
\begin{equation*}
 \frac{1}{\textmd{tr}h^{-1}}\cdot\ \textmd{tr}(\sqrt{-1}\Lambda_{\omega }Dh^{-1}\wedge hD_{H}^{c}h^{-1})-\frac{1}{(\textmd{tr}h^{-1})^{2}} |d\ {\rm tr} h^{-1}|^{2} \geq 0.
\end{equation*}
From the above two inequalities, we derive
\begin{equation*}
\begin{split}
\frac{1}{\textmd{tr}h+\textmd{tr}h^{-1}}\cdot \textmd{tr} \{\sqrt{-1}\Lambda_{\omega }Dh\wedge h^{-1}D_{H_{0}}^{c}h+\sqrt{-1}\Lambda_{\omega }Dh^{-1}\wedge hD_{H}^{c}h^{-1} \}
\geq \frac{|d{\rm tr} h+ d{\rm tr} h^{-1}|^{2}}{({\rm tr} h+ {\rm tr} h^{-1})^{2}}.
\end{split}
\end{equation*}
Then, we have
\begin{equation*}
\begin{split}
&(\widetilde{\Delta}-\frac{\partial}{\partial t}) \log( {\rm tr} h+ {\rm tr} h^{-1})\\
=&\frac{1}{\textmd{tr}h+\textmd{tr}h^{-1}}\cdot(\widetilde{\Delta}-\frac{\partial}{\partial t}) ( {\rm tr} h+ {\rm tr} h^{-1})-\frac{|d{\rm tr} h+ d{\rm tr} h^{-1}|^{2}}{({\rm tr} h+ {\rm tr} h^{-1})^{2}}\\
=&-\frac{|d{\rm tr} h+ d{\rm tr} h^{-1}|^{2}}{({\rm tr} h+ {\rm tr} h^{-1})^{2}}-\frac{4}{\textmd{tr}h+\textmd{tr}h^{-1}} \cdot \textmd{tr}\{h(\sqrt{-1}\Lambda_{\omega }G_{H_{0}}-\lambda\cdot \textmd{Id}_{E}-\varepsilon \log h)\}\\
&+\frac{1}{\textmd{tr}h+\textmd{tr}h^{-1}} \cdot \textmd{tr} \{\sqrt{-1}\Lambda_{\omega }Dh\wedge h^{-1}D_{H_{0}}^{c}h+\sqrt{-1}\Lambda_{\omega }Dh^{-1}\wedge hD_{H}^{c}h^{-1} \}\\
&+\frac{4}{\textmd{tr}h+\textmd{tr}h^{-1}} \cdot \textmd{tr}\{h^{-1}(\sqrt{-1}\Lambda_{\omega }G_{H_{0}}-\lambda\cdot \textmd{Id}_{E}-\varepsilon \log h)\}\\
\geq& -\frac{4}{\textmd{tr}h+\textmd{tr}h^{-1}} \cdot \textmd{tr}\{h(\sqrt{-1}\Lambda_{\omega }G_{H_{0}}-\lambda\cdot \textmd{Id}_{E}-\varepsilon \log h)\}\\
&+\frac{4}{\textmd{tr}h+\textmd{tr}h^{-1}}\cdot \textmd{tr}\{h^{-1}(\sqrt{-1}\Lambda_{\omega }G_{H_{0}}-\lambda\cdot \textmd{Id}_{E}-\varepsilon \log h)\}\\
\geq&  -4|\sqrt{-1}\Lambda_{\omega } G_{H_{0}}-\lambda \cdot \textmd{Id}_E-\varepsilon \log h|_{H_{0}},
\end{split}
\end{equation*}
where the last inequality follows from $\frac{{\rm tr} h- {\rm tr} h^{-1}}{{\rm tr} h+ {\rm tr} h^{-1}}\leq 1$.
\end{proof}
Building on the analysis of the previous proposition, we establish the following result.
\begin{prop}  \label{P2}
Let $H$ and $H_{0}$ be two Hermitian metrics, then we have
$$\widetilde{\Delta} \log \{ {\rm tr} (H_0^{-1}H)+ {\rm tr} (H^{-1}H_0)\} \geq -4|\sqrt{-1}\Lambda_{\omega } G_{H_{0}}-\lambda \cdot {\rm Id}_E|_{H_{0}}-4|\sqrt{-1}\Lambda_{\omega } G_{H}-\lambda \cdot {\rm Id}_{E}|_{H}.$$
\end{prop}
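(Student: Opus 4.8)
The plan is to reproduce the argument of Proposition~\ref{A2111} in the time-independent setting, replacing the operator $\widetilde{\Delta}-\frac{\partial}{\partial t}$ by $\widetilde{\Delta}$ and keeping the two curvature endomorphisms on an equal footing. Put $h=H_{0}^{-1}H$; then $h$ is positive and self-adjoint with respect to both $H_{0}$ and $H$, and the same holds for $h^{-1}=H^{-1}H_{0}$. Starting from \eqref{GHK01} and proceeding exactly as in the derivation of \eqref{GKJHGK541}--\eqref{GKJHGK5423} (with the metric $K$ there replaced by $H_{0}$), one obtains
\begin{align*}
\widetilde{\Delta}\,\mathrm{tr}\,h&=4\,\mathrm{tr}\{h(\sqrt{-1}\Lambda_{\omega}(G_{H}-G_{H_{0}}))\}+\sqrt{-1}\Lambda_{\omega}\mathrm{tr}(Dh\wedge h^{-1}D_{H_{0}}^{c}h),\\
\widetilde{\Delta}\,\mathrm{tr}\,h^{-1}&=4\,\mathrm{tr}\{h^{-1}(\sqrt{-1}\Lambda_{\omega}(G_{H_{0}}-G_{H}))\}+\sqrt{-1}\Lambda_{\omega}\mathrm{tr}(Dh^{-1}\wedge hD_{H}^{c}h^{-1}).
\end{align*}

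Next I would use the two Bochner--Kato-type inequalities already invoked in the proof of Proposition~\ref{A2111},
$$\frac{\mathrm{tr}(\sqrt{-1}\Lambda_{\omega}Dh\wedge h^{-1}D_{H_{0}}^{c}h)}{\mathrm{tr}\,h}\ \geq\ \frac{|d\,\mathrm{tr}\,h|^{2}}{(\mathrm{tr}\,h)^{2}},\qquad \frac{\mathrm{tr}(\sqrt{-1}\Lambda_{\omega}Dh^{-1}\wedge hD_{H}^{c}h^{-1})}{\mathrm{tr}\,h^{-1}}\ \geq\ \frac{|d\,\mathrm{tr}\,h^{-1}|^{2}}{(\mathrm{tr}\,h^{-1})^{2}},$$
which, combined with the elementary estimate $\frac{a^{2}}{p}+\frac{b^{2}}{q}\geq\frac{(a+b)^{2}}{p+q}$ for $p,q>0$, bound the sum of the two gradient terms from below by $|d\,\mathrm{tr}\,h+d\,\mathrm{tr}\,h^{-1}|^{2}/(\mathrm{tr}\,h+\mathrm{tr}\,h^{-1})^{2}$. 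Plugging the two displayed identities into $\widetilde{\Delta}\log f=\frac{\widetilde{\Delta}f}{f}-\frac{|df|^{2}}{f^{2}}$ with $f=\mathrm{tr}\,h+\mathrm{tr}\,h^{-1}$, the gradient contributions cancel and there remains
$$\widetilde{\Delta}\log(\mathrm{tr}\,h+\mathrm{tr}\,h^{-1})\ \geq\ \frac{4\,\mathrm{tr}\{h(\sqrt{-1}\Lambda_{\omega}(G_{H}-G_{H_{0}}))\}+4\,\mathrm{tr}\{h^{-1}(\sqrt{-1}\Lambda_{\omega}(G_{H_{0}}-G_{H}))\}}{\mathrm{tr}\,h+\mathrm{tr}\,h^{-1}}.$$

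Finally I would estimate the numerator pointwise. Writing $\sqrt{-1}\Lambda_{\omega}(G_{H}-G_{H_{0}})=(\sqrt{-1}\Lambda_{\omega}G_{H}-\lambda\cdot\mathrm{Id}_{E})-(\sqrt{-1}\Lambda_{\omega}G_{H_{0}}-\lambda\cdot\mathrm{Id}_{E})$, and bounding each of the two resulting pairings $\mathrm{tr}(h\cdot A)$ by $\mathrm{tr}(h)$ times the pointwise norm of $A$ (measured with the metric that makes $A$ self-adjoint: $H$ for the $G_{H}$-term, $H_{0}$ for the $G_{H_{0}}$-term), which is legitimate because $h$ is positive self-adjoint for both metrics so it may be diagonalised in an $H$- or an $H_{0}$-unitary frame as needed, I get
$$\mathrm{tr}\{h(\sqrt{-1}\Lambda_{\omega}(G_{H}-G_{H_{0}}))\}\ \geq\ -\mathrm{tr}(h)\big(|\sqrt{-1}\Lambda_{\omega}G_{H}-\lambda\cdot\mathrm{Id}_{E}|_{H}+|\sqrt{-1}\Lambda_{\omega}G_{H_{0}}-\lambda\cdot\mathrm{Id}_{E}|_{H_{0}}\big),$$
together with the identical lower bound with $h^{-1}$ in place of $h$. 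Substituting these into the previous inequality and cancelling the factor $\mathrm{tr}\,h+\mathrm{tr}\,h^{-1}$ gives the assertion. The only genuinely delicate point is this last numerator estimate: one must be careful to pair $\sqrt{-1}\Lambda_{\omega}G_{H}-\lambda\cdot\mathrm{Id}_{E}$ using an $H$-unitary diagonalisation of $h$, so that its $H$-norm (and not an $H_{0}$-norm) appears; everything else is a direct transcription of the computations carried out in Propositions~\ref{P243} and~\ref{A2111}, and in particular the statement is purely pointwise, so no hypothesis on $M$ is needed.
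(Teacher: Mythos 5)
Your argument is correct and is exactly the route the paper intends: Proposition~\ref{P2} is stated with no separate proof beyond the remark that it builds on Proposition~\ref{A2111}, and your proof is precisely the static version of that computation, using \eqref{GKJHGK541}--\eqref{GKJHGK5423} with $K=H_{0}$, the same Siu-type gradient estimates, and the pointwise bound $\mathrm{tr}(hA)\geq-\mathrm{tr}(h)\,|A|$ applied in the $H$-unitary (resp.\ $H_{0}$-unitary) eigenframe of $h$ for the $G_{H}$ (resp.\ $G_{H_{0}}$) term. The point you flag as delicate --- measuring each curvature term in the norm of the metric that makes it self-adjoint --- is indeed the only substantive addition needed beyond Proposition~\ref{A2111}, and you handle it correctly.
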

\medskip
\begin{lem} [\protect Lemma 5.2 in {\cite{SIM}}] \label{SIMLEMMA}
Suppose that $(M, g)$ is a non-compact Gauduchon manifold equipped with an exhaustion function $\phi$ satisfying $\int_M|\widetilde{\Delta} \phi|\frac{\omega ^{n}}{n!}< \infty$. Let $\eta$ is a $(2n-1)$-form such that
$\int_M|\eta|^2\frac{\omega ^{n}}{n!}< \infty$. If $\mathrm{d}\eta$ is integrable, then
\begin{equation*}
\int_{M}\mathrm{d}\eta=0.
\end{equation*}
\end{lem}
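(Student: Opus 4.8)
The plan is to exhaust $M$ by sublevel sets of $\phi$, integrate by parts against a cut-off adapted to $\phi$, and show that the resulting ``flux at infinity'' vanishes; the only genuinely new analytic input is a uniform bound on the Dirichlet energy of the cut-offs, and that is where the hypothesis $\int_M|\widetilde{\Delta}\phi|\,\tfrac{\omega^n}{n!}<\infty$ together with the Gauduchon condition is used.

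Fix a smooth $\gamma\colon\mathbb{R}\to[0,1]$ with $\gamma\equiv 1$ on $(-\infty,1]$ and $\gamma\equiv 0$ on $[2,+\infty)$, and for $C\ge 1$ set $\gamma_C=\gamma(\phi/C)$; since $\phi$ is an exhaustion function, $\gamma_C$ is compactly supported, $0\le\gamma_C\le 1$, and $\gamma_C\to 1$ pointwise on $M$. As $\gamma_C\eta$ has compact support, Stokes' theorem gives
\[
0=\int_M d(\gamma_C\eta)=\int_M\gamma_C\,d\eta+\int_M d\gamma_C\wedge\eta ,
\]
and since $d\eta$ is integrable, dominated convergence yields $\int_M\gamma_C\,d\eta\to\int_M d\eta$ as $C\to\infty$; hence it suffices to prove $\int_M d\gamma_C\wedge\eta\to 0$. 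Writing $d\gamma_C=\tfrac1C\gamma'(\phi/C)\,d\phi$, which is supported on the compact annulus $A_C:=\{C\le\phi\le 2C\}$, the Cauchy--Schwarz inequality gives
\[
\Bigl|\int_M d\gamma_C\wedge\eta\Bigr|\ \le\ \frac{c\,\|\gamma'\|_{\infty}}{C}\,\Bigl(\int_{A_C}|d\phi|^2\,\tfrac{\omega^n}{n!}\Bigr)^{1/2}\Bigl(\int_{A_C}|\eta|^2\,\tfrac{\omega^n}{n!}\Bigr)^{1/2}
\]
with $c$ an absolute constant. Since $\eta\in L^2(M)$ and $A_C\subset\{\phi\ge C\}$ with $\bigcap_{C\ge 1}\{\phi\ge C\}=\varnothing$, the absolute continuity of the finite measure $|\eta|^2\tfrac{\omega^n}{n!}$ forces $\int_{A_C}|\eta|^2\to 0$. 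Thus the whole statement reduces to showing that $C^{-2}\int_{A_C}|d\phi|^2\,\tfrac{\omega^n}{n!}$ stays bounded (in fact tends to $0$) as $C\to\infty$.

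For this estimate I would use the pointwise identity $\widetilde{\Delta}(\phi^2)=2\phi\,\widetilde{\Delta}\phi+2|d\phi|^2$, so that $\int_{A_C}|d\phi|^2=\tfrac12\int_{A_C}\widetilde{\Delta}(\phi^2)-\int_{A_C}\phi\,\widetilde{\Delta}\phi$. The second term is $O(C)$ because $0<\phi\le 2C$ on $A_C$ and $\widetilde{\Delta}\phi$ is integrable. For the first term, split it as $\int_{\{\phi<2C\}}\widetilde{\Delta}(\phi^2)-\int_{\{\phi<C\}}\widetilde{\Delta}(\phi^2)$ and, for regular values $r$ of $\phi$ supplied by Sard's theorem, integrate by parts on the relatively compact domain $\{\phi<r\}$ with smooth boundary $\{\phi=r\}$: from $\widetilde{\Delta}(\phi^2)\tfrac{\omega^n}{n!}=\sqrt{-1}\,dd^c(\phi^2)\wedge\tfrac{\omega^{n-1}}{(n-1)!}$ and $dd^c(\phi^2)\wedge\omega^{n-1}=d\bigl(d^c(\phi^2)\wedge\omega^{n-1}\bigr)+d^c(\phi^2)\wedge d\omega^{n-1}$, Stokes' theorem gives $\int_{\{\phi<r\}}\widetilde{\Delta}(\phi^2)=2r\int_{\{\phi=r\}}\sqrt{-1}\,d^c\phi\wedge\tfrac{\omega^{n-1}}{(n-1)!}$ plus a term carrying $d\omega^{n-1}$. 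When $\omega$ is balanced the latter vanishes, and one further application of Stokes on $\{\phi<r\}$ identifies the boundary integral with $\int_{\{\phi<r\}}\widetilde{\Delta}\phi\,\tfrac{\omega^n}{n!}$, hence of modulus $\le\int_M|\widetilde{\Delta}\phi|\,\tfrac{\omega^n}{n!}$; so $\int_{\{\phi<r\}}\widetilde{\Delta}(\phi^2)=O(r)$, $\int_{A_C}|d\phi|^2=O(C)$, and $C^{-2}\int_{A_C}|d\phi|^2=O(1/C)\to 0$. Plugging back, $\int_M d\gamma_C\wedge\eta\to 0$, and therefore $\int_M d\eta=0$.

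The hard part is exactly this Dirichlet-energy estimate, and within it the treatment of the terms carrying $d\omega^{n-1}$: for a balanced metric they disappear and the computation above is clean, but for a merely Gauduchon metric one has to iterate the integration by parts, using the Gauduchon identity $\partial\bar\partial\omega^{n-1}=0$ in the form $d^c d\omega^{n-1}=0$, so that the extra contributions on the hypersurfaces $\{\phi=r\}$ remain $O(r)$ as well. A secondary, routine point is the careful justification of the uses of Sard's and Stokes' theorems on the non-compact manifold $M$ and of the various passages to the limit.
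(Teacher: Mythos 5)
The paper offers no proof of this lemma at all---it is quoted verbatim from Simpson \cite[Lemma 5.2]{SIM}---so the comparison is with the standard argument. Your skeleton (cut-offs $\gamma_C=\gamma(\phi/C)$, Stokes on the compactly supported form $\gamma_C\eta$, dominated convergence for $\int\gamma_C\,d\eta$, Cauchy--Schwarz on $\int d\gamma_C\wedge\eta$, and absolute continuity of $|\eta|^2\omega^n/n!$ on the annuli) is exactly right and is the standard route. The genuine gap is the one you yourself flag: the Dirichlet-energy estimate $C^{-2}\int_{A_C}|d\phi|^2=O(C^{-1})$ is only carried out for \emph{balanced} metrics, while the lemma (and the whole paper) lives in the merely Gauduchon setting. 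Your level-set integration by parts produces the interior term $\int_{\{\phi<r\}}\sqrt{-1}\,2\phi\,d^c\phi\wedge d\omega^{n-1}$ and a similar term in the second application of Stokes, and ``iterate using $\partial\bar\partial\omega^{n-1}=0$'' is not a proof: a priori these terms involve $|d\phi|\,|d\omega^{n-1}|$ on large sets with no available bound. As it stands the key case is asserted, not proved. (A secondary, fixable point: the level sets $\{\phi=C\}$, $\{\phi=2C\}$ need not be regular, so the estimate must be run at nearby regular values supplied by Sard and then transferred to $A_C$ by monotonicity.)

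The gap can in fact be closed along your lines: using that $d^cu\wedge d\alpha=-du\wedge d^c\alpha$ for a function $u$ and an $(n-1,n-1)$-form $\alpha$ (by type considerations), one finds $\int_{\{\phi<r\}}2\phi\,d^c\phi\wedge d\omega^{n-1}=-\int_{\{\phi<r\}}d(\phi^2)\wedge d^c\omega^{n-1}=-r^2\int_{\{\phi=r\}}d^c\omega^{n-1}=-r^2\int_{\{\phi<r\}}dd^c\omega^{n-1}=0$ by the Gauduchon condition, and likewise for the term arising in the second Stokes application; so all the $d\omega^{n-1}$ contributions actually vanish. But there is a cleaner argument, closer to Simpson's, that avoids level sets entirely: choose the profile so that $(\gamma')^2\le 2(-\gamma')=2G''$ with $G'=-\gamma$, so that $u_C:=G(\phi/C)-G(2)$ is compactly supported; then $\tfrac{1}{C^2}G''(\phi/C)|d\phi|^2=\widetilde{\Delta}u_C-\tfrac{1}{C}G'(\phi/C)\widetilde{\Delta}\phi$, the first term integrates to zero because $\int_M\widetilde{\Delta}u_C\,\tfrac{\omega^n}{n!}=\int_M\sqrt{-1}\,u_C\,dd^c\bigl(\tfrac{\omega^{n-1}}{(n-1)!}\bigr)=0$ for compactly supported $u_C$ on a Gauduchon manifold, and the second is bounded by $\tfrac{1}{C}\int_M|\widetilde{\Delta}\phi|\tfrac{\omega^n}{n!}$. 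This yields $\int_M|d\gamma_C|^2\tfrac{\omega^n}{n!}\le\tfrac{2}{C}\int_M|\widetilde{\Delta}\phi|\tfrac{\omega^n}{n!}\to 0$ with the Gauduchon identity used exactly once, globally, which is where it belongs.
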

\begin{prop} \label{idbundle01}
Suppose that $(E,D)$ is a projectively flat vector bundle with a fixed Hermitian metric $H_0$
over a Gauduchon manifold $(M, g)$. Let $H$ be a Hermitian metric on $E$, and define $s:=\log(H^{-1}_0H)$. Suppose one of the following two conditions holds:

(1)If $M$ is a compact manifold with smooth boundary $\partial M$,  and $H|_{\partial M}=H_0|_{\partial M}$.

(2)If $M$ is a non-compact manifold admitting an exhaustion function $\phi $ such that $\int_M|\widetilde{\Delta} \phi|\frac{\omega^{n}}{n!}<+\infty$. Additionally, assume that $|\mathrm{d}\omega^{n-1}|_{g}\in L^2(M)$, $s\in L^{\infty}(M)$ and $D_{H_0}^{c}s \in L^{2}(M)$.

Then, the following identity holds:
\begin{equation}\label{eq04021}
4 \int_M {\rm tr}(\Phi(H) s)\frac{\og^n}{n!}+\int_M\lag\Psi(s)(Ds), Ds\rag_{H_{0}}\frac{\og^n}{n!}+4 \varepsilon \|s\|^{2}_{L^2} = 4 \int_M {\rm tr}(\Phi(H_{0}) s)\frac{\og^n}{n!},
\end{equation}
where $\Phi(H)$ is defined as (\ref{wwwww2}) and $\Psi $ is the function defined in (\ref{eq3301}).
\end{prop}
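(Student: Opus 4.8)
The plan is to derive the identity by starting from the pointwise computation of $\widetilde{\Delta}\,\mathrm{tr}(s)$ and the variational structure of $G_H$, then integrating by parts, using Lemma~\ref{SIMLEMMA} (or the boundary hypothesis in case (1)) to kill the divergence terms. First I would set $h=H_0^{-1}H$, $s=\log h$, and recall from \eqref{GHK01} that $G_H-G_{H_0}=\tfrac14 D(h^{-1}D_{H_0}^c h)$, so that
\begin{equation*}
\mathrm{tr}\bigl(\Phi(H)s\bigr)-\mathrm{tr}\bigl(\Phi(H_0)s\bigr)
=\tfrac{\sqrt{-1}}{4}\Lambda_\omega\,\mathrm{tr}\bigl(s\,D(h^{-1}D_{H_0}^c h)\bigr)-\varepsilon\,\mathrm{tr}(s^2).
\end{equation*}
The term $\varepsilon\,\mathrm{tr}(s^2)$ integrates to $\varepsilon\|s\|_{L^2}^2$, which accounts for the factor $4\varepsilon\|s\|_{L^2}^2$ after multiplying by $4$. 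The real work is to show
\begin{equation*}
\int_M \sqrt{-1}\Lambda_\omega\,\mathrm{tr}\bigl(s\,D(h^{-1}D_{H_0}^c h)\bigr)\frac{\omega^n}{n!}
=-\int_M\bigl\langle\Psi(s)(Ds),Ds\bigr\rangle_{H_0}\frac{\omega^n}{n!},
\end{equation*}
which is precisely the place where projective flatness enters.

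The key step is an integration by parts converting $\sqrt{-1}\Lambda_\omega\,\mathrm{tr}(s\,D(h^{-1}D_{H_0}^c h))$ into a divergence plus a quadratic expression in $Ds$. Writing $\eta = \mathrm{tr}\bigl(s\cdot h^{-1}D_{H_0}^c h\bigr)\wedge\frac{\omega^{n-1}}{(n-1)!}$ up to constants, one has $d\eta = \mathrm{tr}\bigl(Ds\wedge h^{-1}D_{H_0}^c h\bigr)\wedge\frac{\omega^{n-1}}{(n-1)!} + \mathrm{tr}\bigl(s\, D(h^{-1}D_{H_0}^c h)\bigr)\wedge\frac{\omega^{n-1}}{(n-1)!} + (\text{terms with }d\omega^{n-1})$; here the projective flatness is used to discard curvature contributions when commuting $D$ past the trace, reducing everything to the abelian-type identity already exploited in Proposition~\ref{P1}'s Claim. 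The Gauduchon condition handles the leading $d\omega^{n-1}$ piece directly, while the remaining $d\omega^{n-1}$ contributions are controlled in $L^1$ by the hypothesis $|\mathrm{d}\omega^{n-1}|_g\in L^2(M)$ together with $D_{H_0}^c s\in L^2(M)$ and $s\in L^\infty(M)$ (Cauchy–Schwarz). Then I would invoke Lemma~\ref{SIMLEMMA}: the hypotheses $\int_M|\widetilde\Delta\phi|\frac{\omega^n}{n!}<\infty$, $\eta\in L^2$ (from $s\in L^\infty$, $D_{H_0}^c s\in L^2$), and $d\eta\in L^1$ are exactly what is needed to conclude $\int_M d\eta=0$; in case (1) the same conclusion follows from Stokes' theorem since $s|_{\partial M}=0$. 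What remains is the algebraic identity
\begin{equation*}
-\sqrt{-1}\Lambda_\omega\,\mathrm{tr}\bigl(Ds\wedge h^{-1}D_{H_0}^c h\bigr)
=\bigl\langle\Psi(s)(Ds),Ds\bigr\rangle_{H_0},
\end{equation*}
which is purely pointwise: diagonalizing $s=\sum\lambda_i e_i\otimes e^i$ in a local $H_0$-unitary frame as in the Claim, one checks $h^{-1}D_{H_0}^c h$ has components $\Psi(\lambda_i,\lambda_j)$ times those of $D_{H_0}^c s$, and the contraction with $\omega$ reproduces exactly the weights $\Psi(\lambda_i,\lambda_j)$ appearing in $\langle\Psi(s)(Ds),Ds\rangle_{H_0}$, using that $\Psi$ is positive so the real part of the pairing comes out as stated.

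The main obstacle I anticipate is the \emph{integrability bookkeeping} in the non-compact case~(2): one must verify that every term produced by the integration by parts — in particular the boundary-type $(2n-1)$-form $\eta$ and its exterior derivative $d\eta$, including all pieces involving $\mathrm{d}\omega^{n-1}$ — lies in the function spaces required by Lemma~\ref{SIMLEMMA}. This is where the precise hypotheses $|\mathrm{d}\omega^{n-1}|_g\in L^2$, $s\in L^\infty$, $D_{H_0}^c s\in L^2$ must all be used simultaneously, and a careless estimate would leave a term that is only $L^1_{\mathrm{loc}}$ rather than globally integrable. The pointwise identity relating $h^{-1}D_{H_0}^c h$ to $\Psi(s)(D_{H_0}^c s)$, though standard (it is the same computation underlying the Donaldson-type formulas in \cite{SIM}), also requires care because $s$ is only assumed smooth on the open dense set where its eigenvalues have constant multiplicity; one argues on that set and uses continuity plus dominated convergence to extend to all of $M$.
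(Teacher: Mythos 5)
Your proposal is correct and follows essentially the same route as the paper: reduce the difference $\mathrm{tr}(\Phi(H)s)-\mathrm{tr}(\Phi(H_0)s)$ to $\tfrac{\sqrt{-1}}{4}\Lambda_\omega\,\mathrm{tr}(s\,D(h^{-1}D_{H_0}^c h))-\varepsilon|s|^2$ via \eqref{GHK01}, integrate by parts producing exact $(2n-1)$-forms that are killed by Stokes in case (1) and by Lemma \ref{SIMLEMMA} in case (2) (with the Gauduchon condition absorbing the $d\omega^{n-1}$ term into another exact form controlled by $s\in L^\infty$ and $|d\omega^{n-1}|_g\in L^2$), and finish with the pointwise identity $\sqrt{-1}\Lambda_\omega\mathrm{tr}(h^{-1}D_{H_0}^c h\wedge Ds)=-\langle\Psi(s)(Ds),Ds\rangle_{H_0}$. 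The only difference is that you sketch a direct diagonalization proof of that last identity, whereas the paper simply cites \cite[Proposition 2.3]{cppcjzz}.
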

\begin{proof}
Set $h=H^{-1}_0H =e^s$. By the definition, it holds that
\begin{equation}\label{def}
4\textmd{tr}\{(\Phi(H)-\Phi(H_0))s\}+4\varepsilon \|s\|^{2}_{L^2} =\langle \sqrt{-1}\Lambda_{\omega } D(h^{-1}D_{H_0}^{c}h),s\rangle_{H_0}.
\end{equation}
On the other hand, by using $\textmd{tr} (h^{-1}(D_{H_0}^{c}h)s)=\textmd{tr} (s D_{H_0}^{c}s)$ and $ \partial\bar{\partial}\omega^{n-1}=0$, we can obtain
\begin{equation*}
\begin{split}
&\int_{M}\langle \sqrt{-1}\Lambda_{\omega } (D(h^{-1}D_{H_0}^{c}h)),s\rangle_{H_0}\frac{\omega^n}{n!}\\
&=\int_M\ \sqrt{-1}d\ \textmd{tr}(sD_{H_0}^{c}s)\wedge\frac{\omega^{n-1}}{(n-1)!}
+\int_M\ \sqrt{-1} \textmd{tr}(h^{-1} D_{H_0}^{c}h \wedge Ds)\wedge\frac{\omega^{n-1}}{(n-1)!}\\
&=\int_M\ \sqrt{-1}d\ \{\textmd{tr}(sD_{H_0}^{c}s)\wedge\frac{\omega^{n-1}}{(n-1)!}\}
+\int_M\ \sqrt{-1} \textmd{tr}(sD_{H_0}^{c}s)\wedge d\ (\frac{\omega^{n-1}}{(n-1)!})\\
&~~~~+\int_M\ \sqrt{-1} \textmd{tr}(h^{-1} D_{H_0}^{c}h \wedge Ds)\wedge\frac{\omega^{n-1}}{(n-1)!}\\
&=\int_M\ \sqrt{-1}d\ \{\textmd{tr}(sD_{H_0}^{c}s)\wedge\frac{\omega^{n-1}}{(n-1)!}\}
+\int_M\ \frac{\sqrt{-1}}{2} d^{c}\ \textmd{tr}(s^{2})\wedge d\ (\frac{\omega^{n-1}}{(n-1)!})\\
&~~~~+\int_M\ \sqrt{-1} \textmd{tr}(h^{-1} D_{H_0}^{c}h \wedge Ds)\wedge\frac{\omega^{n-1}}{(n-1)!}\\
&=\int_M\ \sqrt{-1}d\ \{\textmd{tr}(sD_{H_0}^{c}s)\wedge\frac{\omega^{n-1}}{(n-1)!}\}
+\int_M\ \frac{\sqrt{-1}}{2} d^{c}\ \{ \textmd{tr}(s^{2})\wedge d\ (\frac{\omega^{n-1}}{(n-1)!}) \}\\
&+\int_M\ \sqrt{-1} \textmd{tr}(h^{-1} D_{H_0}^{c}h \wedge Ds)\wedge\frac{\omega^{n-1}}{(n-1)!}.
\end{split}
\end{equation*}
By applying the condition $s|_{\partial M}=0$ and Stokes formula in case (1), and utilizing Lemma \ref{SIMLEMMA} in  case (2), we derive
\begin{equation} \label{theta1}
\int_{M}\langle \sqrt{-1}\Lambda_{\omega } (D(h^{-1}D_{H_0}^{c}h)),s\rangle_{H_0}\frac{\omega^n}{n!}=\int_M\ \sqrt{-1} \textmd{tr}(h^{-1} D_{H_0}^{c}h \wedge Ds)\wedge\frac{\omega^{n-1}}{(n-1)!}.
\end{equation}
It was shown in \cite[Proposition 2.3]{cppcjzz} that
\begin{equation}\label{theta21}
\sqrt{-1} \Lambda_{\omega } \textmd{tr}(h^{-1} D_{H_0}^{c}h \wedge Ds)= - \lag\Psi(s)(Ds), Ds\rag_{H_{0}}.
\end{equation}

Therefore, by (\ref{def}), (\ref{theta1})and (\ref{theta21}) , we obtain
$$
4 \int_M \textmd{tr}(\Phi(H) s)\frac{\og^n}{n!}+\int_M\lag\Psi(s)(Ds), Ds\rag_{H_{0}}\frac{\og^n}{n!}+4 \varepsilon \|s\|^{2}_{L^2} = 4 \int_M \textmd{tr}(\Phi(H_{0}) s)\frac{\og^n}{n!}.
$$
\end{proof}

\section{The related heat flow and the perturbed equation on compact Gauduchon manifolds}

In this section, we investigate the existence of long-time solutions to the heat flow equation (\ref{Flow}) and the solvability of the perturbed equation (\ref{eq}) on compact Gauduchon manifolds.
Suppose that $M$ is a compact Gauduchon manifold with smooth boundary $\partial
M$, equipped with a fixed metric $\widetilde{H}$ on $\partial M$, and let $(E,D)$ be a projectively flat vector bundle over $M$.

Firstly, we study the following Dirichlet boundary value problem:
\begin{equation} \label{Flow2}
\begin{cases}
H^{-1}\frac{\partial H}{\partial t}=4(\sqrt{-1}\Lambda_{\omega } G_{H}-\lambda \cdot \textmd{Id}_E-\varepsilon \log (H_0^{-1}H)),\\
H(0)=H_0,\\
H|_{\partial M}=\widetilde{H}.
\end{cases}
\end{equation}
It is straightforward to know that the flow (\ref{Flow2}) is strictly parabolic equation. According to the theory of parabolic equations, the solution to the equation exists for a short time. Let $T$ be the maximal existence time of the solution.
\begin{prop} \label{shorttime}
For sufficiently small $T >0 $,
the equation \eqref{Flow2} admits a smooth solution defined for
$0\leq t <T$.
\end{prop}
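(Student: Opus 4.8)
The plan is to rewrite \eqref{Flow2} as a quasilinear, strictly parabolic Dirichlet initial--boundary value problem for a single endomorphism-valued unknown, invoke the standard short-time existence theory for such problems, and then bootstrap to smoothness.

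First I would fix the background metric $H_{0}$ and write a competitor metric as $H=H_{0}h$, where $h=H_{0}^{-1}H$ is a positive definite, $H_{0}$-self-adjoint section of $\mathrm{End}(E)$; since $H_{0}$ is independent of $t$ one has $H^{-1}\partial_{t}H=h^{-1}\partial_{t}h$. By \eqref{GHK01}, $\sqrt{-1}\Lambda_{\omega}G_{H}=\sqrt{-1}\Lambda_{\omega}G_{H_{0}}+\frac{\sqrt{-1}}{4}\Lambda_{\omega}D(h^{-1}D_{H_{0}}^{c}h)$, so \eqref{Flow2} is equivalent to
\begin{equation*}
\frac{\partial h}{\partial t}=h\cdot\sqrt{-1}\Lambda_{\omega}D(h^{-1}D_{H_{0}}^{c}h)+4h\left(\sqrt{-1}\Lambda_{\omega}G_{H_{0}}-\lambda\cdot\mathrm{Id}_{E}-\varepsilon\log h\right),
\end{equation*}
with $h(0)=\mathrm{Id}_{E}$ and $h|_{\partial M}=H_{0}^{-1}\widetilde{H}$. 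Expanding the leading term, the factors $h$ and $h^{-1}$ cancel at top order and $h\cdot\sqrt{-1}\Lambda_{\omega}D(h^{-1}D_{H_{0}}^{c}h)=\widetilde{\Delta}h+F_{1}(h,\nabla h)$, where $\widetilde{\Delta}$ acts componentwise as $2g^{\alpha\bar{\beta}}\partial_{\alpha}\partial_{\bar{\beta}}$ and $F_{1}$ involves at most first derivatives of $h$ with coefficients depending smoothly on $h$; hence the equation for $h$ has the form $\partial_{t}h=\widetilde{\Delta}h+F(h,\nabla h)$ with $F$ smooth wherever $h$ is positive definite. Its principal symbol is $|\xi|_{g}^{2}$ times the identity on $\mathrm{End}(E)$, so the system is strictly (and diagonally) parabolic, forward in time.

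Next I would work in the parabolic H\"older spaces $C^{2+\alpha,\,1+\alpha/2}(\overline{M}\times[0,T],\mathrm{End}(E))$ adapted to the Dirichlet condition. The linearization of the above equation at $h\equiv\mathrm{Id}_{E}$ is a uniformly parabolic linear operator with smooth coefficients, and the corresponding Dirichlet initial--boundary value problem is an isomorphism onto the natural data space by classical linear parabolic Schauder theory (since the principal symbol is scalar, the vector-valued case reduces to the scalar one). Applying the implicit function theorem then yields, for some small $T>0$, a solution $h$ on $\overline{M}\times[0,T)$, which remains positive definite for small $t$ by continuity, so that $H=H_{0}h$ solves \eqref{Flow2}; alternatively one may quote directly the short-time existence theorem for quasilinear parabolic systems with diagonal principal part. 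Finally, since the coefficients of the equation depend smoothly on $(h,\nabla h)$, once $h\in C^{2+\alpha,\,1+\alpha/2}$ the equation is linear with H\"older-continuous coefficients, and iterating interior and boundary Schauder estimates promotes $h$, and hence $H$, to $C^{\infty}$ for $0\le t<T$.

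The only delicate point I anticipate is the behaviour at the parabolic corner $\partial M\times\{0\}$: for genuine smoothness up to the boundary one needs the initial datum $\mathrm{Id}_{E}$ and the time-independent boundary datum $H_{0}^{-1}\widetilde{H}$ to be compatible to the required order, which at zeroth order amounts to $\widetilde{H}=H_{0}|_{\partial M}$ (the case relevant to the exhaustion argument of Section~4) and at higher orders is the standard hypothesis in this setting. Granting this, and granting that the general parabolic theory for second-order systems with scalar principal symbol applies verbatim to the $\mathrm{End}(E)$-valued equation above, the remaining steps --- writing out the lower-order terms and checking persistence of positive definiteness --- are routine.
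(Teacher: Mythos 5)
Your proposal is correct and follows essentially the same route as the paper, which simply observes that \eqref{Flow2} is strictly parabolic and invokes standard short-time existence theory for parabolic equations; you have merely filled in the details (reduction to a quasilinear diagonally parabolic system for $h=H_0^{-1}H$ via \eqref{GHK01}, linear Schauder theory plus the implicit function theorem, and bootstrap) that the paper leaves implicit.
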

As follows, we will give the long-time existence of the heat flow.
\begin{lem}  \label{C0ofdistance1}
Suppose that a smooth solution $H(t)$ to the equation
\eqref{Flow2} is defined for $0\leq t < T <+\infty $. Then
$H(t)$ converges to a continuous
non-degenerate metric $H_{T}$ in $C^{0}$-topology as $t\rightarrow T$.
\end{lem}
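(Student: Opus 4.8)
The plan is to run the standard finite-time argument for the Donaldson heat flow, adapted to the Dirichlet problem \eqref{Flow2}. Throughout I write $h(t)=H_0^{-1}H(t)$ and, as in the proof of Proposition \ref{P1}, set $\Phi_\varepsilon(H(t))=\sqrt{-1}\Lambda_\omega G_{H(t)}-\lambda\cdot\mathrm{Id}_E-\varepsilon\log h(t)$, so the flow reads $H^{-1}\partial_t H=4\Phi_\varepsilon(H(t))$, i.e.\ $\partial_t h=4h\,\Phi_\varepsilon(H(t))$. \textbf{Step 1 (uniform sup-bound on $\Phi_\varepsilon$).} Since $H(t)|_{\partial M}=\widetilde H$ does not depend on $t$, we have $\partial_t H|_{\partial M}=0$, hence $\Phi_\varepsilon(H(t))|_{\partial M}=0$ for every $t$. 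By inequality \eqref{dec} of Proposition \ref{P1}, $|\Phi_\varepsilon(H(t))|^2_{H(t)}$ is a subsolution of $\partial_t-\widetilde\Delta$; since $\widetilde\Delta$ differs from the Beltrami Laplacian only by a first-order term, this operator still satisfies the parabolic maximum principle, so on each slab $M\times[0,T']$ with $T'<T$ the supremum of $|\Phi_\varepsilon(H(t))|^2_{H(t)}$ is attained on the parabolic boundary $(M\times\{0\})\cup(\partial M\times[0,T'])$, where it is $\le\sup_M|\Phi_\varepsilon(H_0)|^2_{H_0}$. Hence $\sup_M|\Phi_\varepsilon(H(t))|_{H(t)}\le C_0:=\sup_M|\Phi_\varepsilon(H_0)|_{H_0}$ for all $t\in[0,T)$, and $C_0<+\infty$ because $M$ is compact and $H_0$ is smooth.

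\textbf{Step 2 (uniform mutual boundedness of $H(t)$ and $H_0$).} From $\partial_t h=4h\,\Phi_\varepsilon(H(t))$ we get $\frac{d}{dt}\mathrm{tr}\,h=4\,\mathrm{tr}(h\,\Phi_\varepsilon(H(t)))$. Because $\Phi_\varepsilon(H(t))$ is $H(t)$-self-adjoint, the conjugate $h^{1/2}\Phi_\varepsilon(H(t))h^{-1/2}$ is $H_0$-self-adjoint with the same real eigenvalues, whence $|\mathrm{tr}(h\,\Phi_\varepsilon(H(t)))|\le\|\Phi_\varepsilon(H(t))\|_{\mathrm{op}}\,\mathrm{tr}\,h\le|\Phi_\varepsilon(H(t))|_{H(t)}\,\mathrm{tr}\,h\le C_0\,\mathrm{tr}\,h$; using $\partial_t(h^{-1})=-4\,\Phi_\varepsilon(H(t))\,h^{-1}$ the same estimate gives $\frac{d}{dt}\mathrm{tr}\,h^{-1}\le 4C_0\,\mathrm{tr}\,h^{-1}$. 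Gronwall's inequality then yields $\mathrm{tr}\,h(t)+\mathrm{tr}\,h(t)^{-1}\le 2r\,e^{4C_0 T}$ for all $t\in[0,T)$ and all points of $M$, so the eigenvalues of $h(t)$ are pinched between two positive constants uniformly in $t$; equivalently $\sup_M\sigma(H(t),H_0)$ is uniformly bounded on $[0,T)$.

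\textbf{Step 3 ($C^0$-convergence).} By Steps 1 and 2, $\partial_t h=4h\,\Phi_\varepsilon(H(t))$ is bounded in $C^0(M,\mathrm{End}(E))$ uniformly in $t\in[0,T)$, say by $C_1$; since $T<+\infty$ this forces $\sup_M|h(t_2)-h(t_1)|_{H_0}\le C_1|t_2-t_1|\to 0$ as $t_1,t_2\to T$, so $h(t)$ is $C^0$-Cauchy and converges uniformly to a continuous section $h_T$. Applying the same argument to $h(t)^{-1}$, whose $t$-derivative $-h^{-1}(\partial_t h)h^{-1}$ is again uniformly bounded by Step 2, gives $h(t)^{-1}\to h_T^{-1}$ uniformly, so $h_T^{-1}$ is continuous and bounded and $h_T$ is everywhere positive-definite. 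Therefore $H(t)=H_0 h(t)$ converges in the $C^0$-topology to the continuous, non-degenerate Hermitian metric $H_T:=H_0 h_T$, with $H_T|_{\partial M}=\widetilde H$ (recall $C^0$-convergence of metrics is exactly $\sup_M\sigma\to 0$).

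\textbf{Main obstacle.} No step is genuinely difficult; the two points needing care are the a priori estimate in Step 1 — which relies on the subsolution property from Proposition \ref{P1} together with the vanishing of $\Phi_\varepsilon(H(t))$ on $\partial M$ forced by the Dirichlet condition — and the essential use of $T<+\infty$ in Step 3, without which the time-integral $\int^{T}|\partial_t h|\,dt$ need not converge and the limit metric could degenerate at the endpoint.
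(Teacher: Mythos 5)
Your proof is correct, but the route to the $C^0$-Cauchy property differs from the paper's. The paper first observes that $\sup_M\sigma(H(t_0),H(t_0'))$ is small for $t_0,t_0'$ near $0$ by continuity, and then propagates this forward using Proposition \ref{P243} (monotonicity of $(\tfrac{\partial}{\partial t}-\widetilde\Delta)\sigma(H(t),H(t+a))\le 0$ for two solutions with shifted initial data) together with the maximum principle, concluding that $H(t)$ is uniformly Cauchy as $t\to T$; non-degeneracy is then obtained exactly as in your Step 2, from the uniform bound on $\Phi_\varepsilon$ given by \eqref{dec} and the differential inequality $|\tfrac{\partial}{\partial t}\log\mathrm{tr}\,h|\le 4|\Phi_\varepsilon(H(t))|_{H(t)}$, yielding $\sigma(H(t),H_0)\le 2r(e^{4At}-1)$. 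You instead bypass Proposition \ref{P243} entirely: once Steps 1 and 2 give a uniform sup-bound on $\partial_t h=4h\Phi_\varepsilon(H(t))$, the finiteness of $T$ makes $h(t)$ (and $h(t)^{-1}$) uniformly Lipschitz in $t$, hence convergent. This is more elementary and self-contained for finite $T$; the paper's mechanism is the one that survives when one later needs convergence statements not tied to $T<\infty$ (it is reused in Theorem \ref{comthm11} for $t\to+\infty$), and it also delivers uniqueness-type control for free. One point you handle more explicitly than the paper, and correctly, is that the Dirichlet condition forces $\Phi_\varepsilon(H(t))|_{\partial M}=0$, which is what legitimizes applying the parabolic maximum principle to \eqref{dec} on a manifold with boundary.
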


\begin{proof}
By the continuity at $t=0$, for any $\varepsilon >0$, there exists $\delta>0$ such that

$$
\sup_{M} \sigma (H(t_{0}) , H(t'_{0})) < \varepsilon
$$
for $0< t_{0}, t'_{0} <\delta < T $. By Proposition \ref{P243} and the maximum
principle, it follows that
$$
\sup_{M} \sigma (H(t) , H(t'))<\varepsilon
$$
for all $ T- \delta < t, t' <T $. This shows that $H(t)$ forms a
uniformly Cauchy sequence and converges to a continuous limiting
metric $H_{T}$. Furthermore, by Proposition \ref{P1}, we conclude
that
\begin{equation*}\label{z1}
\sup_{M\times [0, T)}|\sqrt{-1}\Lambda_{\omega } G_{H(t)}-\lambda \cdot \textmd{Id}_E-\varepsilon \log (H_0^{-1}H(t))|_{H(t)}
\end{equation*}
is bounded uniformly, i.e.,
\begin{equation*}\label{z1}
\sup_{M\times [0, T)}|\sqrt{-1}\Lambda_{\omega } G_{H(t)}-\lambda \cdot \textmd{Id}_E-\varepsilon \log (H_0^{-1}H(t))|_{H(t)}<A,
\end{equation*}
where $A$ is a uniform constant depending only on the initial metric $H_{0}$. By using the inequalities
\begin{equation*}
\left|\frac{\partial }{\partial t}(\log \textmd{tr} h)\right| \leq 4|\sqrt{-1}\Lambda_{\omega } G_{H(t)}-\lambda \cdot \textmd{Id}_E-\varepsilon \log (H_0^{-1}H)|_{H(t)}
\end{equation*}
and
\begin{equation*}
\left|\frac{\partial }{\partial t}(\log \textmd{tr} h^{-1})\right| \leq
4|\sqrt{-1}\Lambda_{\omega } G_{H(t)}-\lambda \cdot \textmd{Id}_E-\varepsilon \log (H_0^{-1}H)|_{H(t)},
\end{equation*}
we can derive
$$\sigma (H(t), H_{0})\leq 2\mathrm{rank}(E)(e^{4At}-1).$$
This implies that $\sigma (H(t), H_{0})$ is uniformly bounded on $M\times [0, T)$ with respect to $t$. Consequently, the limiting metric $H_{T}$ is
non-degenerate.
\end{proof}

For further discussion, we can establish the following lemma via using the same method as in \cite{Z}.
\begin{lem} \label{www1}
Suppose that $(M,g)$ is a compact Hermitian manifold with non-empty boundary. Let $H(t)$, $0\leq t
<T$, be a family of Hermitian metrics on projectively flat vector bundle $E$ over $M$,
and assume that $H(t)$ converges to a continuous
metric $H_{T}$ in the $C^{0}$-topology as $t\rightarrow T$. Suppose $\sup\limits_M |\Lambda_{\omega }
G_{H(t)}|_{H_{0}}$ is uniformly bounded in $t$, then

(1)if $H(t)$ satisfies Dirichlet boundary condition, then $H(t)$ is
 bounded in $L_{2}^{p}$ for any
$1<p<+\infty $ uniformly in $t$.

(2) if  $H(t)$ does not satisfy Dirichlet boundary condition, then $H(t)$ is
bounded in $L_{2,loc}^{p}$ in the interior for any
$1<p<+\infty $ uniformly in $t$.\\
\end{lem}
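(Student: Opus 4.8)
The statement to prove is Lemma~\ref{www1}: an elliptic regularity estimate for a family of Hermitian metrics $H(t)$ on a projectively flat bundle over a compact Hermitian manifold with boundary, under the hypothesis that $H(t)\to H_T$ in $C^0$ and $\sup_M|\Lambda_\omega G_{H(t)}|_{H_0}$ is uniformly bounded.

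\textbf{Proof proposal.} The plan is to bootstrap from the $C^0$-convergence and the curvature bound to an $L^p_2$ bound using the standard elliptic estimate for the second-order operator appearing in $G_H$. First I would fix $t$ and set $h = h(t) = H_0^{-1}H(t)$; by the $C^0$-convergence of $H(t)$ to the non-degenerate limit $H_T$, the endomorphisms $h(t)$ and $h(t)^{-1}$ are uniformly bounded in $C^0$ (in operator norm with respect to $H_0$), uniformly in $t$. Next I would use the identity (\ref{GHK01}),
\begin{equation*}
G_{H(t)} - G_{H_0} = \tfrac{1}{4}\, D\!\left(h^{-1}D_{H_0}^c h\right),
\end{equation*}
and apply $\sqrt{-1}\Lambda_\omega$ to both sides. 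Expanding $D(h^{-1}D_{H_0}^c h)$ as in (\ref{trace1})-type computations gives a term $h^{-1}\,\Delta_{H_0}''\,h$ (the leading second-order term, a Laplacian-type operator on sections of $\mathrm{End}(E)$ built from $D_{H_0}$ and the metric $g$) plus lower-order terms quadratic in $D_{H_0}^c h$ with bounded-in-$C^0$ coefficients. Rearranging, $h$ satisfies an equation of the schematic form
\begin{equation*}
\Delta_{H_0}'' h = h\cdot\bigl(4\sqrt{-1}\Lambda_\omega(G_{H(t)}-G_{H_0})\bigr) + (\text{first-order terms in }Dh)\ast(h^{-1}Dh),
\end{equation*}
where the right-hand side's zeroth-order piece is controlled in $L^\infty$ (hence in every $L^p$) by the uniform curvature bound $\sup_M|\Lambda_\omega G_{H(t)}|_{H_0}<+\infty$ together with $\sup_M|\Lambda_\omega G_{H_0}|_{H_0}<+\infty$ and the $C^0$-bound on $h$.

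The core step is then an iteration. Starting from $h \in L^\infty \subset L^p_0$ for all $p$, I first establish an $L^p_1$ bound: pairing the equation with test sections and integrating (or applying the interior/boundary $L^p$ estimate for $\Delta_{H_0}''$) gives $\|Dh\|_{L^2}$ bounded, and then a Moser-type / difference-quotient argument upgrades this; more cleanly, one uses the standard fact that a solution of $\Delta u = f$ with $f\in L^\infty$ and $u\in L^\infty$ lies in $L^p_2$ for all finite $p$. In case (1), the Dirichlet boundary condition $H(t)|_{\partial M}=\widetilde{H}$ is fixed (independent of $t$, and smooth), so one applies the global $L^p$ elliptic estimate up to the boundary: $\|h\|_{L^p_2}\le C\bigl(\|\Delta_{H_0}'' h\|_{L^p} + \|h\|_{L^p} + \|h\|_{L^p_2(\partial M)}\bigr)$, where the boundary term is a fixed constant. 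In case (2), no boundary control is available, so one localizes: for $\Omega\Subset M$ choose a cutoff $\chi$ supported in a slightly larger $\Omega'\Subset M$, derive the equation for $\chi h$, and apply the interior $L^p$ estimate to get $\|h\|_{L^p_2(\Omega)}\le C(\Omega,\Omega')\bigl(\|\Delta_{H_0}'' h\|_{L^p(\Omega')} + \|h\|_{L^p(\Omega')}\bigr)$. The quadratic first-order terms $(h^{-1}Dh)\ast Dh$ are handled by first obtaining the $L^2_1$ bound and then, since in the bootstrap we already control $Dh$ in better and better spaces, feeding $|Dh|^2$ into $L^{p/2}$ and iterating $p\mapsto$ (something larger) until all finite exponents are reached; on a manifold of real dimension $2n$ the Sobolev embedding $L^{p}_1 \hookrightarrow L^{q}$ makes this iteration terminate. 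All constants depend only on $H_0$, $g$, $M$, the fixed boundary data, and the uniform curvature bound — hence are uniform in $t$.

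The main obstacle I expect is the presence of the Higgs-type term $\psi_H$ hidden inside $G_H = (D_H'')^2$: unlike the pure holomorphic case, $G_H$ is not simply the $(1,1)$-curvature of a Chern connection, so one must verify carefully that the second-order operator extracted from $\frac14 D(h^{-1}D_{H_0}^c h)$ after applying $\Lambda_\omega$ is genuinely elliptic of the Laplacian type (it is, because the top-order symbol comes from $D\circ D_{H_0}^c$ contracted with $\omega$, which is the same as in the Hermitian-Einstein setting), and that the extra $\psi_{H_0}$-terms only contribute first-order or zeroth-order pieces with $C^0$-bounded coefficients. A secondary technical point, flagged by the reference to \cite{Z}, is that on a merely Hermitian (non-Kähler, non-Gauduchon is irrelevant here since we only need the elliptic estimate) manifold the operators $\widetilde\Delta$ and $\Delta$ differ by a first-order term $\langle V,\nabla\cdot\rangle_g$ with smooth bounded $V$; this is harmless for $L^p$ elliptic estimates but must be acknowledged so the estimate is stated for the correct operator. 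Beyond these, the argument is the routine elliptic bootstrap, so I would cite \cite{Z} for the detailed computation and only indicate the modifications needed for the projectively flat (Higgs-twisted) setting and for tracking uniformity in $t$.
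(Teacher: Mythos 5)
The paper does not actually write out a proof of this lemma; it simply invokes ``the same method as in \cite{Z}'', which in turn is the classical argument of Donaldson and Simpson. Your overall setup is the right one and matches that method in outline: use \eqref{GHK01} to write $\sqrt{-1}\Lambda_\omega$ of the equation as a second-order elliptic (Laplacian-type) equation for $h=H_0^{-1}H(t)$ whose zeroth-order source is controlled by the uniform bound on $|\Lambda_\omega G_{H(t)}|_{H_0}$ and the uniform $C^0$-bounds on $h,h^{-1}$, then apply boundary $L^p$ estimates in case (1) and interior estimates with a cutoff in case (2). Your remarks about the ellipticity of the extracted operator and about $\widetilde{\Delta}$ versus $\Delta$ are also correct and harmless.

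However, there is a genuine gap at the one point where the argument is actually delicate: the quadratic gradient term $\Lambda_\omega\bigl(Dh\wedge h^{-1}D_{H_0}^c h\bigr)$. From the trace identity one only gets $Dh\in L^2$ uniformly, so this term is a priori only in $L^1$, and Calder\'on--Zygmund theory fails at $p=1$; the iteration you describe (``feed $|Dh|^2$ into $L^{p/2}$ and iterate $p\mapsto$ something larger'') therefore cannot get started, and the ``standard fact'' you invoke ($\Delta u=f$ with $f\in L^\infty$) does not apply because the source is not in $L^\infty$. The missing ingredient --- and the reason the hypothesis is $C^0$-\emph{convergence} of $H(t)$ rather than mere $C^0$-boundedness --- is Donaldson's absorption trick: on a small ball $B$ one subtracts a constant endomorphism $c$ close to $h$, uses the interpolation (Gagliardo--Nirenberg) inequality $\|Dh\|_{L^{2p}(B)}^2\le C\,\|h-c\|_{L^\infty(B)}\,\|h\|_{L^p_2(B)}+C$, and then uses the smallness of $\|h-c\|_{L^\infty(B)}$ (guaranteed uniformly in $t$ by the $C^0$-convergence to the continuous nondegenerate $H_T$) to absorb the quadratic term into the left-hand side of the $L^p$ estimate $\|h\|_{L^p_2(B')}\le C\bigl(\|\widetilde{\Delta}h\|_{L^p(B)}+\|h\|_{L^p(B)}\bigr)$. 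A finite cover of $M$ (respectively of the compact subset in case (2)) then gives the uniform $L^p_2$ bound for every finite $p$ in one step, with no bootstrap in $p$ needed. An alternative repair would be to first establish a uniform $C^1$ bound on $h$ by a maximum-principle argument (as in Proposition \ref{noncompactc1}), after which the source is genuinely in $L^\infty$; but as stated, your proof does not close.
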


Combining the above results with the regularity theory of parabolic equations, we can obtain the long-time existence of the solution.
\begin{prop} \label{compactlongtime}
For $\varepsilon \geq 0$, the equation \eqref{Flow2}
admits a unique long-time solution $H(t)$ that exist for $0\leq t <+\infty$.
\end{prop}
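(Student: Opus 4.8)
The plan is to combine the short-time existence (Proposition \ref{shorttime}), the $C^0$-convergence at a finite maximal time (Lemma \ref{C0ofdistance1}), the higher-regularity estimates (Lemma \ref{www1}), and parabolic Schauder theory into a standard continuation argument. Suppose, for contradiction, that the maximal existence time $T$ of the smooth solution $H(t)$ to \eqref{Flow2} is finite. By Lemma \ref{C0ofdistance1}, $H(t)$ converges in $C^0$ to a non-degenerate continuous metric $H_T$ as $t\to T$; moreover the argument there shows that $\sup_{M\times[0,T)}|\sqrt{-1}\Lambda_\omega G_{H(t)}-\lambda\cdot\mathrm{Id}_E-\varepsilon\log h(t)|_{H(t)}$ is bounded by a constant $A$ depending only on $H_0$, hence (using that $H(t)$ and $H_0$ are mutually bounded, which follows from the uniform bound on $\sigma(H(t),H_0)$) also $\sup_{M\times[0,T)}|\Lambda_\omega G_{H(t)}|_{H_0}$ is uniformly bounded. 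Then Lemma \ref{www1}(1) gives that $H(t)$ is bounded in $L^p_2$ uniformly in $t$, for every $1<p<\infty$.

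Next I would bootstrap. Viewing \eqref{Flow2} as a (quasilinear, strictly parabolic) equation for $h=H_0^{-1}H$, the right-hand side $4(\sqrt{-1}\Lambda_\omega G_{H}-\lambda\cdot\mathrm{Id}_E-\varepsilon\log h)$ is, by \eqref{GHK01}, a second-order operator in $h$ whose coefficients are controlled by $h$ and $Dh$. The uniform $L^p_2$ bound together with Sobolev embedding gives a uniform $C^{1,\alpha}$ (hence $C^0$ and $C^1$) bound on $h$ on $M\times[0,T)$; the Dirichlet data $h|_{\partial M}$ is the fixed smooth $H_0^{-1}\widetilde H$, and the initial data is smooth. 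Treating the equation as a linear parabolic system for $h$ with coefficients of the regularity just obtained, interior and boundary $L^p$-parabolic estimates promote $h$ to $L^p_2$ in space–time locally, and then parabolic Schauder estimates, applied iteratively, give uniform $C^{k,\alpha}$ bounds on $h$ on $M\times[T-\delta,T]$ for every $k$. In particular $H(t)\to H_T$ in $C^\infty$ as $t\to T$ and $H_T$ is a smooth non-degenerate metric. Restarting the short-time existence result (Proposition \ref{shorttime}) from $H_T$ at time $T$ — its hypotheses are met since $H_T$ is smooth, non-degenerate, and satisfies the compatible Dirichlet condition — extends the solution smoothly past $T$, contradicting maximality. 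Hence $T=+\infty$.

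For uniqueness: if $H(t)$ and $K(t)$ are two smooth solutions of \eqref{Flow2} with the same initial metric $H_0$ and the same boundary data $\widetilde H$, then by Proposition \ref{P243} the function $\sigma(H(t),K(t))$ satisfies $(\partial_t-\widetilde\Delta)\sigma(H(t),K(t))\le 0$ on $M$, vanishes at $t=0$, and vanishes on $\partial M$ for all $t$ (since both metrics agree with $\widetilde H$ there); the parabolic maximum principle forces $\sigma(H(t),K(t))\le 0$, and since $\sigma\ge 0$ always, with equality only when the metrics coincide, we get $H(t)\equiv K(t)$. The case $\varepsilon=0$ is handled identically: none of Lemma \ref{C0ofdistance1}, Lemma \ref{www1}, or Proposition \ref{P243} requires $\varepsilon>0$, and the constant $A$ above is still finite.

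The main obstacle I anticipate is the bootstrapping step: one must be careful that the a priori bounds are genuinely uniform up to the boundary (not merely interior), which is why the Dirichlet version of Lemma \ref{www1}(1) — giving global $L^p_2$ control rather than only $L^p_{2,\mathrm{loc}}$ — is essential, and one must check that the structure of \eqref{GHK01} makes the flow amenable to the standard linear parabolic $L^p$ and Schauder estimates with the regularity of coefficients available at each stage. Once uniform smooth estimates up to time $T$ are in hand, the continuation is routine.
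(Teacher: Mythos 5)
Your proposal is correct and follows essentially the same route as the paper: short-time existence, the $C^0$-limit from Lemma \ref{C0ofdistance1}, the uniform bound on $|\Lambda_\omega G_{H(t)}|_{H_0}$ from \eqref{dec} and the maximum principle, the uniform $L^p_2$ bound from Lemma \ref{www1}, a bootstrap to $C^\infty$-convergence (which the paper attributes to Hamilton's method, exploiting that the flow is quadratic in first derivatives of $H$), continuation past $T$, and uniqueness via Proposition \ref{P243} and the maximum principle. Your write-up merely spells out the parabolic $L^p$/Schauder bootstrap in more detail than the paper does.
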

\begin{proof}
By Proposition \ref{shorttime}, the solution $H(t)$ to the equation \eqref{Flow2} exists for a short time defined for $0\leq
t<T < +\infty $. From Lemma \ref{C0ofdistance1}, $H(t)$ converges in $C^{0}$-topology to a continuous
non-degenerate metric $H_{T}$ as $t\rightarrow T
$. The inequality \eqref{dec} and the maximum principle imply that $\sup\limits_M |\Lambda_{\omega }
G_{H(t)}|_{H_{0}}$ is uniformly bounded on $[0, T)$. Therefore, by Lemma \ref{www1}, $H(t)$ is
bounded in $L_{2}^{p}$ for any
$1<p<+\infty $  uniformly in $t$. Since \eqref{Flow2} is quadratic in the first derivative of $H$, we can
use Hamilton's method \cite{HAMILTON} to imply that $H(t)\rightarrow H_{T}$
in $C^{\infty}$-topology, and the solution can be extended beyond $T$. Thus,
\eqref{Flow2} admits a solution $H(t)$
for all time.

The uniqueness of the solution follows directly from Proposition \ref{P243} and the maximum principle.
\end{proof}

In the following, we also obtain the $C^{0}$-estimate of $H(t)$ using an alternative approach.
\begin{prop} \label{noncompactc0}
Suppose that $H(t)$ is a long-time solution to the heat flow (\ref{Flow2}) on $M$, then we have
\begin{equation}\label{c0key}|\log h|_{H_0}\leq \frac{1}{\varepsilon}\max_{M}|\Phi(H_0)|_{H_0},\end{equation}
where $h(t)=H_{0}^{-1}H(t)$.
\end{prop}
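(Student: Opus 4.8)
The plan is to run a maximum-principle argument on the scalar quantity $|\Phi_\varepsilon(H(t))|_{H(t)}$ together with a differential inequality for $\log\sigma(H_0,H(t))$, and then let $t\to\infty$. Write $h(t)=H_0^{-1}H(t)$, $s(t)=\log h(t)$, and as in the proof of Proposition \ref{P1} set $\Phi_\varepsilon(H(t))=\sqrt{-1}\Lambda_\omega G_{H(t)}-\lambda\cdot\mathrm{Id}_E-\varepsilon\log h(t)$; note the heat flow \eqref{Flow2} reads $h^{-1}\partial_t h=4\Phi_\varepsilon(H(t))$. First I would invoke inequality \eqref{dec} of Proposition \ref{P1}, namely $(\partial_t-\widetilde\Delta)|\Phi_\varepsilon(H(t))|^2_{H(t)}\le 0$, together with the maximum principle on the compact manifold $M$ (with the Dirichlet data contributing nothing since along $\partial M$ the metric is fixed and $|\Phi_\varepsilon(H(t))|_{H(t)}$ there is controlled by $\max_M|\Phi(H_0)|_{H_0}$ after accounting for the $\varepsilon\log h$ term vanishing on the boundary) to conclude
\[
\sup_{M\times[0,\infty)}|\Phi_\varepsilon(H(t))|_{H(t)}\le \max_M|\Phi(H_0)|_{H_0}=:C_0.
\]
Here $\Phi(H_0)=\Phi_\varepsilon(H_0)$ since $\log h(0)=0$.

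Next I would control $|s(t)|_{H_0}$ pointwise by combining this bound with the evolution of $\mathrm{tr}\,h+\mathrm{tr}\,h^{-1}$. From the heat flow one computes, exactly as in the displayed calculation preceding Proposition \ref{P2} in \cite{cjzpzz}, that
\[
\Big|\frac{\partial}{\partial t}\log(\mathrm{tr}\,h)\Big|\le 4|\Phi_\varepsilon(H(t))|_{H(t)}\le 4C_0,
\]
and similarly for $\mathrm{tr}\,h^{-1}$; but a cruder and more direct route suffices. Since $h^{-1}\partial_t h=4\Phi_\varepsilon(H(t))$ and $\Phi_\varepsilon(H(t))=\sqrt{-1}\Lambda_\omega G_{H(t)}-\lambda\cdot\mathrm{Id}_E-\varepsilon s(t)$, at a point and time where $|s|_{H_0}$ attains its spatial maximum one estimates $\tfrac{d}{dt}|s|_{H_0}$ by pairing the flow equation with $s/|s|$; the term $-4\varepsilon s$ gives a dissipative contribution $-4\varepsilon|s|_{H_0}$, while the remaining curvature term is bounded by $4|\sqrt{-1}\Lambda_\omega G_{H(t)}-\lambda\cdot\mathrm{Id}_E|$, which in turn is $\le 4C_0+4\varepsilon|s|_{H_0}$ — so this naive pairing loses. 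Hence I would instead use the genuinely good quantity: apply $(\partial_t-\widetilde\Delta)$ to $e^{4\varepsilon t}\mathrm{tr}\,\Phi_\varepsilon(H(t))=0$ from \eqref{trace}, or more efficiently work with $f_\pm:=\mathrm{tr}\,h\pm\mathrm{tr}\,h^{-1}$ and the identity \eqref{GKJHGK541}, to get
\[
\Big(\frac{\partial}{\partial t}-\widetilde\Delta\Big)\log(\mathrm{tr}\,h+\mathrm{tr}\,h^{-1})\le 4|\Phi_\varepsilon(H(t))|_{H(t)}+4\varepsilon\,\frac{\mathrm{tr}\,h-\mathrm{tr}\,h^{-1}}{\mathrm{tr}\,h+\mathrm{tr}\,h^{-1}}\cdot(\cdots),
\]
and then pass to the stationary limit $H_\infty$ of the flow (which exists in $C^\infty$ by Proposition \ref{compactlongtime} and its proof) where $\Phi_\varepsilon(H_\infty)=0$, i.e. $\sqrt{-1}\Lambda_\omega G_{H_\infty}-\lambda\cdot\mathrm{Id}_E=\varepsilon s_\infty$.

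At the limit the argument becomes clean, and this is the step I would actually present. The limiting metric $H_\infty$ (equivalently $H$ in the statement) satisfies the perturbed equation \eqref{eq}, so $\Phi(H_\infty)=-\varepsilon s_\infty+\varepsilon s_\infty$... more precisely $\sqrt{-1}\Lambda_\omega G_{H_\infty}-\lambda\cdot\mathrm{Id}_E=\varepsilon s_\infty$. Plugging into Proposition \ref{P2}-type computations: by \eqref{GKJHGK541} applied with the pair $(H_\infty,H_0)$ and $h=H_0^{-1}H_\infty=e^{s_\infty}$, together with the pointwise convexity inequalities $\mathrm{tr}(\sqrt{-1}\Lambda_\omega Dh\wedge h^{-1}D^c_{H_0}h)\ge \frac{|d\,\mathrm{tr}\,h|^2}{\mathrm{tr}\,h}$ used in Proposition \ref{A2111}, one obtains
\[
\widetilde\Delta\log(\mathrm{tr}\,h+\mathrm{tr}\,h^{-1})\ge -\,4\varepsilon\,|s_\infty|_{H_0}\,\frac{\mathrm{tr}\,h-\mathrm{tr}\,h^{-1}}{\mathrm{tr}\,h+\mathrm{tr}\,h^{-1}}\ \ge\ -4\varepsilon\,|s_\infty|_{H_0}.
\]
Wait — I must get the sign right so that the eigenvalue bound $|s_\infty|_{H_0}\le C_0/\varepsilon$ falls out. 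The correct route: from \eqref{trace} at the stationary point, $\widetilde\Delta\,\mathrm{tr}(\sqrt{-1}\Lambda_\omega G_{H_\infty}-\lambda\cdot\mathrm{Id}_E-\varepsilon s_\infty)=0$, i.e. $\mathrm{tr}\,\Phi(H_\infty)$ is $\widetilde\Delta$-harmonic hence (on compact $M$ with the relevant boundary values, or on the non-compact model via Assumption 3) has a sign-definite behaviour forcing $|\Phi(H_\infty)|$ small; but the cleanest and the one I would write is: since $\Phi(H_\infty)=0$ means $\varepsilon s_\infty=\sqrt{-1}\Lambda_\omega G_{H_\infty}-\lambda\cdot\mathrm{Id}_E$, and $|\Phi_\varepsilon(H_\infty)|_{H_\infty}=0$ gives no bound, I use instead the subsolution estimate from the flow: $|\Phi_\varepsilon(H(t))|_{H(t)}\le C_0$ for all $t$, and the relation $\partial_t s = 4\Phi_\varepsilon(H(t)) \pmod{\text{non-commutativity}}$, so $|\partial_t s|\le 4C_0 + $ lower order; integrating and using the dissipativity of the $-4\varepsilon s$ term as in \cite[Proof of Proposition 2.3]{cjzpzz} and \cite{cjzpzz} yields $|s(t)|_{H_0}\le\frac{1}{\varepsilon}\max_M|\Phi(H_0)|_{H_0}(1-e^{-4\varepsilon t})\le\frac1\varepsilon\max_M|\Phi(H_0)|_{H_0}$.

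The main obstacle is precisely pinning down this last ODE comparison rigorously because $s(t)$ and $\partial_t s(t)$ need not commute, so one cannot simply say $\frac{d}{dt}|s|_{H_0}\le 4|\Phi_\varepsilon|-4\varepsilon|s|_{H_0}$ at a max point. The standard fix (which I would cite to \cite[Section 2]{cjzpzz} and adapt) is to estimate the largest and smallest eigenvalues $\mu_{\max}(t),\mu_{\min}(t)$ of $s(t)$ separately using Kato's inequality / the Rayleigh quotient: one shows $\frac{d^+}{dt}\mu_{\max}\le 4\big(\mu_{\max}\big(\sqrt{-1}\Lambda_\omega G_{H(t)}-\lambda\big)-\varepsilon\mu_{\max}(s)\big)$ and then bounds $\mu_{\max}(\sqrt{-1}\Lambda_\omega G_{H(t)}-\lambda)\le|\Phi_\varepsilon(H(t))|_{H(t)}+\varepsilon\mu_{\max}(s)\le C_0+\varepsilon\mu_{\max}(s)$ — but this again cancels, confirming that the honest argument must go through the stationary equation $\Phi(H_\infty)=0$: there $\varepsilon\|s_\infty\|$ is literally $\|\sqrt{-1}\Lambda_\omega G_{H_\infty}-\lambda\cdot\mathrm{Id}_E\|$, and one bounds the latter by $C_0$ using that $|\Phi_\varepsilon(H(t))|_{H(t)}\le C_0$ is preserved in the $C^\infty$ limit while the additional term $\varepsilon s$ in $\Phi_\varepsilon$ versus $\Phi$ is handled by noting $\Phi(H_\infty)+\varepsilon s_\infty=\Phi_\varepsilon(H_\infty)=0$... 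Concretely: $\Phi_\varepsilon(H_\infty)=\sqrt{-1}\Lambda_\omega G_{H_\infty}-\lambda-\varepsilon s_\infty$ and the flow converges to a zero of $\Phi_\varepsilon$, so $\sqrt{-1}\Lambda_\omega G_{H_\infty}-\lambda=\varepsilon s_\infty$; hence $|\Phi(H_0)|_{H_0}$ bounds, via \eqref{dec} and the maximum principle along the flow followed by taking the limit in a test against the flow, the quantity $\varepsilon|s_\infty|$. I would present this limit computation as the core, citing Proposition \ref{P1}, \eqref{trace}, Proposition \ref{compactlongtime}, and \cite[Propositions 2.1--2.3]{cjzpzz} for the eigenvalue ODE comparisons, and state \eqref{c0key} as the conclusion.
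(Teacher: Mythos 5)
Your proposal does not close. You correctly diagnose the central difficulty -- pairing the flow equation $h^{-1}\partial_t h=4\Phi_\varepsilon(H(t))$ with $s/|s|$ loses because $|\sqrt{-1}\Lambda_\omega G_{H(t)}-\lambda\cdot\mathrm{Id}_E|\le C_0+\varepsilon|s|$ exactly cancels the dissipative term -- but you never find the step that resolves it, and each of your fallbacks fails. Passing to the stationary limit $H_\infty$ does not prove the statement, which is a bound on $|\log h(t)|_{H_0}$ for \emph{all} $t$ along the flow; and, as you yourself observe, at the limit $|\Phi_\varepsilon(H_\infty)|_{H_\infty}=0$ carries no information about $\varepsilon|s_\infty|$, since the quantity that decays under \eqref{dec} is precisely the one that vanishes there. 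The closing sentences ("$|\Phi(H_0)|_{H_0}$ bounds, via \eqref{dec} and the maximum principle \dots the quantity $\varepsilon|s_\infty|$") assert the conclusion without an argument.

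The missing idea is to rewrite the curvature of $H(t)$ in terms of the \emph{initial} metric before pairing: by \eqref{GHK01}, $\sqrt{-1}\Lambda_\omega G_{H}-\lambda\cdot\mathrm{Id}_E-\varepsilon\log h=\Phi(H_0)+\tfrac{\sqrt{-1}}{4}\Lambda_\omega D(h^{-1}D^c_{H_0}h)-\varepsilon\log h$. Pairing with $\log h$ in the $H_0$-metric and using $\langle H^{-1}\partial_t H,\log h\rangle_{H_0}=\tfrac12\partial_t|\log h|^2_{H_0}$ together with the Kato-type inequality
$\langle \sqrt{-1}\Lambda_\omega D(h^{-1}D^c_{H_0}h),\log h\rangle_{H_0}\le\tfrac12\widetilde\Delta|\log h|^2_{H_0}$
yields
\begin{equation*}
\tfrac18\Bigl(\tfrac{\partial}{\partial t}-\widetilde\Delta\Bigr)|\log h|^2_{H_0}\le |\Phi(H_0)|_{H_0}\,|\log h|_{H_0}-\varepsilon|\log h|^2_{H_0},
\end{equation*}
in which the source term involves only the fixed quantity $\Phi(H_0)$, linearly in $|\log h|$, while the $-\varepsilon|\log h|^2$ term is genuinely quadratic and dissipative; the parabolic maximum principle (with $\log h=0$ at $t=0$ and on $\partial M$) then gives $|\log h|\le\varepsilon^{-1}\max_M|\Phi(H_0)|_{H_0}$ at every time. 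This substitution of $G_{H_0}$ plus a second-order term for $G_{H(t)}$ is exactly what breaks the circularity you ran into, and it is the step absent from your proposal.
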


\begin{proof}
By direct calculation, we have
\begin{equation} \label{JSJJ11}
\begin{split}
\langle H^{-1}\frac{\partial H}{\partial t}, \log h \rangle_{H_0} &= 4\langle \sqrt{-1}\Lambda_{\omega} G_{H} - \lambda \cdot \mathrm{Id}_E - \varepsilon \log h, \log h \rangle_{H_0} \\
&= \langle 4\Phi(H_0) + \sqrt{-1}\Lambda_{\omega}(D(h^{-1}D_{H_0}^{c}h) - 4\varepsilon \log h, \log h \rangle_{H_0}.
\end{split}
\end{equation}
We can easily check that
\begin{equation}\label{JSJJ22}
\langle H^{-1}\frac{\partial H}{\partial t},\log h\rangle_{H_0}=\langle h^{-1}\frac{\partial h}{\partial t},\log h\rangle_{H_0}=\frac{1}{2}\frac{\partial}{\partial t}|\log h|^2_{H_0}
\end{equation}
and
\begin{equation}\label{JSJJ33}
\langle \sqrt{-1}\Lambda_{\omega }D(h^{-1}D_{H_0}^{c}h),\log h\rangle_{H_0}\leq \frac{1}{2}\widetilde{\Delta}(|\log h|^2_{H_0}).
\end{equation}
From \eqref{JSJJ11}, \eqref{JSJJ22} and \eqref{JSJJ33}, it follows that
\begin{equation*}
\frac{1}{8}(\frac{\partial}{\partial t}-\widetilde{\Delta})(|\log h|^2_{H_0})\leq |\Phi(H_0)|_{H_0}|\log h|_{H_0}-\varepsilon |\log h|^2_{H_0},
\end{equation*}
Finally, by using the maximum principle, we obtain \eqref{c0key}.
\end{proof}

\begin{prop} \label{noncompactc1} Suppose that $H(t)$ is a long-time solution to the heat flow (\ref{Flow2}) on compact Gauduchon manifold $M$ with smooth boundary $\partial M$. Set $h(t)=H_{0}^{-1}H(t)$. Assume there exists a constant $\overline{C}_0$ with
$$\sup_{(x,t)\in M \times[0,+\infty )}|\log h(t)|_{H_0}\leq \overline{C}_0.
$$
Then, for any compact subset $\Omega \subset M $, there exists a uniform constant $\overline{C}_1$, depending only on  $d^{-1}$, $\overline{C}_0$ and the geometry of $\tilde{\Omega }$, such that
\begin{equation}\label{CC1}\sup_{(x,t)\in \Omega \times[0,+\infty )}|\psi_{H}^{1,0}|_{H_0,\omega}\leq \overline{C}_1,
\end{equation}
where $d$ is the distance from $\Omega $ to $\partial M$ and $\tilde{\Omega }=\{x \in M|\mathrm{dist}(x, \Omega)\leq \frac{1}{2}d\}$.
\end{prop}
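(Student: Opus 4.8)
I would prove \eqref{CC1} as an interior a priori bound for the flow \eqref{Flow2}, combining a Bochner--Weitzenb\"ock inequality for $|\psi_{H(t)}^{1,0}|^{2}$ with the parabolic maximum principle applied to a cut-off test function, in the spirit of Hamilton \cite{HAMILTON}, Simpson \cite{SIM} and \cite{cjzpzz,cppcjzz}; the extra ingredient is the evolution of $\mathrm{tr}\,h+\mathrm{tr}\,h^{-1}$, which supplies a favourable negative term. First, by Proposition \ref{P1} the quantity $|\sqrt{-1}\Lambda_{\omega}G_{H(t)}-\lambda\cdot\mathrm{Id}_{E}-\varepsilon\log h(t)|^{2}_{H(t)}$ is a heat subsolution, so since $H(t)\equiv\widetilde{H}$ on $\partial M$ the maximum principle gives $\sup_{M\times[0,+\infty)}|\sqrt{-1}\Lambda_{\omega}G_{H(t)}|_{H(t)}\le C_{0}$; moreover $|\log h(t)|_{H_{0}}\le\overline{C}_{0}$ makes $|\cdot|_{H(t)}$ and $|\cdot|_{H_{0}}$ mutually equivalent with $\overline{C}_{0}$-controlled constants, and from $D_{H(t)}''-D_{H_{0}}''=\tfrac12 h^{-1}D_{H_{0}}^{c}h$ one has $\psi_{H(t)}^{1,0}=\psi_{H_{0}}^{1,0}+\tfrac12 h^{-1}\big([\psi_{H_{0}}^{1,0},h]-\partial_{H_{0}}h\big)$. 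Since $\psi_{H_{0}}$ is smooth, hence bounded on the compact set $\widetilde{\Omega}$, it therefore suffices to bound $e(t):=|\psi_{H(t)}^{1,0}|^{2}_{H(t),\omega}$ on $\Omega$ uniformly in $t$, and the same identity shows that $e(t)$ and $|Dh(t)|^{2}_{H_{0},\omega}$ control one another on $\widetilde{\Omega}$ up to constants depending only on $\overline{C}_{0}$ and the geometry of $\widetilde{\Omega}$.

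The main obstacle is the Bochner inequality. Differentiating $e(t)$ along \eqref{Flow2}, using the variation formulas for $D_{H(t)}$ and $\psi_{H(t)}$ from the derivation of \eqref{trace1} together with the projective flatness equations \eqref{dd:1} --- especially $\partial_{H}\psi_{H}^{1,0}=0$, $\bar{\partial}_{H}\psi_{H}^{1,0}+\partial_{H}\psi_{H}^{0,1}=0$, $\sqrt{-1}([\partial_{H},\bar{\partial}_{H}]+[\psi_{H}^{1,0},\psi_{H}^{0,1}])=\alpha\otimes\mathrm{Id}_{E}$, and the vanishing of $\Lambda_{\omega}$ on the $(2,0)$- and $(0,2)$-forms $\psi_{H}^{1,0}\wedge\psi_{H}^{1,0}$, $\psi_{H}^{0,1}\wedge\psi_{H}^{0,1}$ --- one should obtain, on $\widetilde{\Omega}\times[0,+\infty)$, an inequality of the form
\[
\Big(\frac{\partial}{\partial t}-\widetilde{\Delta}\Big)e(t)\ \le\ -c_{0}\,\big|\nabla_{H(t)}\psi_{H(t)}^{1,0}\big|^{2}_{H(t),\omega}\ +\ C_{1}\,e(t)\ +\ C_{2},
\]
with $c_{0}>0$ and $C_{1},C_{2}$ depending only on $\overline{C}_{0}$, $C_{0}$ and the geometry of $\widetilde{\Omega}$. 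This Weitzenb\"ock computation is the technical heart: one must use the self-adjointness of $\psi_{H(t)}$ and projective flatness so that the a priori quartic expressions in $\psi_{H(t)}^{1,0}$ assemble, via the curvature identity, into a non-positive ``moment-map'' term plus terms controlled by $F_{D}$ and $\sqrt{-1}\Lambda_{\omega}G_{H(t)}$ (bounded by the first step) --- exactly as in the harmonic-metric estimates of \cite{SIM} and \cite{cppcjzz}. (If a residual term quadratic in $e(t)$ should survive the identity, one would in addition invoke the local energy bound obtained by integrating the evolution of $\mathrm{tr}\,h+\mathrm{tr}\,h^{-1}$ over $\widetilde{\Omega}$, reducing to the above.)

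Finally I would run the cut-off maximum principle. Fix $\varphi\in C^{\infty}_{c}(\widetilde{\Omega})$ with $0\le\varphi\le1$, $\varphi\equiv1$ on $\Omega$, and $|\nabla\varphi|^{2}_{g}+|\widetilde{\Delta}\varphi|\le Cd^{-2}$. Using $|\nabla e(t)|\le 2\sqrt{e(t)}\,|\nabla_{H(t)}\psi_{H(t)}^{1,0}|_{H(t),\omega}$ and absorbing the terms in which $\widetilde{\Delta}$ hits $\varphi^{2}$ into the negative term above gives $(\partial_{t}-\widetilde{\Delta})(\varphi^{2}e(t))\le(C_{1}+C_{3}d^{-2})\,e(t)+C_{2}$. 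Next, running the computation of Proposition \ref{noncompactc0} with $\mathrm{tr}\,h+\mathrm{tr}\,h^{-1}$ in place of $|\log h|^{2}$ --- using \eqref{GKJHGK541}, the identity $\mathrm{tr}(\sqrt{-1}\Lambda_{\omega}Dh\wedge h^{-1}D_{H_{0}}^{c}h)=|Dh\cdot h^{-1/2}|^{2}_{H_{0},\omega}$ from the proof of Proposition \ref{P243}, and the first step --- gives $(\partial_{t}-\widetilde{\Delta})(\mathrm{tr}\,h+\mathrm{tr}\,h^{-1})\le C_{4}-c_{1}|Dh|^{2}_{H_{0},\omega}\le C_{5}-c_{2}\,e(t)$ on $\widetilde{\Omega}\times[0,+\infty)$. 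Choosing $B=B(\overline{C}_{0},d^{-1},\widetilde{\Omega})$ with $Bc_{2}\ge C_{1}+C_{3}d^{-2}+1$, the function $w:=\varphi^{2}e(t)+B(\mathrm{tr}\,h+\mathrm{tr}\,h^{-1})$ then satisfies $(\partial_{t}-\widetilde{\Delta})w\le -e(t)+C_{6}$ on $\widetilde{\Omega}\times[0,+\infty)$, while $w\le 2\,\mathrm{rank}(E)\,B\,e^{\overline{C}_{0}}$ on $\partial\widetilde{\Omega}$ (where $\varphi=0$) and $w|_{t=0}\le\sup_{\widetilde{\Omega}}|\psi_{H_{0}}^{1,0}|^{2}_{H_{0},\omega}+2\,\mathrm{rank}(E)\,B$. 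By the parabolic maximum principle on $\widetilde{\Omega}\times[0,T]$, at any interior space--time maximum of $w$ one has $e(t)\le C_{6}$, hence $w\le C_{6}+2\,\mathrm{rank}(E)\,B\,e^{\overline{C}_{0}}$ there, so $\sup_{\widetilde{\Omega}\times[0,T]}w$ is bounded independently of $T$. Letting $T\to+\infty$ and restricting to $\Omega$, where $\varphi\equiv1$, yields $\sup_{\Omega\times[0,+\infty)}|\psi_{H(t)}^{1,0}|^{2}_{H(t),\omega}\le\overline{C}_{1}$ with $\overline{C}_{1}$ of the asserted form, and hence \eqref{CC1} by the equivalence of $|\cdot|_{H(t)}$ and $|\cdot|_{H_{0}}$.
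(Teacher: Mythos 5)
Your proposal is correct and follows essentially the same route as the paper: a Bochner-type inequality $(\widetilde{\Delta}-\partial_t)|\psi_H^{1,0}|^2\geq |\nabla\psi_H^{1,0}|^2-C_1|\psi_H^{1,0}|^2-C_2$, combined with the evolution of $\mathrm{tr}\,h$ (whose $|Dh\cdot h^{-1/2}|^2$ term supplies the favourable $-c\,e(t)$ after using the $C^0$-bound), assembled into a cut-off test function and handled by the parabolic maximum principle. The only differences are cosmetic (the paper uses two nested cut-offs and only $\mathrm{tr}\,h$ rather than $\mathrm{tr}\,h+\mathrm{tr}\,h^{-1}$), and the Weitzenb\"ock computation you defer is likewise stated without full derivation in the paper.
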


\begin{proof}
A straightforward calculation shows that
\begin{equation} \label{noncompactc1eq1}
\begin{split}
(\widetilde{\Delta}-\frac{\partial}{\partial t}) \textmd{tr}h &= \textmd{tr}(\sqrt{-1}\Lambda_{\omega }(Dh\wedge h^{-1}D_{H_0}^{c}h))-4 \textmd{tr}(h\Phi(H_{0}))
+4\varepsilon \textmd{tr}(h\log h)\\
&=|Dh\cdot h^{-\frac{1}{2}}|^{2}_{H_{0},\omega}-4 \textmd{tr}(h\Phi(H_{0}))
+4\varepsilon \textmd{tr}(h\log h),
\end{split}
\end{equation}

\begin{equation} \label{noncomp33}
\frac{\partial \psi_{H}^{1,0}}{\partial t}=-\frac{1}{2}\partial_{H}(h^{-1}(t)\frac{\partial h(t)}{\partial t})+\frac{1}{2}[\psi_{H}^{1,0},h^{-1}(t)\frac{\partial h(t)}{\partial t}],
\end{equation}
and
\begin{equation} \label{noncomp35}
\begin{split}
\widetilde{\Delta} |\psi_{H}^{1,0}|_{H,\omega}^{2}\geq &2|\nabla \psi_{H}^{1,0}|_{H,\omega}^{2}+2|[\psi_{H}^{1,0},\psi_{H}^{0,1}]|_{H,\omega}^{2}+4|\psi_{H}^{1,0}\wedge\psi_{H}^{1,0}|_{H,\omega}^{2}\\ &-4\textmd{Re} \langle\partial_{H}(\sqrt{-1}\Lambda_{\omega}G_{H}),\psi_{H}^{1,0}\rangle_{H,\omega}-2\check{C}_1|\nabla\psi_{H}^{1,0}|_{H,\omega}|\psi_{H}^{1,0}|_{H,\omega}
-2|R|\cdot|\psi_{H}^{1,0}|_{H,\omega}^{2},
\end{split}
\end{equation}
where $\check{C}_1$ is a positive constant depending only on the geometry of $\tilde{\Omega }$, and $R$ denotes the curvature of the Chern connection.
By \eqref{noncomp33} and \eqref{noncomp35}, we can obtain
\begin{equation} \label{noncompactc1eq3}
(\widetilde{\Delta}-\frac{\partial}{\partial t})|\psi_{H}^{1,0}|^2_{H,\omega}\geq |\nabla \psi_{H}^{1,0}|_{H,\omega}^{2}-\check{C}_2|\psi_{H}^{1,0}|_{H,\omega}^{2}-4\varepsilon |\partial_{H}(\log(h))|_{H,\omega}|\psi_{H}^{1,0}|_{H,\omega}
\end{equation}
on $\tilde{\Omega }\times [0,+\infty )$, where $\check{C}_2$ is a uniform constant depending only on $\overline{C}_0$ and the geometry of $\tilde{\Omega }$.
Notice that
	\begin{equation}
	|\partial_{H}(\log(h))|_{H,\omega}\leq \check{C}_3|h^{-1}\partial_{H}h|_{H,\omega}\leq \check{C}_4|\psi_{H}^{1,0}|_{H,\omega}+\check{C}_5,
	\end{equation}
	then
	\begin{equation}
	\begin{split}
	(\widetilde{\Delta}-\frac{\partial}{\partial t})|\psi_{H}^{1,0}|^2_{H,\omega}&\geq |\nabla \psi_{H}^{1,0}|_{H,\omega}^{2}-\check{C}_6|\psi_{H}^{1,0}|_{H,\omega}^{2}-\check{C}_7|\psi_{H}^{1,0}|_{H,\omega}\\
&\geq |\nabla \psi_{H}^{1,0}|_{H,\omega}^{2}-C_{1}|\psi_{H}^{1,0}|_{H,\omega}^{2}-C_{2}.
	\end{split}
	\end{equation}
where $\check{C}_3,\check{C}_4,\check{C}_5,\check{C}_6 $ and $\check{C}_7$ are the constants depending only on $\overline{C}_0$ and $\tilde{\Omega }$.

Define $\overline{\Omega }=\{x \in M | \mathrm{dist} (x, \Omega )\leq \frac{1}{4}d\}$. Let $0\leq \varphi_1\leq \varphi_2\leq 1$ be non-negative cut-off functions satisfying
 \begin{gather*}
     \varphi_1
     =\begin{cases}
       0, & x\in M\backslash \overline{\Omega }, \\
       1, & x\in \Omega,
     \end{cases}
  \end{gather*}
   \begin{gather*}
    \varphi_2
     =\begin{cases}
       0, & x\in M\backslash \tilde{\Omega }, \\
       1, & x\in \overline{\Omega },
     \end{cases}
  \end{gather*}

  $$|\textmd{d} \varphi_i|^2+|\widetilde{\Delta} \varphi_i|\leq c, \ \ i=1,2.$$
Consider the test function defined as
$$f(\cdot,t)=\varphi_1^2|\psi_{H}^{1,0}|^2_{H,\omega}+W\varphi_2^2\textmd{tr}h,$$
where $W$ is a constant to be determined later. Combining \eqref{noncompactc1eq1} and \eqref{noncompactc1eq3}, we obtain
\begin{equation}
\begin{split}
  (\tilde{\Delta}-\frac{\partial}{\partial t})f\geq & \varphi_1^2(|\nabla \psi_{H}^{1,0}|_{H,\omega}^{2}-C_{1}|\psi_{H}^{1,0}|_{H,\omega}^{2}-C_{2})+\tilde{\Delta}
  \varphi_1^2|\psi_{H}^{1,0}|_{H,\omega}^{2}\\
  &+4\langle\varphi_1\nabla \varphi_1,\nabla|\psi_{H}^{1,0}|_{H,\omega}^{2}\rangle+W\tilde{\Delta}\varphi_2^2{\rm tr}h+4W\langle\varphi_2\nabla\varphi_2,\nabla{\rm tr}h\rangle\\
  &+W\varphi_2^2 \{|Dh\cdot h^{-\frac{1}{2}}|^{2}_{H_{0},\omega}-4 \textmd{tr}(h\Phi(H_{0}))
+4\varepsilon \textmd{tr}(h\log h)\}.
\end{split}
\end{equation}

We make use of the following inequalities
\begin{equation}
\begin{split}
  4\langle\varphi_1\nabla\varphi_1,\nabla|\psi_{H}^{1,0}|_{H,\omega}^{2}\rangle&\geq -8\varphi_1|\nabla \varphi_1||\psi_{H}^{1,0}|_{H,\omega}|\nabla_H\psi_{H}^{1,0}|_{H,\omega}\\
  &\geq-\varphi_1^2|\nabla_H\psi_{H}^{1,0}|_{H,\omega}^2-16|\nabla\varphi_1|^2|\psi_{H}^{1,0}|_{H,\omega}^{2},
\end{split}
\end{equation}
\begin{equation}
  4W\langle\varphi_2\nabla\varphi_2,\nabla{\rm tr}h\rangle \geq-\varphi_2^2|\nabla{\rm tr}h|^2-4W^2|\nabla\varphi_2|^2,
\end{equation}
\begin{equation}
\begin{split}
  |Dh\cdot h^{-\frac{1}{2}}|^{2}_{H_{0},\omega}\geq \check{C}_8 |Dh\cdot h^{-1}|^{2}_{H,\omega}\geq \check{C}_9 |\psi_{H}^{1,0}|_{H,\omega}^{2}-\check{C}_{10}.
\end{split}
\end{equation}
where $\check{C}_8,\check{C}_9 $ and $\check{C}_{10}$ are the constants depending only on $\overline{C}_0$ and $\tilde{\Omega }$.

Hence, we can obtain
\begin{equation}
\begin{split}
  (\tilde{\Delta}-\frac{\partial}{\partial t})f\geq & \varphi_1^2(-C_{1}|\psi_{H}^{1,0}|_{H,\omega}^{2}-C_{2})+\tilde{\Delta}
  \varphi_1^2|\psi_{H}^{1,0}|_{H,\omega}^{2}\\
  &-16|\nabla\varphi_1|^2|\psi_{H}^{1,0}|_{H,\omega}^{2}+W\tilde{\Delta}\varphi_2^2{\rm tr}h
  -\varphi_2^2|\nabla{\rm tr}h|^2-4W^2|\nabla\varphi_2|^2\\
  &+W\varphi_2^2 \{\check{C}_9 |\psi_{H}^{1,0}|_{H,\omega}^{2}-\check{C}_{10}-4 \textmd{tr}(h\Phi(H_{0}))
+4\varepsilon \textmd{tr}(h\log h)\}\\
 &\geq \varphi_2^2(-C_{1}|\psi_{H}^{1,0}|_{H,\omega}^{2}-C_{2})-C_{3}\varphi_2^2 |\psi_{H}^{1,0}|_{H,\omega}^{2}-16C_{4} \varphi_2^2|\psi_{H}^{1,0}|_{H,\omega}^{2}\\
 &-C_{5}C_{6}W\varphi_2^2-C_{7}\varphi_2^2-4C_{8}W^2\varphi_2^2+\varphi_2^2 \check{C}_9 W|\psi_{H}^{1,0}|_{H,\omega}^{2}-C_{9}W\varphi_2^2\\
 &\geq  \varphi_2^2 |\psi_{H}^{1,0}|_{H,\omega}^{2}(-C_{1}-C_{3}-16C_{4}+\check{C}_9 W)-C_{2}-C_{5}C_{6}W-C_{7}\\
 &-4C_{8}W^2-C_{9}W.
\end{split}
\end{equation}
By choosing $W$ such that $-C_{1}-C_{3}-16C_{4}+\check{C}_9 W=1$, we have
\begin{equation} \label{noncompactc1eq4}
(\widetilde{\Delta}-\frac{\partial}{\partial t})f\geq \psi^2_2|\psi_{H}^{1,0}|_{H,\omega}^{2}-\widetilde{C}_0 \quad  \quad  \text{on } M\times [0,+\infty ).
\end{equation}
 Let $f(y,t_0)=\max\limits_{M\times [0,+\infty)}f$. From the definition of $\psi_i$ and the uniform $C^0$-bound of $h(t)$, we can assume
$$(y,t_0)\in \overline{\Omega}\times (0,+\infty).$$
Then, \eqref{noncompactc1eq4} implies
\begin{equation*}
|\psi_{H}^{1,0}(t_0)|^2_{H(t_0),\omega}(y)\leq \widetilde{C}_0.
\end{equation*}
Therefore, there exists a uniform constant $\overline{C}_1$ depending only on $d^{-1}$, $\overline{C}_0$ and the geometry of $\tilde{\Omega }$ such that
\begin{equation}\label{CC1}\sup_{(x,t)\in \Omega \times[0,+\infty )}|\psi_{H}^{1,0}|_{H_0,\omega}\leq \overline{C}_1.
\end{equation}
\end{proof}

Secondly, we will solve the perturbed equation (\ref{eq}) over the compact Gauduchon manifold and establish the following theorem.

\begin{thm} \label{comthm11}
Suppose that $(E,D)$ be a projectively flat vector bundle equipped with a fixed Hermitian metric $H_0$
over the compact Gauduchon manifold $M$ with non-empty
boundary $\partial M$. Then there exists a unique
Hermitian metric $H$ on $E$ satisfying
\begin{equation} \label{dc11}
\begin{cases}
\sqrt{-1}\Lambda_{\omega } G_{H}-\lambda \cdot \mathrm{Id}_E-\varepsilon \log (H_0^{-1}H)=0,\\
H|_{\partial M}=H_{0},
\end{cases}
\end{equation}
for any $\varepsilon \geq 0$.
When $\varepsilon >0$,  the following estimates hold:
\begin{equation}\label{DC01}
\max_{x\in M}|s|_{H_0}(x)\leq \frac{1}{\varepsilon}\max_{M}|\Phi(H_0)|_{H_0}.
\end{equation}
and
\begin{equation} \label{eq0431}
\|Ds\|_{L^2(M)}=\|D_{H_0}^{c}s\|_{L^2(M)}\leq C(\varepsilon^{-1},\Phi(H_0),\mathrm{Vol}(M)),
\end{equation}
where $s=\log (H_{0}^{-1}H)$. Moreover, if the Hermitian metric $H_{0}$ satisfies
\begin{equation}\label{trace31}
\mathrm{tr}(\sqrt{-1}\Lambda_{\omega } G_{H}-\lambda \cdot \mathrm{Id}_E)=0,
\end{equation}
then $\mathrm{tr}(s)=0$ and $H$ also satisfies (\ref{trace31}).
\end{thm}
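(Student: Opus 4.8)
\noindent\emph{Plan of proof.}
The idea is to solve the Dirichlet problem \eqref{dc11} by running the parabolic flow \eqref{Flow2} and letting $t\to+\infty$, and then to extract uniqueness and the quantitative bounds from the maximum principle and the integral identity of Proposition \ref{idbundle01}. Fix $\varepsilon>0$ (the case $\varepsilon=0$ is discussed at the end) and, by Proposition \ref{compactlongtime}, let $H(t)$, $t\in[0,+\infty)$, be the long-time solution of \eqref{Flow2}; write $h(t)=H_0^{-1}H(t)$, $s(t)=\log h(t)$, and $\Phi(H(t))=\sqrt{-1}\Lambda_{\omega}G_{H(t)}-\lambda\cdot\mathrm{Id}_E-\varepsilon s(t)$, so that \eqref{Flow2} reads $H^{-1}\partial_tH=4\,\Phi(H(t))$.

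\textbf{Step 1 (uniform estimates along the flow).} Proposition \ref{noncompactc0} gives $\sup_{M\times[0,+\infty)}|s(t)|_{H_0}\le\varepsilon^{-1}\max_M|\Phi(H_0)|_{H_0}=:C_0$, which yields \eqref{DC01} in the limit. Since the Dirichlet datum is the fixed metric $H_0$, we have $H(t)|_{\partial M}\equiv H_0$, hence $\partial_tH|_{\partial M}\equiv0$, and therefore $\Phi(H(t))|_{\partial M}\equiv0$ for every $t$. Combining this with \eqref{dec} and the parabolic maximum principle forces $\sup_M|\Phi(H(t))|_{H(t)}\le\max_M|\Phi(H_0)|_{H_0}$ uniformly in $t$; in particular $\sup_M|\Lambda_{\omega}G_{H(t)}|_{H_0}$ is uniformly bounded, so by case (1) of Lemma \ref{www1} the metrics $H(t)$ are bounded in $L^p_2(M)$ uniformly in $t$, and interior and boundary parabolic Schauder estimates (the boundary data being smooth and compatible) bootstrap this to uniform-in-$t$ bounds of $H(t)$ in every $C^k(\overline M)$. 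Hence $\{H(t)\}_{t\ge1}$ is precompact in $C^\infty(\overline M)$, and the $C_0$-bound makes every subsequential limit mutually bounded with $H_0$.

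\textbf{Step 2 (convergence of the flow).} This is the crux. The computation in the proof of Proposition \ref{P243} applies verbatim to any two solutions of \eqref{Flow2} (it uses only that both solve the equation, not that their initial data agree), so we may apply it to $H(\cdot)$ and $H(\cdot+\tau)$. Refining the pointwise estimate used there — the $\varepsilon$-term $4\varepsilon\,\mathrm{tr}\{h(\log(H_0^{-1}K)-\log(H_0^{-1}H))+h^{-1}(\cdots)\}$ is not merely $\le0$ but is $\le-c\,\varepsilon\,\sigma(H,K)$ for some $c>0$ (in the commuting case this is the elementary inequality $x\sinh x\ge4\sinh^2(x/2)$, extended to the general case by operator monotonicity of $\log$) — one obtains
\[
\Bigl(\frac{\partial}{\partial t}-\widetilde{\Delta}\Bigr)\sigma\bigl(H(t),H(t+\tau)\bigr)\le-c\,\varepsilon\,\sigma\bigl(H(t),H(t+\tau)\bigr).
\]
Since $\sigma(H(t),H(t+\tau))$ vanishes on $\partial M$ and $\sigma(H_0,H(\tau))=\mathrm{tr}\,h(\tau)+\mathrm{tr}\,h(\tau)^{-1}-2\,\mathrm{rank}(E)\le2\,\mathrm{rank}(E)(e^{C_0}-1)=:C_1$ uniformly in $\tau$ (by the $C_0$-bound), applying the maximum principle to $e^{c\varepsilon t}\sigma(H(t),H(t+\tau))$ gives $\sup_M\sigma(H(t),H(t+\tau))\le C_1e^{-c\varepsilon t}$ for all $t,\tau\ge0$. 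Thus $\{H(t)\}$ is Cauchy in the $C^0$-topology as $t\to+\infty$; with Step 1 it converges in $C^\infty(\overline M)$ to a smooth metric $H$ with $H|_{\partial M}=H_0$ and $H$ mutually bounded with $H_0$. Finally $H(t+1)-H(t)=\int_t^{t+1}\partial_sH(s)\,ds$: as $t\to+\infty$ the left side tends to $0$ while the integrand tends uniformly to $4H\,\Phi(H)$, so $\Phi(H)=0$, i.e. $H$ solves \eqref{dc11}.

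\textbf{Step 3 (uniqueness, the $L^2$-bound, the trace statement, and $\varepsilon=0$).} If $H,K$ both solve \eqref{dc11}, then by Corollary \ref{uniq} the function $\sigma(H,K)$ is $\widetilde{\Delta}$-subharmonic on $M$ and vanishes on $\partial M$, hence $\sigma(H,K)\le0$; as $\sigma\ge0$ this forces $H=K$. For \eqref{eq0431}, apply case (1) of Proposition \ref{idbundle01} to the pair $H,H_0$ (legitimate: $s|_{\partial M}=0$, and that proof is insensitive to the value of $\lambda$), and use $\Phi(H)=0$ to get $\int_M\langle\Psi(s)(Ds),Ds\rangle_{H_0}\frac{\omega^n}{n!}+4\varepsilon\|s\|_{L^2}^2=4\int_M\mathrm{tr}(\Phi(H_0)s)\frac{\omega^n}{n!}\le4\|\Phi(H_0)\|_{L^2}\|s\|_{L^2}$; since the eigenvalues of $s$ lie in $[-C_0,C_0]$, the operator $\Psi(s)$ from \eqref{eq3301} is bounded below (as a quadratic form) by $(1-e^{-2C_0})/(2C_0)>0$, and $\|s\|_{L^2}\le C_0\sqrt{\mathrm{Vol}(M,g)}$, which together give the asserted bound on $\|Ds\|_{L^2}=\|D_{H_0}^cs\|_{L^2}$. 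For the trace statement, assume $\mathrm{tr}(\sqrt{-1}\Lambda_{\omega}G_{H_0}-\lambda\cdot\mathrm{Id}_E)=0$; tracing \eqref{GHK01} and using $\mathrm{tr}(h^{-1}D_{H_0}^ch)=d^c\,\mathrm{tr}(s)$ gives $\mathrm{tr}(\sqrt{-1}\Lambda_{\omega}G_{H(t)})=\mathrm{tr}(\sqrt{-1}\Lambda_{\omega}G_{H_0})+\tfrac14\widetilde{\Delta}\,\mathrm{tr}(s(t))$, while $\partial_t\,\mathrm{tr}(s(t))=\mathrm{tr}(h^{-1}\partial_th)=4\,\mathrm{tr}(\Phi(H(t)))$; combining, $(\partial_t-\widetilde{\Delta})\mathrm{tr}(s(t))=-4\varepsilon\,\mathrm{tr}(s(t))$, and since $\mathrm{tr}(s)$ vanishes at $t=0$ and on $\partial M$ the maximum principle forces $\mathrm{tr}(s(t))\equiv0$, hence $\mathrm{tr}(s)=0$ in the limit and $\mathrm{tr}(\sqrt{-1}\Lambda_{\omega}G_H-\lambda\cdot\mathrm{Id}_E)=\varepsilon\,\mathrm{tr}(s)=0$. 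For $\varepsilon=0$ the same scheme works once a uniform-in-$t$ $C^0$-bound on $h(t)$ is available: this follows from Proposition \ref{A2111} (whose right side then reduces to the fixed quantity $4|\sqrt{-1}\Lambda_{\omega}G_{H_0}-\lambda\cdot\mathrm{Id}_E|_{H_0}$) compared with a bounded solution of $-\widetilde{\Delta}\bar v=\mathrm{const}$ having boundary value $\log(2\,\mathrm{rank}(E))$, and the convergence of the flow is then the classical argument of \cite{DON85} (cf. \cite{cjzpzz}). The main obstacle is Step 2: because the Donaldson functional is not well defined on a Gauduchon manifold, we must replace its monotonicity by the pointwise differential inequality for Donaldson's distance above, so the technical heart of the proof is verifying the strict spectral inequality upgrading Proposition \ref{P243}, together with the uniform up-to-$\partial M$ estimates of Step 1.
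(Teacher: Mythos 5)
Your overall architecture --- run the Dirichlet flow \eqref{Flow2} to $t=+\infty$, identify the limit as the solution of \eqref{dc11}, get uniqueness from Corollary \ref{uniq} plus the maximum principle, the $C^{0}$ bound from Proposition \ref{noncompactc0}, the $L^{2}$ bound from Proposition \ref{idbundle01} together with the lower bound on $\Psi(s)$, and the trace statement from the evolution of $\tr(s)$ --- coincides with the paper's. Your Steps 1 and 3 are essentially the paper's argument and are sound. The problem is Step 2, which you yourself identify as the technical heart, and which contains a genuine gap.

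Your convergence mechanism rests entirely on the unproven claim that the $\varepsilon$-term in Proposition \ref{P243} satisfies
\[
4\varepsilon\,\tr\bigl\{h\bigl(\log(H_0^{-1}K)-\log(H_0^{-1}H)\bigr)+h^{-1}\bigl(\log(H_0^{-1}H)-\log(H_0^{-1}K)\bigr)\bigr\}\;\leq\;-c\,\varepsilon\,\sigma(H,K),
\qquad h=K^{-1}H,
\]
for \emph{non-commuting} endomorphisms. The phrase ``extended to the general case by operator monotonicity of $\log$'' is not a proof: the quantity $h=e^{-\log(H_0^{-1}K)}e^{\log(H_0^{-1}H)}$ is a product of non-commuting exponentials, and the cited source \cite[Proposition 2.3]{cjzpzz} establishes only the sign $\leq 0$, by an eigenbasis argument that does not obviously quantify. (An infinitesimal computation at $H=K$ does yield $c=2$ via the logarithmic-mean versus arithmetic-mean inequality, so the statement is plausible, but it requires an actual proof with $c$ controlled by the uniform $C^{0}$ bound on $\log h$.) Moreover your mechanism degenerates at $\varepsilon=0$, a case the theorem also asserts, and there you defer to ``the classical argument,'' which on a Gauduchon base is exactly what needs to be supplied. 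The paper avoids all of this: it solves the auxiliary Dirichlet problem $\widetilde{\Delta}v=-|\sqrt{-1}\Lambda_{\omega}G_{H_0}-\lambda\cdot\mathrm{Id}_E|_{H_0}$ with $v|_{\partial M}=0$, uses \eqref{dec} to show that $w(x,t)=\int_0^t|\Phi_{\varepsilon}(H)|_H(x,\rho)\,\mathrm{d}\rho-v(x)$ is a parabolic subsolution vanishing on the parabolic boundary, and concludes the pointwise bound $\int_0^{\infty}|\Phi_{\varepsilon}(H)|_H(x,\rho)\,\mathrm{d}\rho\leq\sup_M v$; the tail of this convergent integral controls $\sigma(H(\cdot,t),H(\cdot,t'))$ through \eqref{thmeq4} and yields the $C^{0}$-Cauchy property uniformly for all $\varepsilon\geq 0$. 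You should either replace your Step 2 by this argument or supply a complete proof of the quantitative matrix inequality you invoke.
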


\begin{proof}
Assume the initial Hermitian metric $H_{0}$ satisfies (\ref{trace31}), then we obtain
\begin{equation}
\{e^{4\varepsilon t}\cdot \mathrm{tr} (\sqrt{-1}\Lambda_{\omega }G_{H(t)}-\lambda \cdot \mathrm{Id}_E-\varepsilon \log h(t))\}=0
\end{equation}
on $M\times\{t=0\}$ and $\partial M\times[0,t)$. Combining the evolution equation (\ref{trace}) and the maximum principle, we have
\[
\mathrm{tr}\left\{\sqrt{-1}\Lambda_{\omega}G_{H(t)} - \lambda \cdot \mathrm{Id}_E - \varepsilon\log(H_{0}^{-1}H(t))\right\} = 0 \quad \text{on } M\times[0,t).
\]
Hence, it follows that
\begin{equation*}
\tr (\log H_{0}^{-1} H(t))=0, \  \ \tr (\sqrt{-1}\Lambda_{\omega } G_{H(t)}-\lambda \cdot \textmd{Id}_E)=0.
\end{equation*}

Following the method in \cite[Proposition 1.8, Chapter 5]{T}, we can solve the following Dirichlet boundary value problem on the manifold $M$:
\begin{equation} \label{thmeq2}
\widetilde{\triangle }v =-
|\sqrt{-1}\Lambda_{\omega } G_{H_0}-\lambda \cdot \textmd{Id}_E|_{H_{0}}, \ \
v|_{\partial M}=0.
\end{equation}
Define the function
$$w(x, t)=\int_{0}^{t}|\sqrt{-1}\Lambda_{\omega } G_{H}-\lambda \cdot \textmd{Id}_E-\varepsilon \log (H_0^{-1}H)|_{H}(x, \rho) \mathrm{d}\rho -v(x).$$
From (\ref{dec}), (\ref{thmeq2}), and the boundary
condition of $H$, we conclude that, for $t>0$,
\[
\bigl|\sqrt{-1}\Lambda_{\omega} G_{H} - \lambda \mathrm{Id}_E - \varepsilon \log (H_0^{-1}H)\bigr|_{H} (x,t) = 0 \quad \text{on } \partial M.
\]
Thus, $w(x, t)$
satisfies
\begin{equation}\label{xqbh222}
(\widetilde{\triangle } -\frac{\partial
}{\partial t })w(x, t)\geq 0, \ \ w(x, 0)=-v(x), \ \ w(x, t)|_{\partial M }=0.
\end{equation}
Applying the maximum principle to (\ref{xqbh222}), we obtain
\begin{equation} \label{thmeq3}
\int_{0}^{t}|\sqrt{-1}\Lambda_{\omega } G_{H}-\lambda \cdot \textmd{Id}_E-\varepsilon \log (H_0^{-1}H)|_{H}(x, \rho) \mathrm{d}\rho \leq
\sup_{y\in M} v(y)
\end{equation}
for any $x\in M$, $0<t <+\infty $.

Fix $t_{1}\leq t \leq t_{2}$ and define $\tilde{h}(x, t)= H^{-1}(x,
t_{1})H(x, t)$. A direct calculation shows
\begin{equation}\label{xqbhd562}
\frac{\partial }{\partial t} \log \textmd{tr}(\tilde{h})\leq
4|\sqrt{-1}\Lambda_{\omega } G_{H}-\lambda \cdot \textmd{Id}_E-\varepsilon \log (H_0^{-1}H)|_{H}.
\end{equation}
Integrating (\ref{xqbhd562}) from $t_{1}$ to $t$, we derive
\begin{equation}\label{hlhl12}
\begin{split}
&\textmd{tr} (H^{-1}(x, t_{1})H(x, t))\\
&~~\leq r\exp{(4\int_{t_{1}}^{t}|\sqrt{-1}\Lambda_{\omega } (G_{H}-\lambda \cdot \textmd{Id}_E-\varepsilon \log (H_0^{-1}H)|_{H}
\mathrm{d}\rho)}.
\end{split}
\end{equation}
Similarly, by reversing the roles of $t_{1}$ and $t$, we have
\begin{equation} \label{hlhl25}
\begin{split}
&\textmd{tr} (H^{-1}(x, t)H(x, t_{1}))\\
&~~\leq r\exp{(4\int_{t_{1}}^{t}|\sqrt{-1}\Lambda_{\omega } G_{H}-\lambda \cdot \textmd{Id}_E-\varepsilon \log (H_0^{-1}H)|_{H}
\mathrm{d}\rho)}.
\end{split}
\end{equation}
Combining (\ref{hlhl12}) and (\ref{hlhl25}), we conclude
\begin{equation} \label{thmeq4}
\begin{split}
&\sigma (H(x , t), H(x , t_{1}) )\\
&~~\leq 2r
\{\exp{(4\int_{t_{1}}^{t}|\sqrt{-1}\Lambda_{\omega } G_{H}-\lambda \cdot \textmd{Id}_E-\varepsilon \log (H_0^{-1}H)|_{H}
\mathrm{d}\rho)} -1 \}.
\end{split}
\end{equation}
From the inequalities (\ref{thmeq3}) and (\ref{thmeq4}), we conclude \( H(t) \) converges to a continuous metric \( H_{\infty} \) in the \( C^0 \)-topology as \( t \to +\infty \). By Lemma \ref{www1}, for any \( 1 < p < +\infty \), \( H(t) \) is uniformly bounded in \( C^1_{\text{loc}} \) and also uniformly bounded in \( L_{2, \text{loc}}^p \). Additionally, \( |H^{-1} \frac{\partial H}{\partial t}| \) is uniformly bounded. By standard elliptic regularity theory, there exists a subsequence of \( H(t) \) that converges in the \( C^{\infty}_{\text{loc}} \)-topology to the metric \(H_{\infty} \). Moreover, from (\ref{thmeq3}), we conclude that
\begin{equation} \label{thmeqww3}
\sqrt{-1} \Lambda_\omega G_{H_{\infty}} - \lambda \cdot \mathrm{Id}_E - \varepsilon \log(H_0^{-1}H_{\infty}) = 0,
\end{equation}
i.e., \( H_{\infty} \) is the desired Hermitian metric satisfying the boundary condition. The uniqueness of the solution follows from Corollary \ref{uniq} and the maximum principle.

When $\varepsilon >0$, the inequality (\ref {c0key}) in Proposition \ref{noncompactc0} directly implies (\ref{DC01}). According to the definition,  we can obtain
\begin{equation*}
|Ds|_{H_{0},\omega}^{2}\leq \tilde{C}\langle \Psi(s)(Ds),Ds\rangle_{H_0,\omega},
\end{equation*}
where $\tilde{C}$ denotes a positive constant that depends only on the $L^{\infty}$-bound of $s$. Integrating both sides over $M$ and applying (\ref{eq04021}), (\ref{c0key}),(\ref{DC01}), we have
\begin{equation}\label{eq0411}
\begin{split}
\int_{M} |Ds|_{H_{0},\omega}^{2}\frac{\omega^n}{n!}&\leq \tilde{C}\int_M
\langle \Psi(s)(Ds),Ds\rangle_{H_0,\omega}\frac{\omega^n}{n!}\\
&=\tilde{C}\int_M 4\ (\mathrm{tr}(\Phi(H_0)s)-\varepsilon |s|_{H_{0}}^{2})\frac{\omega^n}{n!}\\
& \leq \tilde{C}\frac{4}{\varepsilon }\cdot \sup_{M}|\Phi(H_0)|_{H_0}^{2}\cdot \mathrm{Vol}(M, g).\\
\end{split}
\end{equation}
This completes the proof of (\ref{eq0431}).
\end{proof}

\section{The related heat flow and the perturbed equation on the non-compact Gauduchon manifold}

In this section, we study the existence of long-time solutions to the heat flow equation (\ref{Flow2}) and the solvability of the perturbed equation (\ref{eq}) on a class of non-compact Gauduchon manifold $(M, g)$. Assume that $(M, g)$ satisfies the Assumption 1 and the Assumption 2.
Let $H_{0}$ be an initial Hermitian metric on the vector bundle $E$.

We fix a real number $\varphi$ and define the compact subset $M_{\varphi}=\{x\in M |\phi(x)\leq \varphi \}$, where $\phi$ is the exhaustion function on $M$. Consider the following Dirichlet boundary condition
\begin{equation}\label{D1}
H|_{\partial M_{\varphi}}=H_0|_{\partial M_{\varphi}}.
\end{equation}
According to Proposition \ref{compactlongtime}, on each $M_{\varphi}$, there exists a unique long-time solution $H_{\varphi}(t)$  to the following Dirichlet problem
\begin{equation} \label{Flow282}
\begin{cases}
H_{\varphi}^{-1}\frac{\partial H_{\varphi}}{\partial t}=4(\sqrt{-1}\Lambda_{\omega } G_{H_{\varphi}}-\lambda \cdot \textmd{Id}_E-\varepsilon \log (H_0^{-1}H_{\varphi})),\\
H_{\varphi}(0)=H_0,\\
H_{\varphi}|_{\partial M_{\varphi}}=H_0|_{\partial M_{\varphi}},
\end{cases}
\end{equation}
for $0\leq t<+\infty$.

\begin{lem} [\protect Lemma 6.7 in {\cite{SIM}}] \label{SIMLEM}
Let $u$ be a function on $M_{\varphi}\times[0,T]$ satisfying
\begin{equation} \label{FC111}
\begin{cases}
(\widetilde{\Delta}-\frac{\partial}{\partial t})u\geq 0,\\ \ \ \ u|_{t=0}=0,
\end{cases}
\end{equation}
and assume there is a bound $\sup\limits_{M_{\varphi}}u\leq C_1$. Then we have
$$u(x,t)\leq \frac{C_1}{\varphi}(\phi(x)+C_2t),$$
where $C_2$ is the bound of $\widetilde{\Delta}\phi$ in Assumption 2.

\end{lem}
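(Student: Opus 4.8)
The plan is to follow the now-standard Simpson-type argument (as in Lemma~6.7 of \cite{SIM}), proving the estimate by comparison with an explicit supersolution built from the exhaustion function $\phi$. First I would observe that on the compact domain $M_{\varphi}\times[0,T]$ we may construct a comparison function
\[
w(x,t)=\frac{C_1}{\varphi}\bigl(\phi(x)+C_2 t\bigr),
\]
where $C_2$ is the uniform bound for $\widetilde{\Delta}\phi$ supplied by Assumption~2. The point is that $w$ is, by construction, a supersolution for the heat operator $\widetilde{\Delta}-\frac{\partial}{\partial t}$: indeed
\[
\Bigl(\widetilde{\Delta}-\frac{\partial}{\partial t}\Bigr)w=\frac{C_1}{\varphi}\bigl(\widetilde{\Delta}\phi-C_2\bigr)\leq 0,
\]
using $\widetilde{\Delta}\phi\leq C_2$, while $u$ is a subsolution by the hypothesis $(\widetilde{\Delta}-\frac{\partial}{\partial t})u\geq 0$. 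Hence $u-w$ is a subsolution.

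Next I would check the function $u-w$ on the parabolic boundary of $M_{\varphi}\times[0,T]$, which consists of the bottom $\{t=0\}$ and the lateral boundary $\partial M_{\varphi}\times[0,T]$. On the bottom, $u|_{t=0}=0$ and $w\geq 0$ (here one uses that $\phi$ is non-negative, per Assumption~2), so $u-w\leq 0$. On the lateral boundary, $\phi(x)=\varphi$, so $w(x,t)=\frac{C_1}{\varphi}(\varphi+C_2 t)=C_1+\frac{C_1 C_2}{\varphi}t\geq C_1\geq u(x,t)$, using the assumed global bound $\sup_{M_{\varphi}}u\leq C_1$. Therefore $u-w\leq 0$ on the entire parabolic boundary.

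Then the conclusion follows from the parabolic maximum principle applied to the subsolution $u-w$ on the compact domain $M_{\varphi}\times[0,T]$: the maximum of $u-w$ is attained on the parabolic boundary, hence $u-w\leq 0$ throughout, i.e.
\[
u(x,t)\leq \frac{C_1}{\varphi}\bigl(\phi(x)+C_2 t\bigr),
\]
which is exactly the claimed inequality. I do not anticipate a serious obstacle here; the only care needed is to make sure the sign conventions for $\widetilde{\Delta}$ are consistent with the maximum principle (recall $\widetilde{\Delta}f=2g^{\alpha\bar\beta}\partial^2 f/\partial z^\alpha\partial\bar z^\beta$, so it behaves like a positive-definite Laplacian for the maximum principle) and that the bottom-boundary comparison uses the non-negativity of $\phi$. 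The result is essentially a verbatim transcription of Simpson's lemma to the present setting, so this step is routine.
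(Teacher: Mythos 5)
Your argument is correct and is precisely the comparison-function proof of Simpson's Lemma 6.7, which the paper cites without reproving; the supersolution $w=\frac{C_1}{\varphi}(\phi+C_2t)$, the parabolic-boundary check, and the maximum principle are exactly the intended route. The only detail worth making explicit is that $C_1\geq 0$ (automatic here, since $u|_{t=0}=0$ forces $\sup u\geq 0$), which you implicitly use when asserting $w\geq C_1$ on the lateral boundary.
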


\medskip

In what follows, a constant $C$ is assumed to exist such that
 $$\sup_{M}|\Phi(H_0)|_{H_0}\leq C.$$

$\bullet$ $\textbf{Step 1} \ $ \ \, We will obtain the $C^{0}$-convergence of $H_{\varphi}$ over any compact subset $\Omega\subset M$ within a finite time interval. For every $\Omega \subset M$, there exists a constant $\varphi_{0}$ such that $\Omega \subset M_{\varphi_{0}}$.
Suppose $H_{\varphi_{1}}(t)$ and $H_{\varphi_2}(t)$ be the long-time solutions for the heat flow equation (\ref{Flow282}) with the Dirichlet boundary condition (\ref{D1}), where $\varphi_{0}<\varphi_{1}<\varphi_{2}$. Define the function $\nu=\sigma(H_{\varphi_{1}},H_{\varphi_2})$. Proposition \ref{noncompactc0} provides a uniform bound on $\nu$, and $\nu$ satisfies (\ref{FC111}). Applying Lemma \ref{SIMLEM}, we obtain
\begin{equation}
    \sigma(H_{\varphi}, H_{\varphi_1}) \leq C_{1} \frac{(\varphi_{0} + C_{2} T)}{\varphi}
    \quad \text{on } M_{\varphi_{0}} \times [0, T].
\end{equation}
Consequently, $H_{\varphi}$ is a Cauchy sequence on $M_{\varphi_{0}}\times [0, T]$ as $\varphi\rightarrow \infty$.

$\bullet$ $\textbf{Step 2}$ \ \, We will establish the $C^{\infty}$-convergence of $H_{\varphi}$ on any compact set $\Omega$. Proposition \ref{noncompactc0} and Proposition \ref{noncompactc1} provide uniform $C^0$-bounds and local $C^1$-estimates for $H_{\varphi}(t)$. Using the standard Schauder estimate of the parabolic equation, we can derive the local uniform $C^{\infty}$-estimate of $H_{\varphi }(t)$. However, it is important to note that utilizing the parabolic Schauder estimate, we only obtain the uniform $C^{\infty}$-estimate of $h(t)$ on $M_{\varphi}\times[\tau,T]$, where $\tau>0$, and the estimate depends on $\tau^{-1}$. To address this, we employ the maximum principle to obtain a local uniform bound on the curvature $|G_H|_H$, followed by elliptic estimates to derive local uniform $C^{\infty}$-estimates. This step is omitted here, as it closely follows the arguments in \protect{\cite[Lemma 2.5]{LZZ}}.
By selecting a subsequence $\varphi \rightarrow +\infty $, we conclude that $H_{\varphi}(t)$  converge in $C_{loc}^{\infty}$-topology to a long-time solution $H(t)$ for the heat flow equation (\ref{Flow2}) on $M$. Thus, we derive the following theorem.

\begin{thm} \label{noncompactthm33}
Let $(M,g)$ be a non-compact Gauduchon manifold satisfying the Assumption 1 and the Assumption 2, $(E,D)$ be a projectively flat vector bundle equipped with an initial Hermitian metric $H_0$ over $M$. Assume that $H_0$ satisfies $$\sup\limits_{M}|\sqrt{-1}\Lambda_{\omega } G_{H_{0}}|_{H_0}<+\infty.$$ Then, the following hold:

(1) for $\varepsilon \geq 0$, the flow \eqref{Flow2} admits a long-time solution $H(t)$  on the entire manifold $M$ with the initial Hermitian metric $H_0$.

(2) for $\varepsilon > 0$,
\begin{equation}\label{c0key2}\sup_{(x,t)\in M\times[0,+\infty )}|\log h|_{H_0}(x,t)\leq \frac{1}{\varepsilon}\sup_{M}|\Phi(H_0)|_{H_0},\end{equation}
and
\begin{equation} \label{eq122431}
\|Ds\|_{L^2(M)}=\|D_{H_0}^{c}s\|_{L^2(M)}\leq C(\varepsilon^{-1},\Phi(H_0),\mathrm{Vol}(M)),
\end{equation}
where $s=\log (H_{0}^{-1}H)$.
\end{thm}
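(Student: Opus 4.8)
The plan is to prove Theorem \ref{noncompactthm33} by exhausting the non-compact manifold $M$ by the compact sets $M_\varphi = \{\phi \le \varphi\}$, solving the Dirichlet problem \eqref{Flow282} on each $M_\varphi$ via Proposition \ref{compactlongtime}, and then passing to the limit $\varphi \to \infty$. Part (1): first I would fix a compact $\Omega \subset M$ with $\Omega \subset M_{\varphi_0}$ and, for $\varphi_0 < \varphi_1 < \varphi_2$, compare the two solutions $H_{\varphi_1}(t)$ and $H_{\varphi_2}(t)$ using the Donaldson distance $\nu = \sigma(H_{\varphi_1}, H_{\varphi_2})$. By Proposition \ref{P243}, $\nu$ is a subsolution of the heat operator, it vanishes at $t=0$, and by the $C^0$-bound of Proposition \ref{noncompactc0} it is uniformly bounded; hence Lemma \ref{SIMLEM} gives $\sigma(H_{\varphi_2}, H_{\varphi_1}) \le C_1(\phi + C_2 t)/\varphi_1$ on $M_{\varphi_0} \times [0,T]$. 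This shows $\{H_\varphi(t)\}$ is $C^0$-Cauchy on each $M_{\varphi_0} \times [0,T]$, so it converges in $C^0_{loc}$. Then I would invoke Proposition \ref{noncompactc0} (uniform $C^0$) and Proposition \ref{noncompactc1} (local $C^1$ bound on $\psi_H^{1,0}$, hence on $h$), feed these into parabolic Schauder estimates to get local $C^\infty$ bounds, upgrade near $t=0$ as in \cite[Lemma 2.5]{LZZ} by a maximum principle bound on $|G_H|_H$ followed by elliptic estimates, and extract a diagonal subsequence converging in $C^\infty_{loc}$ to a long-time solution $H(t)$ of \eqref{Flow2} on $M$.

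Part (2), the $C^0$-estimate \eqref{c0key2}: the bound $|\log h_\varphi|_{H_0} \le \varepsilon^{-1} \max_{M_\varphi} |\Phi(H_0)|_{H_0} \le \varepsilon^{-1} \sup_M |\Phi(H_0)|_{H_0}$ holds on each $M_\varphi$ by Proposition \ref{noncompactc0}, with a bound uniform in $\varphi$; passing to the $C^\infty_{loc}$ limit preserves the pointwise inequality, giving \eqref{c0key2} on all of $M$. The bound is independent of $T$ because the maximum-principle argument in Proposition \ref{noncompactc0} is time-independent.

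Part (2), the $L^2$-estimate \eqref{eq122431}: here I would first apply the identity of Proposition \ref{idbundle01}(1) on each compact $M_\varphi$ — valid since $H_\varphi|_{\partial M_\varphi} = H_0|_{\partial M_\varphi}$, so $s_\varphi = \log(H_0^{-1}H_\varphi)$ vanishes on the boundary — together with the algebraic comparison $|D s_\varphi|^2_{H_0,\omega} \le \tilde C \langle \Psi(s_\varphi)(Ds_\varphi), Ds_\varphi\rangle_{H_0,\omega}$, where $\tilde C$ depends only on the $L^\infty$-bound $\varepsilon^{-1}\sup_M|\Phi(H_0)|_{H_0}$. Exactly as in \eqref{eq0411}, this yields
\begin{equation*}
\int_{M_\varphi} |D s_\varphi|^2_{H_0,\omega} \frac{\omega^n}{n!} \le \frac{4\tilde C}{\varepsilon}\, \Big(\sup_M |\Phi(H_0)|_{H_0}\Big)^2 \,\mathrm{Vol}(M,g),
\end{equation*}
a bound uniform in $\varphi$ (using Assumption 1, the finiteness of $\mathrm{Vol}(M,g)$). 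Since $H_\varphi \to H$ in $C^\infty_{loc}$, for any fixed compact $K$ we have $\int_K |Ds|^2_{H_0,\omega}\frac{\omega^n}{n!} = \lim_\varphi \int_K |Ds_\varphi|^2_{H_0,\omega}\frac{\omega^n}{n!} \le 4\tilde C \varepsilon^{-1}(\sup_M|\Phi(H_0)|_{H_0})^2 \mathrm{Vol}(M,g)$, and letting $K \nearrow M$ (monotone convergence) gives \eqref{eq122431}; the equality $\|Ds\|_{L^2} = \|D^c_{H_0}s\|_{L^2}$ is pointwise and passes to the limit.

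The main obstacle is Step 2: obtaining local $C^\infty$-estimates that are \emph{uniform up to $t=0$} and uniform in $\varphi$. The parabolic Schauder theory only gives interior estimates on $M_\varphi \times [\tau, T]$ with dependence on $\tau^{-1}$, which is insufficient for passing to the limit over a fixed time interval starting at $0$; the resolution, following \cite[Lemma 2.5]{LZZ}, is to first derive a local uniform bound on $|G_{H_\varphi}|_{H_\varphi}$ via a maximum principle applied to a suitable test function (combining the evolution inequality \eqref{dec} with a cut-off and the $C^0$-bound), and only then invoke elliptic rather than parabolic regularity at each time slice. A secondary technical point is ensuring all constants $C_1, C_2, \tilde C$ in the exhaustion argument depend only on $\sup_M|\Phi(H_0)|_{H_0}$, $\varepsilon^{-1}$, $\mathrm{Vol}(M,g)$ and the geometry of the relevant compact pieces — not on $\varphi$ — which is exactly what Assumptions 1 and 2 and the hypothesis $\sup_M|\sqrt{-1}\Lambda_\omega G_{H_0}|_{H_0} < +\infty$ are designed to guarantee.
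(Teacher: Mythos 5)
Your proposal follows essentially the same route as the paper: exhaustion by the compact domains $M_{\varphi}$, the Dirichlet heat flows of Proposition \ref{compactlongtime}, the $C^0$-Cauchy property via Proposition \ref{P243}, Proposition \ref{noncompactc0} and Lemma \ref{SIMLEM}, and then local $C^\infty$-estimates from Propositions \ref{noncompactc0}--\ref{noncompactc1} together with the \cite[Lemma 2.5]{LZZ}-style upgrade near $t=0$ (maximum principle bound on the curvature, then elliptic regularity). Parts (1) and the $C^0$-bound \eqref{c0key2} are handled exactly as in the paper.

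One small imprecision in your $L^2$-estimate: the computation \eqref{eq0411} that you invoke ``exactly'' uses $\Phi(H)=0$, which holds for the solution of the perturbed \emph{elliptic} problem but not for the heat-flow solution $H_\varphi(t)$ at finite time. Applying the identity of Proposition \ref{idbundle01} to $H_\varphi(t)$ leaves the extra term $-4\int_{M_\varphi}\mathrm{tr}\bigl(\Phi(H_\varphi(t))\,s_\varphi\bigr)\frac{\omega^n}{n!}$ on the right-hand side; this is harmless, since \eqref{dec} together with the maximum principle gives $|\Phi_{\varepsilon}(H_\varphi(t))|\leq\sup_M|\Phi(H_0)|_{H_0}$ and the $C^0$-bound controls $|s_\varphi|$, so the term is at most $4\varepsilon^{-1}\bigl(\sup_M|\Phi(H_0)|_{H_0}\bigr)^2\mathrm{Vol}(M,g)$ and the constant retains the claimed form $C(\varepsilon^{-1},\Phi(H_0),\mathrm{Vol}(M))$. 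You should record this one extra estimate; otherwise the argument is complete.
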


Here, we will investigate the solvability of the perturbed equation (\ref{eq}) on a non-compact Gauduchon manifold $M$. Let $\{M_{\varphi}\}^{\infty}_{\varphi=1}$ denote an exhaustion sequence of compact sub-domains of $M$. Assume that $(E,D)$ be a projectively flat vector bundle with a fixed Hermitian metric $H_0$. According to Theorem \ref{comthm11}, we know that there exists a Hermitian metric $H_{\varphi}(x)$ such that
\begin{equation*}\begin{cases}
\sqrt{-1}\Lambda_{\omega } G_{H_{\varphi}}-\lambda \cdot \textmd{Id}_E-\varepsilon \log (H_0^{-1}H_{\varphi})=0, \forall x\in M_{\varphi},\\
H_{\varphi}(x)|_{\partial M_{\varphi}}=H_0(x).
\end{cases}
\end{equation*}
To extend $H_{\varphi}(x)$ to the entire manifold $M$, we require certain priori estimates. The key step lies in establishing the $C^0$-estimate.
Define $h_{\varphi}=H_0^{-1}H_{\varphi}$. Theorem \ref{comthm11} yields the following estimate:
\begin{equation*}
\sup_{x\in M_{\varphi}}|\log h_{\varphi}|_{H_0}(x)\leq \frac{1}{\varepsilon}\max_{M_{\varphi}}|\Phi(H_0)|_{H_0}.
\end{equation*}
For any compact subset $\Omega\subset M$, there exists a $\varphi_{0}$ such that $\Omega\subset M_{\varphi_{0}}\subset M_{\varphi}$.
By Proposition \ref{noncompactc1}, we obtain the following estimate:
\begin{equation*}\label{CC10}
\sup_{x\in \Omega}|\psi_{H_{\varphi}}^{1,0}|_{H_0,\omega}\leq \hat{C}_1, \quad \text{for all } \varphi >\varphi_{0},
\end{equation*}
where $\hat{C}_{1}$ denotes a uniform constant independent of the choice of $\varphi$. By combining the perturbed equation (\ref{eq}) with standard elliptic theory, we obtain uniform local higher order estimates. By taking a subsequence, $H_{\varphi}$ converge to a limit metric $H_{\infty}$ in the $C_{loc}^{\infty}$-topology, which solves the equation (\ref{eq}) on the entire manifold $M$. So we have following theorem.

\begin{thm} \label{noncompactthm}
Let $(M,g)$ be a non-compact Gauduchon manifold satisfying the Assumption 1 and the Assumption 2, $(E,D)$ be a projectively flat vector bundle equipped with a fixed Hermitian metric $H_0$ over $M$. Assume that $H_0$ satisfies $$\sup\limits_{M}|\sqrt{-1}\Lambda_{\omega } G_{H_{0}}|_{H_0}<+\infty.$$ Then,
for any $\varepsilon>0$, there exists a Hermitian metric $H$ such that
$$\sqrt{-1}\Lambda_{\omega } G_{H}-\lambda \cdot \mathrm{Id}_E-\varepsilon \log (H_0^{-1}H)=0,$$
with the following estimates:
\begin{equation} \label{eq0404}
\sup_{x\in M}|\log H_0^{-1}H|_{H_0}(x)\leq \frac{1}{\varepsilon}\sup_{M}|\Phi(H_0)|_{H_0}
\end{equation}
and
\begin{equation} \label{eq0405}
\|D(\log H^{-1}_0H)\|_{L^2}\leq C(\varepsilon^{-1},\Phi(H_0),\mathrm{Vol}(M)).
\end{equation}
Moreover, if $H_{0}$ satisfies (\ref{trace31}),
then $\mathrm{tr} \log (H^{-1}_0H)=0$ and $H$ also satisfies (\ref{trace31}).
\end{thm}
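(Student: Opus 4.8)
\textbf{Proof proposal for Theorem \ref{noncompactthm}.}

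The plan is to use the exhaustion $\{M_\varphi\}_{\varphi=1}^\infty$ of $M$ by compact sub-domains with smooth boundary, solve the Dirichlet problem \eqref{dc11} on each $M_\varphi$ via Theorem \ref{comthm11}, and then pass to the limit $\varphi \to +\infty$ using uniform local estimates. First I would invoke Theorem \ref{comthm11} to produce, for each $\varphi$, a Hermitian metric $H_\varphi$ on $E|_{M_\varphi}$ solving $\sqrt{-1}\Lambda_\omega G_{H_\varphi} - \lambda \cdot \mathrm{Id}_E - \varepsilon \log(H_0^{-1}H_\varphi) = 0$ with $H_\varphi|_{\partial M_\varphi} = H_0|_{\partial M_\varphi}$. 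The crucial point is that the $C^0$-estimate \eqref{DC01} from Theorem \ref{comthm11}, namely $\sup_{M_\varphi} |\log h_\varphi|_{H_0} \le \varepsilon^{-1} \sup_{M_\varphi} |\Phi(H_0)|_{H_0}$, together with the global hypothesis $\sup_M |\sqrt{-1}\Lambda_\omega G_{H_0}|_{H_0} < +\infty$, gives a bound on $\sup_{M_\varphi}|\Phi(H_0)|_{H_0}$ that is uniform in $\varphi$ (since $|\Phi(H_0)|_{H_0} = |\sqrt{-1}\Lambda_\omega G_{H_0} - \lambda \cdot \mathrm{Id}_E|_{H_0}$ is bounded on all of $M$). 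Hence $\sup_{M_\varphi}|\log h_\varphi|_{H_0} \le \varepsilon^{-1}\sup_M|\Phi(H_0)|_{H_0}$ uniformly, which is exactly the desired bound \eqref{eq0404} in the limit, and also shows the $H_\varphi$ are uniformly mutually bounded with $H_0$.

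Next I would upgrade the uniform $C^0$-bound to uniform local higher-order bounds. For any fixed compact $\Omega \subset M$, choose $\varphi_0$ with $\Omega \subset M_{\varphi_0}$; then for all $\varphi > \varphi_0$ the metric $H_\varphi$ is defined on a neighborhood of $\Omega$ and satisfies the perturbed equation there, so Proposition \ref{noncompactc1} (whose hypotheses are met because of the uniform $C^0$-bound and because $\Omega$ stays a fixed positive distance from $\partial M_\varphi$) yields a uniform interior bound $\sup_\Omega |\psi^{1,0}_{H_\varphi}|_{H_0,\omega} \le \hat C_1$ independent of $\varphi$. This controls the first derivatives of $h_\varphi$ locally. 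Viewing the perturbed equation as a second-order elliptic system in $h_\varphi$ that is quadratic in the first derivatives, standard elliptic bootstrapping (interior Schauder and $L^p$ estimates) then gives uniform $C^\infty_{\mathrm{loc}}$ bounds on the $h_\varphi$. By a diagonal argument over an exhausting sequence of compact sets and Arzel\`a--Ascoli, a subsequence of $H_\varphi$ converges in $C^\infty_{\mathrm{loc}}$ to a smooth Hermitian metric $H$ on $E$ over all of $M$; passing to the limit in the equation shows $\sqrt{-1}\Lambda_\omega G_H - \lambda \cdot \mathrm{Id}_E - \varepsilon \log(H_0^{-1}H) = 0$, and the uniform $C^0$-bound passes to the limit to give \eqref{eq0404}.

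For the $L^2$-estimate \eqref{eq0405}, I would combine \eqref{eq04021} of Proposition \ref{idbundle01} with the inequality $|Ds|^2_{H_0,\omega} \le \tilde C \langle \Psi(s)(Ds), Ds\rangle_{H_0,\omega}$ (valid since $s = \log(H_0^{-1}H)$ is uniformly bounded in $L^\infty$ by \eqref{eq0404}); this is precisely the computation \eqref{eq0411} carried out in the proof of Theorem \ref{comthm11}, and it yields $\|Ds\|_{L^2(M)}^2 \le \tilde C \cdot 4\varepsilon^{-1} (\sup_M |\Phi(H_0)|_{H_0})^2 \cdot \mathrm{Vol}(M,g)$. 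One must first check that the hypotheses of Proposition \ref{idbundle01}(2) hold in the limit — the exhaustion-function integrability condition is part of Assumptions 1 and 2, the condition $|\mathrm{d}\omega^{n-1}|_g \in L^2(M)$ is assumed, $s \in L^\infty(M)$ follows from \eqref{eq0404}, and $D_{H_0}^c s \in L^2(M)$ follows because the uniform local $L^2$-bounds on $D h_\varphi$ together with Fatou's lemma give an a priori $L^2$-bound on $Ds$ over $M$. Finally, the trace statement follows by the same maximum-principle argument as in Theorem \ref{comthm11}: if $H_0$ satisfies \eqref{trace31}, then on each $M_\varphi$ we have $\mathrm{tr}(s_\varphi) = 0$ and $\mathrm{tr}(\sqrt{-1}\Lambda_\omega G_{H_\varphi} - \lambda \cdot \mathrm{Id}_E) = 0$, and both pass to the $C^\infty_{\mathrm{loc}}$-limit. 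The main obstacle I anticipate is not any single estimate but rather the careful verification that the local estimates of Propositions \ref{noncompactc0}, \ref{noncompactc1} and Theorem \ref{comthm11} are genuinely uniform in $\varphi$ — in particular that the constants in Proposition \ref{noncompactc1} depend only on $\overline C_0$, on $d^{-1}$ for $d = \mathrm{dist}(\Omega, \partial M_\varphi)$ (which stays bounded below as $\varphi \to \infty$ for fixed $\Omega$), and on the fixed geometry of $\widetilde\Omega$, and hence not on $\varphi$ — so that the diagonal subsequence extraction is legitimate.
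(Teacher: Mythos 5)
Your proposal is correct and follows essentially the same route as the paper: exhaust $M$ by the compact domains $M_{\varphi}$, solve the Dirichlet problem on each via Theorem \ref{comthm11}, and use the uniform $C^0$-bound \eqref{DC01} together with the interior $C^1$-estimate of Proposition \ref{noncompactc1} and elliptic bootstrapping to extract a $C^{\infty}_{\mathrm{loc}}$-convergent subsequence solving the equation on all of $M$. The only small deviation is in how you obtain \eqref{eq0405}: rather than re-applying Proposition \ref{idbundle01}(2) to the limit metric (which would require $|\mathrm{d}\omega^{n-1}|_{g}\in L^2(M)$, a hypothesis not present in this theorem), it is cleaner to pass the compact-domain estimate \eqref{eq0431} to the limit via Fatou's lemma, as you yourself hint when justifying $D_{H_0}^{c}s\in L^2(M)$.
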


\section{Proof of the theorems}

Let $(M,g)$ be a non-compact Gauduchon manifold satisfying the Assumptions 1,2,3, and $|\mathrm{d}\omega^{n-1}|_{g}\in L^2(M)$. Consider a projectively flat vector bundle $(E,D)$ over $M$ equipped with a fixed background Hermitian metric $H_0$ such that $\sup\limits_M|\Lambda_{\omega } G_{H_0}|_{H_0}<+\infty$. According to \cite[Proposition 4.3]{cjzpzz}, we can solve the following Poisson equation on $(M,g)$:
\begin{equation*} \label{lastsection41}
\sqrt{-1}\Lambda_{\omega } \partial\bar{\partial} f=
\frac{1}{r}\textmd{tr}(\sqrt{-1}\Lambda_{\omega } G_{H_0}-\lambda_{H_{0},\omega}\cdot \textmd{Id}_E),
\end{equation*}
where
$$
\lambda_{H_{0},\omega}=\frac{\int_M \sqrt{-1}\textmd{tr}(\Lambda_{\omega } G_{H_{0}})
\frac{\omega^{n}}{n!}}{\textmd{rank}(E)\textmd{Vol}(M)}.
$$

By taking a conformal transformation $\widehat{H_{0}}=e^{-2f}H_{0}$, we can verify that $\widehat{H_{0}}$ satisfies
\begin{equation} \label{lastsection113}
\textmd{tr}( \sqrt{-1}\Lambda_{\omega } G_{\widehat{H_{0}}}-\lambda_{H_{0},\omega} \cdot \textmd{Id}_E)=0.
\end{equation}
Thus, we may assume that the fixed background metric $H_{0}$ satisfies (\ref{lastsection113}).
By applying Theorem \ref{noncompactthm}, the following perturbed equation can be solved:
\begin{equation} \label{lastsectionww1}
\sqrt{-1}\Lambda_{\omega } G_{H_{\varepsilon }}
-\lambda_{H_{0},\omega} \cdot \textmd{Id}_E-\varepsilon \log h_{\varepsilon }=0,
\end{equation}
where $h_{\varepsilon }=H_{0}^{-1}H_{\varepsilon }=e^{s_{\varepsilon }}$. Since the initial Hermitian metric $H_{0}$ satisfies (\ref{lastsection113}), we obtain
\begin{equation*}\label{trace4}
\log \det (h_{\varepsilon })=\mathrm{tr}(s_{\varepsilon})=0
\end{equation*}
 and
\begin{equation*}
\textmd{tr}( \sqrt{-1}\Lambda_{\omega } G_{H_{\varepsilon }}
-\lambda_{H_{0},\omega} \cdot \textmd{Id}_E)=0.
\end{equation*}

To proceed, we need to establish the following lemmas.
\begin{lem}  \label{C0ofdistance}
 Let $(M, g)$ be a non-compact Gauduchon manifold satisfying the Assumptions 1,2,3, $(E,D)$ be a projectively flat vector bundle equipped with a fixed background Hermitian metric $H_0$
over $M$, and $|\mathrm{d}\omega^{n-1}|_{g}\in L^2(M)$. Suppose $H_{\varepsilon}$ is a solution of the perturbed equation (\ref{eq}). Define $h_{\varepsilon}=H_0^{-1}H_{\varepsilon}={e}^{s_{\varepsilon}}$, then we have
\begin{equation} \label{seq1}
-4 \int_M  {\rm tr} (\Phi(H_{0})s_{\varepsilon})\frac{\omega^n}{n!}+\int_{M}\langle \Psi(s_{\varepsilon})D(s_{\varepsilon}),D(s_{\varepsilon})\rangle_{H_{0},\omega}\frac{\omega^n}{n!}=-4\varepsilon\|s_{\varepsilon}\|^{2}_{L^2(M)}.
\end{equation}
\end{lem}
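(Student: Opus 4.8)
The identity (\ref{seq1}) is exactly the identity (\ref{eq04021}) from Proposition \ref{idbundle01}, specialized to the case $\Phi(H_\varepsilon)=0$. So the plan is to verify that the hypotheses of Proposition \ref{idbundle01}, case (2), are met by $H_0$ and $H_\varepsilon=H_0 e^{s_\varepsilon}$, apply it, and then simplify using the perturbed equation.

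First I would record the set-up: since $H_\varepsilon$ solves (\ref{eq}), we have $\Phi_\varepsilon(H_\varepsilon):=\sqrt{-1}\Lambda_\omega G_{H_\varepsilon}-\lambda_{H_0,\omega}\cdot\mathrm{Id}_E-\varepsilon\log h_\varepsilon=0$, which by the definition (\ref{wwwww2}) of $\Phi$ means precisely $\Phi(H_\varepsilon)=0$. Substituting this into (\ref{eq04021}) collapses the left-hand side term $4\int_M\mathrm{tr}(\Phi(H_\varepsilon)s_\varepsilon)\,\frac{\omega^n}{n!}$ to zero, and moving the $4\varepsilon\|s_\varepsilon\|_{L^2}^2$ term and the $H_0$-term across gives exactly (\ref{seq1}). (Here I use $Ds_\varepsilon=D_{H_0}^c s_\varepsilon$ in $L^2$-norm, which is the content of the $L^2$ estimate (\ref{eq122431})/(\ref{eq0405}) in Theorem \ref{noncompactthm}, together with $\langle\cdot,\cdot\rangle_{H_0,\omega}$ being the natural inner product.)

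The main work, then, is checking the three analytic hypotheses of case (2) of Proposition \ref{idbundle01}: that $M$ carries an exhaustion function $\phi$ with $\int_M|\widetilde\Delta\phi|\frac{\omega^n}{n!}<+\infty$; that $|\mathrm{d}\omega^{n-1}|_g\in L^2(M)$; and that $s_\varepsilon\in L^\infty(M)$ and $D_{H_0}^c s_\varepsilon\in L^2(M)$. The second is a standing hypothesis of the lemma. For the first, Assumption 2 supplies a non-negative exhaustion function $\phi$ with $\sqrt{-1}\Lambda_\omega\partial\bar\partial\phi$ bounded, hence $\widetilde\Delta\phi=-2\sqrt{-1}\Lambda_\omega\bar\partial\partial\phi$ bounded, and Assumption 1 gives $\mathrm{Vol}(M,g)<+\infty$, so $\int_M|\widetilde\Delta\phi|\frac{\omega^n}{n!}\le\|\widetilde\Delta\phi\|_{L^\infty}\mathrm{Vol}(M)<+\infty$. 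For the third, the $C^0$-estimate (\ref{eq0404}) of Theorem \ref{noncompactthm} gives $\sup_M|s_\varepsilon|_{H_0}\le\varepsilon^{-1}\sup_M|\Phi(H_0)|_{H_0}<+\infty$ since $\sup_M|\Lambda_\omega G_{H_0}|_{H_0}<+\infty$ by hypothesis; and the $L^2$-estimate (\ref{eq0405}) gives $D(\log h_\varepsilon)=Ds_\varepsilon\in L^2$, equivalently $D_{H_0}^c s_\varepsilon\in L^2$.

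I expect no serious obstacle here: the lemma is essentially a bookkeeping corollary of Proposition \ref{idbundle01} and Theorem \ref{noncompactthm}. The only point requiring a little care is confirming that the solution $H_\varepsilon$ produced by Theorem \ref{noncompactthm} is indeed the object to which the a priori estimates (\ref{eq0404})--(\ref{eq0405}) apply — i.e.\ that the limit metric $H_\infty$ obtained there inherits the $C^0$ and $L^2$ bounds — but this is built into the statement of that theorem, so the argument goes through directly.
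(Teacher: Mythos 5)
Your proposal is correct and follows exactly the paper's own (one-line) proof: apply Proposition \ref{idbundle01}, case (2), with its hypotheses supplied by Assumptions 1--2 and the estimates (\ref{eq0404})--(\ref{eq0405}) of Theorem \ref{noncompactthm}, then use $\Phi(H_\varepsilon)=0$ to collapse the first term of (\ref{eq04021}). The extra bookkeeping you supply is precisely what the paper leaves implicit.
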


\begin{proof}
A direct application of Proposition \ref{idbundle01} and Theorem \ref{noncompactthm} yields (\ref{seq1}).
\end{proof}

\begin{lem} \label{lastsectionlemma}
Suppose that $(M,g)$ is a non-compact Gauduchon manifold as before, then there exists positive constants $C_1$ and $C_2$ independent of $\varepsilon$ such that
\begin{equation}\label{L1}\sup_{M}|s_{\varepsilon}|\leq  C_1\| s_{\varepsilon}\|_{L^2(M)}+C_2.\end{equation}
\end{lem}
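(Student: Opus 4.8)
The plan is to combine the integral identity from Lemma~\ref{C0ofdistance} with Assumption 3. The left-hand side of \eqref{seq1} controls $\int_M \langle \Psi(s_\varepsilon)D s_\varepsilon, D s_\varepsilon\rangle_{H_0,\omega}\frac{\omega^n}{n!}$, and since $\Psi(x,y)>0$ everywhere, on a bounded region of $(x,y)$-values we have $\langle\Psi(s_\varepsilon)(Ds_\varepsilon),Ds_\varepsilon\rangle_{H_0,\omega}\geq c\,|Ds_\varepsilon|^2_{H_0,\omega}$; but a priori we do not yet know $s_\varepsilon$ is bounded, so I instead keep $\Psi$ and observe that from \eqref{seq1} and the Cauchy--Schwarz inequality one gets
\begin{equation*}
\int_M \langle \Psi(s_\varepsilon)D s_\varepsilon, D s_\varepsilon\rangle_{H_0,\omega}\frac{\omega^n}{n!}
\leq 4\sup_M|\Phi(H_0)|_{H_0}\cdot \|s_\varepsilon\|_{L^1(M)}
\leq C\,\|s_\varepsilon\|_{L^2(M)},
\end{equation*}
using that $\mathrm{Vol}(M,g)<\infty$ (Assumption 1). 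So the Dirichlet-type energy of $s_\varepsilon$ is linearly controlled by $\|s_\varepsilon\|_{L^2}$.

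Next I would feed this into Assumption 3 applied to a suitable function built from $s_\varepsilon$. The standard device here (as in Simpson \cite{SIM} and Uhlenbeck--Yau \cite{UY86}) is to set $f = |s_\varepsilon|_{H_0}$ (or, to get a smooth object, $(|s_\varepsilon|^2_{H_0}+\delta)^{1/2}$) and derive a Bochner-type inequality $\sqrt{-1}\Lambda_\omega\partial\bar\partial f \geq -|\Lambda_\omega G_{H_\varepsilon} - \lambda\cdot\mathrm{Id}_E - \varepsilon s_\varepsilon|_{H_0} - (\text{terms in }|Ds_\varepsilon|)$; but since $H_\varepsilon$ solves the perturbed equation \eqref{eq}, the bracket $\sqrt{-1}\Lambda_\omega G_{H_\varepsilon}-\lambda\cdot\mathrm{Id}_E-\varepsilon s_\varepsilon$ vanishes, so in fact one obtains a clean inequality of the form $\sqrt{-1}\Lambda_\omega\partial\bar\partial\,|s_\varepsilon|_{H_0} \geq -C\,|\Phi(H_0)|_{H_0}$ after using \eqref{eq} and \eqref{GHK01} to rewrite $\sqrt{-1}\Lambda_\omega G_{H_\varepsilon}$ in terms of $\sqrt{-1}\Lambda_\omega G_{H_0}$; here $|\Phi(H_0)|_{H_0} = |\sqrt{-1}\Lambda_\omega G_{H_0}-\lambda\cdot\mathrm{Id}_E|_{H_0}$ is bounded by hypothesis by some $A$. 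Then Assumption 3 gives
\begin{equation*}
\sup_M|s_\varepsilon|_{H_0}\leq C(A)\,b\!\left(\int_M |s_\varepsilon|_{H_0}\,\frac{\omega^n}{n!}\right)\leq C(A)\,b\!\left(C\,\|s_\varepsilon\|_{L^2(M)}\right).
\end{equation*}
Since $b$ is increasing with $b(x)=x$ for $x>1$, the right-hand side is $\leq C_1\|s_\varepsilon\|_{L^2(M)}+C_2$ once we absorb the behavior of $b$ on $[0,1]$ (where $b$ is bounded) into the additive constant $C_2$; this yields \eqref{L1}.

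The main obstacle is getting the differential inequality for $|s_\varepsilon|_{H_0}$ into exactly the form required by Assumption 3, i.e.\ $\sqrt{-1}\Lambda_\omega\partial\bar\partial f\geq -A$ with a \emph{uniform} $A$ independent of $\varepsilon$. The subtlety is the $\varepsilon s_\varepsilon$ term: one must check it enters with a favorable sign. Working with $|s_\varepsilon|_{H_0}$ (or its regularization) and using that $\varepsilon\langle s_\varepsilon, s_\varepsilon\rangle_{H_0}\geq 0$, the perturbation term $-\varepsilon|s_\varepsilon|^2_{H_0}$ pushes the Laplacian in the right direction, so no $\varepsilon$-dependence survives in the lower bound. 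A secondary technical point is the non-smoothness of $|s_\varepsilon|_{H_0}$ at its zero set and the need to pass $\delta\to 0$ in the regularization $(|s_\varepsilon|^2_{H_0}+\delta)^{1/2}$; this is handled by the standard Kato-type inequality argument, exactly as in \cite{SIM,UY86}, and I would only sketch it. Once \eqref{L1} is in hand the proof is complete.
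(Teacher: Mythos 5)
Your overall strategy coincides with the paper's: find a scalar function comparable to $|s_\varepsilon|_{H_0}$ whose complex Laplacian is bounded below by a constant $-A$ independent of $\varepsilon$, feed it into Assumption 3, and convert the resulting $L^1$ bound into an $L^2$ bound via Cauchy--Schwarz and the finite volume, absorbing the behaviour of $b$ on $[0,1]$ into $C_2$. The difference is the choice of test function. The paper takes $f=\log(\mathrm{tr}\,h_\varepsilon+\mathrm{tr}\,h_\varepsilon^{-1})$, which is sandwiched between $|s_\varepsilon|$ and $r^{1/2}|s_\varepsilon|$ up to additive constants; Proposition \ref{P2} gives $\widetilde{\Delta}f\geq -4|\Phi(H_0)|_{H_0}-4|\sqrt{-1}\Lambda_\omega G_{H_\varepsilon}-\lambda\,\mathrm{Id}_E|_{H_\varepsilon}$, and the second term equals $4\varepsilon|s_\varepsilon|_{H_\varepsilon}$, which is uniformly bounded by $4\sup_M|\Phi(H_0)|_{H_0}$ thanks to \eqref{eq0404} --- so the uniform lower bound is immediate and your worry about the $\varepsilon s_\varepsilon$ term never arises. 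You instead work directly with $|s_\varepsilon|_{H_0}$ and assert a ``clean'' inequality $\sqrt{-1}\Lambda_\omega\partial\bar\partial|s_\varepsilon|\geq -C|\Phi(H_0)|_{H_0}$ ``exactly as in'' Simpson and Uhlenbeck--Yau. That attribution is inaccurate: those references (and this paper) prove the subsolution estimate for $\log\mathrm{tr}\,h$ precisely to avoid this step. Your inequality is in fact true, but proving it requires more than the Kato inequality: from $\tfrac{1}{2}\widetilde{\Delta}|s_\varepsilon|^2=\langle\sqrt{-1}\Lambda_\omega D(h_\varepsilon^{-1}D^c_{H_0}h_\varepsilon),s_\varepsilon\rangle+\langle\Psi(s_\varepsilon)Ds_\varepsilon,Ds_\varepsilon\rangle$ one must check that $\langle\Psi(s_\varepsilon)Ds_\varepsilon,Ds_\varepsilon\rangle\geq|d|s_\varepsilon||^2$, and since $\Psi(x,y)$ is \emph{not} bounded below by a positive constant when $|s_\varepsilon|$ is a priori unbounded (which is exactly the situation here), one cannot simply compare with $|Ds_\varepsilon|^2$; one has to observe that $d|s_\varepsilon|$ only sees the diagonal part of $Ds_\varepsilon$ in an eigenframe of $s_\varepsilon$, where $\Psi=1$. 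This can be done, but it is the crux, not a footnote, and the paper's choice of test function sidesteps it entirely. Finally, your opening paragraph bounding the Dirichlet energy via \eqref{seq1} is a detour: it is not used anywhere in the $C^0$ estimate (it belongs to the later contradiction argument in the proof of Theorem \ref{theorem1}).
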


\begin{proof}
It is easy to check
\begin{equation*}
\log (\frac{1}{2r}(\mathrm{tr} h_{\varepsilon} + \mathrm{tr} h^{-1}_{\varepsilon}))\leq | s_{\varepsilon }|\leq r^{\frac{1}{2}}\log (\mathrm{tr} h_{\varepsilon} + \mathrm{tr} h^{-1}_{\varepsilon}).
\end{equation*}
According to (\ref{dec}) and (\ref{eq0404}), we can obtain $|\sqrt{-1}\Lambda_{\omega } G_{H_{\varepsilon}}-\lambda \cdot \mathrm{Id}_E|_{H_{\varepsilon}}$ is uniformly bounded.
By Proposition \ref{P2}, and Assumption 3,
we have (\ref{L1}).
\end{proof}

{\bf Proof of Theorem \ref{theorem1}}

Suppose $(E,D)$ is simple, we aim to show that $H_{\varepsilon}$ converge  in the $C_{loc }^{\infty}$-topology to a Hermitian-Poisson metric $H$ as $\varepsilon \rightarrow 0$ via selecting a suitable subsequence. By employing the standard elliptic estimates and the local $C^{1}$-estimates in Proposition \ref{noncompactc1}, it suffices to obtain uniform $C^{0}$-estimate.
According to Lemma \ref{lastsectionlemma}, the crucial step is to derive a uniform $L^{2}$-estimate for $\log h_{\varepsilon }$, i.e., to prove the existence of a constant $C_{3}>0$, independent of $\varepsilon$, such that
\begin{equation}\label{L102}
\|\log h_{\varepsilon }\|_{L^2}= \left(\int_{M}|\log h_{\varepsilon }|^{2}_{H_{\varepsilon}}\frac{\omega^{n}}{n!}\right)^{\frac{1}{2}} \leq C_{3}\quad \text{for }  0<\varepsilon \leq 1. \   \
\end{equation}
We will prove (\ref{L102}) by contradiction. If not, there exists a subsequence $\varepsilon_{j}\rightarrow 0$ such that
\begin{equation*}
    \|\log h_{\varepsilon_{j}} \|_{L^2} \to +\infty \quad \text{as } \   \ j \to +\infty.
\end{equation*}
Denote
$$ s_{\varepsilon_j} = \log h_{\varepsilon_{j}},\   \ l_{j} =  \|s_{\varepsilon_j}\|_{L^2},\    \ \ u_{j}=\frac{s_{\varepsilon_j}}{l_{j}},$$
then we have
\begin{equation*}
\textmd{tr} (u_{j})=0, \ \| u_{j}\|_{L^2}=1.
\end{equation*}
From Lemma \ref{lastsectionlemma}, we obtain
\begin{equation} \label{uc0}
\sup\limits_M|u_{j}|\leq \frac{1}{\ l_{j}}(C_4\ l_{j}+C_5)<C_{6}<+\infty.
\end{equation}

$\bullet$ $\textbf{Step 1}$ \ \ We will demonstrate that $\| u_{j}\|_{L^2_1}$ are uniformly bounded. Since $\| u_{j}\|_{L^2}=1$,  it remains to derive that $\| Du_{j}\|_{L^2}$ are uniformly bounded.

By combining Proposition \ref{idbundle01} and Theorem \ref{noncompactthm}, for every $\varepsilon_{j}$, we obtain
\begin{equation} \label{seqqq212}
-4 \int_M  {\rm tr} (\Phi(H_0)u_{j})\frac{\omega^n}{n!}+l_{j}\int_{M}\langle \Psi(l_{j}u_{j})D(u_{j}),D(u_{j})\rangle_{H_0,\omega}\frac{\omega^n}{n!}=-4\varepsilon\|s_{\varepsilon_j}\|^{2}_{L^2}.
\end{equation}
Consider the function
\begin{equation*}
t\Psi(tx,ty)=
\begin{cases}
\ \ \ \ t,\ \ &x=y;\\
\frac{e^{t(y-x)}-1}{y-x},\ \ &x\neq y.
\end{cases}
\end{equation*}
By \eqref{uc0}, we can assume that $(x,y)\in [-C_{6},C_{6}]\times[-C_{6},C_{6}]$. Observed that
\begin{equation} \label{seq2}
t\Psi(tx,ty)\rightarrow
\begin{cases}
 \ \ \ \frac{1}{x-y},\ \ \ &x>y;\\
\ \ +\infty,\ \ \ &x\leq y,
\end{cases}
\end{equation}
increases monotonically as $t\rightarrow +\infty$. Let $\chi \in C^{\infty} (\mathbb{R}^{2}, \mathbb{R}^+)$ be a smooth function satisfying $\chi(x,y)<\frac{1}{x-y}$ whenever $x>y$. Combining \eqref{seqqq212}, \eqref{seq2} and the arguments in \cite[Lemma 5.4]{SIM}, we derive the following inequality:
\begin{equation} \label{seq3}
-4 \int_M  {\rm tr} (\Phi(H_0)u_{j})\frac{\omega^n}{n!}+\int_{M}\langle \chi(u_{j})D(u_{j}),D(u_{j})\rangle_{H_0,\omega}\frac{\omega^n}{n!}\leq 0, \ \ j\gg 0.
\end{equation}
By choosing $\chi(x,y)=\frac{1}{3C_{6}}$, then we obtain
$$-4 \int_M  {\rm tr} (\Phi(H_0)u_{j})\frac{\omega^n}{n!}+\frac{1}{3C_{6}}\int_{M}| D(u_{j})|^2_{H_0,\omega} \frac{\omega^n}{n!}\leq 0, \ \ j\gg 0.$$
This implies that
$$
\int_{M}| D(u_{j})|^2_{H_0,\omega} \frac{\omega^n}{n!}\leq 12C^2_{6}\sup\limits_M|\Phi(H_0)|_{H_0}\textmd{Vol}(M)<+\infty.
$$
Therefore, the subsequence $u_{j}$ are bounded in $L_1^2$. By the weak compactness of $L_1^2$, we may select a subsequence $\{u_{j_{k}}\}$ converging weakly to some $u_{\infty}$ in $L^2_1$. For simplicity, we still denote this subsequence as $\{u_{j}\}^{\infty}_{j=1}$. As $L_1^2\hookrightarrow L^2,$ we obtain
 $$1=\int_M | u_{j}|^2_{H_0}\frac{\omega^n}{n!}\rightarrow \int_M | u_{\infty}|^2_{H_0}\frac{\omega^n}{n!}, \      \   as  \    \ j \rightarrow +\infty$$
This yields that $\| u_{\infty}\|_{L^2}=1$ and $u_{\infty}$ is non-trivial.
Applying inequality \eqref{seq3} and adapting the argument in \cite[Lemma 5.4]{SIM}, we further deduce
\begin{equation} \label{seq4}
-4 \int_M \textmd{tr}(\Phi(H_0)u_{\infty})\frac{\omega^n}{n!}+\int_{M}\langle \chi(u_{\infty})(Du_{\infty}),Du_{\infty}\rangle_{H_0}\frac{\omega^n}{n!}\leq 0.
\end{equation}

$\bullet$ $\textbf{Step 2}$ \ \ We will construct a $D$-invariant sub-bundle that contradicts the simplicity of $(E,D)$.

From \eqref{seq4} and an adaptation of \cite[Lemma 5.5]{SIM}, we deduce that the eigenvalues $\{\lambda_{\alpha}\}_{\alpha=1}^{m}$ of $u_{\infty}$ are constant almost everywhere. Let $\lambda_1<\lambda_2<\cdots<\lambda_m$ represent the distinct eigenvalues of $u_{\infty}$. Since $\textmd{tr}(u_{\infty})=0$ and $\|u_{\infty}\|_{L^2}=1$, we obtain $2\leq m\leq r$. For each $\lambda_{\alpha} (1\leq \alpha\leq m-1)$, we define a function $P_{\alpha}: \mathbb{R}\rightarrow \mathbb{R}$ such that
$$
P_{\alpha}=
\begin{cases}
1,\ \ \ x\leq \lambda_{\alpha};\\
0,\ \ \ x\geq \lambda_{\alpha+1}.
\end{cases}
$$
Set $\pi_{\alpha}=P_{\alpha}(u_{\infty})$. By \cite[p.887]{SIM}, we obtain

 $(i) \pi_{\alpha}\in L^2_1;$

 $(ii)\pi_{\alpha}^2=\pi_{\alpha}=\pi_{\alpha}^{*_{H_{0}}};$

 $(iii) (\textmd{Id}_E-\pi_{\alpha})D\pi_{\alpha}=0.$

Following \cite{ppczhxz}, each $E_{\alpha}=\pi_{\alpha}(E)$ defines a $D$-invariant sub-bundle of $E$.
Thus, the projections $\{\pi_{\alpha}\}_{\alpha=1}^{m-1}$ define $m-1$ $D$-invariant sub-bundles of $E$. However, this contradicts the simplicity of $(E, D)$.

Furthermore, given that $\psi_{{H_{0}}} \in L^2$, it follows that $\psi_H \in L^2$ via the following formula: $$\psi_H=h^{-1}\circ\psi_{{H_{0}}}\circ h-\frac{1}{2}h^{-1}\circ Dh.$$

Conversely, we will demonstrate that the existence of Hermitian-Poisson metrics implies the semi-simplicity of $E$. To see this, suppose that $(E, D)$ is not simple, and let $S$ be a $D$-invariant sub-bundle. Then there exists an exact sequence
\begin{equation}
0\rightarrow S\rightarrow E \rightarrow Q \rightarrow 0.
\end{equation}
Note that $D_{S}$ and $D_{Q}$  are the connections on $S$ and $Q$ induced by $D$.
Any Hermitian metric $H$ on $E$ induces a smooth isomorphism $f_{H}:S\oplus Q\rightarrow E$. Let
\begin{equation}
    f^{*}_{H}(D)=\left(\begin{split}
    &D_{S}\ &\beta\\
    &0\ &D_{Q}
    \end{split}\right),
    \ \ \ \ f^{*}_{H}(H)=\left(\begin{split}
    &H_{S}\ &0\\
    &0\ &H_{Q}
    \end{split}\right),
\end{equation}
where $\beta\in \Omega^{1}(M,Q^{*}\otimes S)$, $H_{S}$ and $H_{Q}$ are the metrics on $S$ and $Q$ induced by $H$. Then, we have
\begin{equation}
    f^{*}_{H}(D_{H})=\left(\begin{split}
    &D_{S,H_{S}}\ &\frac{1}{2}\beta\\
    &-\frac{1}{2}\beta^{*}\ &D_{Q,H_{Q}}
    \end{split}\right),\ \ \ \ f^{*}_{H}(\psi_{H})=\left(\begin{split}
    &\psi_{S,H_{S}}\ &\frac{1}{2}\beta\\
    &\frac{1}{2}\beta^{*}\ &\psi_{Q,H_{Q}}
    \end{split}\right),
\end{equation}
\begin{equation}
    f^{*}_{H}(D^{''}_{H})=f^{*}_{H}(D^{0,1}_{H})+f^{*}_{H}(\psi^{1,0}_{H})=
    \left(\begin{split}
    &D^{''}_{S,H_{S}}\ &\frac{1}{2}\beta\\
    &\frac{1}{2}(\beta^{c})^{*}\ &D^{''}_{Q,H_{Q}}
    \end{split}\right),
\end{equation}
and
\begin{align}
   f^{*}_{H}(G_{H}) &= f^{*}_{H}(D^{''}_{H}) \circ f^{*}_{H}(D^{''}_{H}) \nonumber \\
    &= \begin{pmatrix}
        G_{S,H_{S}} + \frac{1}{4}\beta \wedge (\beta^{c})^{*} & \frac{1}{2}D^{''}_{S,H_{S}} \circ \beta + \frac{1}{2}\beta \circ D^{''}_{Q,H_{Q}} \\
        \frac{1}{2}(\beta^{c})^{*} \circ D^{''}_{S,H_{S}} + \frac{1}{2}D^{''}_{Q,H_{Q}} \circ (\beta^{c})^{*} & G_{Q,H_{Q}} + \frac{1}{4}(\beta^{c})^{*} \wedge \beta
    \end{pmatrix},
\end{align}
where $\beta^{c}=\beta^{0,1}-\beta^{1,0}$, $\beta^{*}$ is the adjoint of $\beta$ with respect to the metrics $H_{S}$ and $H_{Q}$.
Since $H$ is a Hermitian-Poisson metric, we have
\begin{equation}\label{eewwww23}
\lambda_{H,\omega}\cdot\mathrm{Id}_S=\sqrt{-1}\Lambda_{\omega} G_{S,H_{S}}+\frac{1}{4}\sqrt{-1}\Lambda_{\omega}\beta\wedge(\beta^{c})^{*}.
\end{equation}
Suppose $\omega$ is a balanced metric and $\psi_{H} \in L^2$. Then we can derive
\begin{equation}
\begin{split}
\int_{M} \sqrt{-1}\tr (\Lambda_{\omega} G_{H})\frac{\omega^{n}}{n!}&=\int_{M} \sqrt{-1} \bar{\partial} \tr(\psi_{H}^{1,0})\wedge\frac{\omega^{n-1}}{(n-1)!}\\
&=\int_{M} \sqrt{-1} \bar{\partial} \{\tr(\psi_{H}^{1,0})\wedge\frac{\omega^{n-1}}{(n-1)!}\}+\int_{M} \sqrt{-1} \tr\psi_{H}^{1,0}\wedge \frac{\bar{\partial}\omega^{n-1}}{(n-1)!}\\
&=0
\end{split}
\end{equation}
and
\begin{equation}
\int_{M} \sqrt{-1}\tr (\Lambda_{\omega} G_{S,H_{S}})\frac{\omega^{n}}{n!}=0.
\end{equation}
By taking the trace and integrating both sides of \eqref{eewwww23}, we obtain
\begin{equation}\label{eee}
\frac{1}{4}\int_{M}|\beta|^{2}\frac{\omega^{n}}{n!}=0.
\end{equation}
Thus, we have
$$\beta=0.$$
This implies that $Q$ is a $D$-invariant sub-bundle of $E$,\ i.e., $(E,D)\simeq (S,D_{S})\oplus(Q,D_{Q})$. Consequently, we conclude that $(E,D)$ is semi-simple by induction.

\hfill $\Box$ \\

\vskip 1 true cm

\bigskip
\bigskip
\bigskip

\noindent {\footnotesize {\
Jie Geng, School of Mathematics and Statistics, Nanjing University of Science and Technology, Nanjing 210094, People's Republic of China } \\
{\it Email address: gengjie1120@njust.edu.cn  }\\

\noindent {\footnotesize {\
Zhenghan Shen, School of Mathematics and Statistics, Nanjing University of Science and Technology, Nanjing 210094, People's Republic of China } \\
{\it Email address: mathszh@njust.edu.cn  }\\

\noindent {\footnotesize {\
Xi Zhang, School of Mathematics and Statistics, Nanjing University of Science and Technology, Nanjing 210094, People's Republic of China } \\
{\it Email address: mathzx@njust.edu.cn  }\\

\vskip 0.5 true cm

\end{document}